\definecolor{mediumblue}{rgb}{0.0, 0.0, 0.8}
\colorlet{darkgreen}{green!50!black}
\renewcommand{\trianglerighteq}{\trianglerighteqslant}
\renewcommand{\trianglelefteq}{\trianglelefteqslant}
\tikzset{wei/.style=%{red,double=pink,thick,doubledistance=1.5pt}}
{red,double=red,double
distance=1pt}}
\tikzset{wei2/.style={red,double=red,double
distance=1pt}}
\numberwithin{equation}{section}
\newtheorem{thm}{Theorem}[section]
\newtheorem{lem}[thm]{Lemma}
\newtheorem{prop}[thm]{Proposition}
\newtheorem*{prop*}{Proposition}
\newtheorem*{thmA*}{Theorem A}
\newtheorem*{thmB*}{Theorem B}
\newtheorem*{thmC*}{Theorem C}
\newtheorem*{cor*}{Corollary}
\newtheorem*{conj*}{Semisimplicity  Conjecture}
\theoremstyle{remark}
\newtheorem{rmk}[thm]{Remark}  %numbering?
\newtheorem*{notation}{Notation}
\newtheorem*{ackn}{Acknowledgements}
\newtheorem*{conv}{Convention}
\theoremstyle{definition}
\newtheorem{defin}[thm]{Definition}
\newtheorem{eg}[thm]{Example}
\newcommand{\rad}{\mathrm{rad}}
\newcommand{\res}{\mathrm{res}}
\newcommand{\Std}{{\sf Std}}
\newcommand{\Shape}{\operatorname{Shape}}
\newcommand{\Bip}{\mathsf{Bip}}
\newcommand{\Path}{{\rm Path}}
\newcommand{\la}{\lambda}
\newcommand{\lapr}{\lambda^{\prime}}
 \renewcommand{\SS}{\mathsf{S}}
    \newcommand{\lxr}{\longrightarrow}
  \newcommand{\ct}{\mathsf{ct}}  %semistandard index
\newcommand{\TT}{\mathsf{T}}
\newcommand{\ui}{\underline{i}}
\newcommand{\tT}{\mathsf{t}}
\newcommand{\blob}{\mathsf{B}_{d}^{\kappa}}
\renewcommand{\epsilon}{\varepsilon}
\newcommand{\Image}{\mathrm{Im}}
\newcommand{\sS}{\mathsf{s}}
\DeclareMathOperator{\Hom}{Hom}
 \newcommand\releven{11}
 \newcommand\rten{10} \newcommand\rtwelve{12}
\tikzset{
    ultra thin/.style= {line width=0.05pt},
    very thin/.style=  {line width=0.2pt},
    thin/.style=       {line width=0.1pt},
    semithick/.style=  {line width=0.6pt},
    thick/.style=      {line width=0.8pt},
    very thick/.style= {line width=1.2pt},
    ultra thick/.style={line width=1.6pt}
}
\crefname{defn}{Definition}{Definitions}
\crefname{thm}{Theorem}{Theorems}
\crefname{prop}{Proposition}{Propositions}
\crefname{lemma}{Lemma}{Lemmas}
\crefname{cor}{Corollary}{Corollaries}
\crefname{conj}{Conjecture}{Conjectures}
\crefname{section}{Section}{Sections}
\crefname{subsection}{Subsection}{Subsections}
\crefname{eg}{Example}{Examples}
\crefname{figure}{Figure}{Figures}
\crefname{remark}{Remark}{Remarks}
\crefname{remark}{Remark}{Remarks}
\crefname{equation}{equation}{equation}
\Crefname{defn}{Definition}{Definitions}
\Crefname{thm}{Theorem}{Theorems}
\Crefname{prop}{Proposition}{Propositions}
\Crefname{lemma}{Lemma}{Lemmas}
\Crefname{cor}{Corollary}{Corollaries}
\Crefname{conj}{Conjecture}{Conjectures}
\Crefname{section}{Section}{Sections}
\Crefname{subsection}{Subsection}{Subsections}
\Crefname{eg}{Example}{Examples}
\Crefname{figure}{Figure}{Figures}
\Crefname{remark}{Remark}{Remarks}
\Crefname{remark}{Remark}{Remarks}
 \newlength{\mylen}
\newcommand\wht{\Yfillcolour{white}}
\newcommand\blue{\Yfillcolour{blue!30}}
\newcommand{\ten}{10}
\newcommand\Item[1][]{%
  \ifx\relax#1\relax  \item \else \item[#1] \fi
  \abovedisplayskip=0pt\abovedisplayshortskip=0pt~\vspace*{-\baselineskip}}
\def\Item{\item\abovedisplayskip=0pt\abovedisplayshortskip=5pt~\vspace*{-\baselineskip}}
\newcommand\blfootnote[1]{%
  \begingroup
  \renewcommand\thefootnote{}\footnote{#1}%
  \addtocounter{footnote}{-1}%
  \endgroup
}
\begin{document}

\title[Bases of simple modules]{Bases and BGG resolutions  of simple modules \\ of Temperley--Lieb algebras of type B}
\author[D. Michailidis]{Dimitris Michailidis}
\address{School of Mathematics, Statistics and Actuarial Science, University of Kent, CT2 7FS, UK.}
\email{\texttt{dm563@kent.ac.uk}}
\blfootnote{ {\it 2010 Mathematics Subject Classification}.
    Primary: 81R10, 05E10; Secondary: 20C08.}

\maketitle

\begin{abstract}
We construct explicit bases of simple modules and Bernstein--Gelfand--Gelfand (BGG) resolutions of all simple modules of the (graded) Temperley--Lieb algebra of type B over a field of characteristic zero. 
\end{abstract}

\section*{Introduction}

Inspired by the study of certain models in physics, Martin and Saleur introduced the main hero of this paper, the \textit{Temperley-Lieb algebra of type B} or \textit{blob algebra}, as the diagrammatic two parameter generalisation of the Temperley-Lieb algebra of type A \cite{MaSa}. The blob algebra deals with boundary conditions of the Potts model which arises in statistical mechanics. Despite having its origins in physics, the applications of the blob algebra are vast throughout pure mathematics. It brings together algebra and geometry and more recently categorical, diagrammatic and knot theoretic ideas. The blob algebra controls a portion of the representation theory of the Kac-Moody quantum algebra $U_{q}(\hat{\mathfrak{sl}}_2)$ as Martin and Ryom-Hansen \cite{MaRH} established via Ringel duality. More recently Iohara--Lehrer--Zhang \cite{IoLeZh} connected the Temperley-Lieb algebra of type B and the quantum algebra $U_{q}(\hat{\mathfrak{sl}}_2)$ via Schur--Weyl duality. Vaughan Jones used the Temperley-Lieb of type A to introduce the first knot invariant to distinguish between left and right trefoils, the well known \textit{Jones' polynomials}. These polynomials were introduced by studying the Temperley--Lieb algebras. 

Apart from abundance of applications, the blob algebra has a very fruitful representation theory and it is of great importance and interest itself. The blob algebra is a quotient of the Hecke algebra of type B, hence it controls portion of the representation theory of the affine Hecke algebras. The decomposition numbers for the ungraded blob algebra were determined by Martin and Woodcock \cite{MaWo2} and Ryom-Hansen \cite{RH}. However the fact that the blob algebra is graded caused a blast in its study. In their pioneering work, Brundan and Kleshchev \cite{BrKl} have proven that the cyclotomic Hecke algebras are isomorphic to the Khovanov-Lauda-Rouquier (KLR) algebras. The blob algebra $\blob$, $\kappa = (\kappa_1,\kappa_2)$ a bicharge, is isomorphic to a quotient of the KLR algebra of level 2. In particular it is isomorphic to the algebra with generators  
\[
\{\mathsf{e}(\ui) \ | \ \ui = (i_1,\cdots,i_{d})\}\cup\{\psi_{1},\cdots,\psi_{d-1}\}\cup\{y_{1},\cdots,y_{d}\}
\]
subject to KLR relations and two additional \textit{blob} relations. In particular the blob relations are the following
\[
y_{1}\mathsf{e}(\ui) = 0, \ \text{if} \ i_1 = \kappa_1,\kappa_2 \ \ \text{and} \ \ \mathsf{e}(\ui) = 0,  \ \text{if} \ i_2 = i_1 + 1.
\]
Plaza and Ryom--Hansen \cite{PlaRH} proved that the blob algebra admits a $\mathbb{Z}$-grading and also constructed a graded cellular basis. Namely the blob algebra $\blob$ is a graded cellular algebra with graded cellular basis indexed by standard tableaux of one-column bipartitions of $d$. The graded decomposition numbers for the blob algebra were computed by Plaza \cite{Pla}. Moreover the blob algebra is quasi-hereditary and in that setting Hazi, Martin and Parker \cite{HaMaPa} determined the structure of the indecomposable tilting modules using the graded structure.   

Having their origins in Lie theory, \textit{alcove geometries} play an important role in the understanding of the representation theory of Hecke and KLR algebras. Martin and Woodcock \cite{MaWo1} were the pioneers of using such techniques in the study of the blob algebra. Roughly speaking the action of the affine Weyl group of type $\hat{A}_1$ divides the 2-dimensional Euclidean space into connected components, the \textit{alcoves}, separated by \textit{hyperplanes} or \textit{walls} which we index by half integers. The alcoves are indexed by integers and the alcove $\mathfrak{a}_{m}$ is between the hyperplanes $H_{m-1/2}$, $H_{m+1/2}$. Each one-column bipartition $\la$ can be embedded in the Euclidean space and there is a natural bijection between standard $\la$-tableaux and paths in the Euclidean space terminating at the point $\la$. 

\begin{thmA*}
Let $F$ be a field of characteristic zero and $\la$ be an one-column bipartition of $d$ . 
\begin{enumerate}
\item If $\la\in\mathfrak{a}_m$, $m \le 0$, we have that
\[
L(\la) = \mathrm{span}_{F}\left\{\psi_{\TT} \ {\scalefont{1.3} | \ \substack{\TT \ \text{does not intersect} \ H_{1/2} \ \text{and} \\ \text{does not last intersect} \ H_{m-1/2}}}\right\}.
\]
\item If $\la\in H_{m-1/2}$, $m\le 0$, we have that
\[
L(\la) = \mathrm{span}_{F}\{\psi_{\TT} \ | \ \TT \ \text{does not intersect} \ H_{1/2}\}.
\]
\end{enumerate}
\end{thmA*}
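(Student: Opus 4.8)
Recall from \cite{PlaRH} that $\blob$ is a graded cellular algebra whose cell modules $\Delta(\la)$, indexed by one-column bipartitions $\la$ of $d$, carry a homogeneous basis $\{\psi_\TT : \TT\in\Std(\la)\}$ and a homogeneous symmetric form $\langle\,,\,\rangle_\la$; since $\blob$ is quasi-hereditary, $L(\la)=\Delta(\la)/\rad\langle\,,\,\rangle_\la\neq 0$ for every $\la$. Writing $\mathcal A_\la\subseteq\Std(\la)$ for the set of tableaux in the statement, the theorem says that the images of $\{\psi_\TT:\TT\in\mathcal A_\la\}$ in $L(\la)$ form a basis, and I would prove this by establishing separately that (I) they span $L(\la)$ and (II) their graded cardinality equals $\dim_t L(\la)$. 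Both parts are organised as a downward induction along the block of $\la$: by Plaza's graded decomposition numbers \cite{Pla} every $\Delta(\la)$ has at most two composition factors --- $L(\la)$, and a single $L(\la^\flat)$ (with $\la^\flat\gdom\la$ the reflection of $\la$ in the appropriate wall bounding $\mathfrak a_m$) when $\la$ is not maximal --- so the base case is a maximal weight, where $\Delta(\la)=L(\la)$.

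For (I), the alcove/path description of $\Std(\la)$ makes the submodule lattice of $\Delta(\la)$ combinatorial. Let $R_\la=\operatorname{span}_F\{\psi_\TT:\TT\notin\mathcal A_\la\}$. I would show $R_\la$ is an $\blob$-submodule by checking, from the Plaza--Ryom-Hansen formulas for the action of $\mathsf e(\ui),\psi_i,y_i$ on the $\psi$-basis, that a generator applied to a ``bad'' $\psi_\TT$ is, after cellular straightening, a combination of $\psi$'s indexed by bad tableaux: concretely, the local moves performed by the $\psi_i$ keep a path that meets $H_{1/2}$, or whose last wall-contact is $H_{m-1/2}$, within the set of such paths, and the two blob relations $y_1\mathsf e(\ui)=0$ and $\mathsf e(\ui)=0$ for $i_2=i_1+1$ kill exactly the boundary terms that would push a path off the dominant side of $H_{1/2}$. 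Since $R_\la$ is a proper submodule (the initial tableau $\stt^\la$ belongs to $\mathcal A_\la$ when $m\le 0$), $R_\la\subseteq\rad\langle\,,\,\rangle_\la$, so the images of $\{\psi_\TT:\TT\in\mathcal A_\la\}$ already span $L(\la)$. Here the two cases of the statement are distinguished geometrically: if $\la\in H_{m-1/2}$ a path cannot re-enter $\mathfrak a_m$, so the only bad paths meet $H_{1/2}$; if $\la\in\mathfrak a_m$ there is in addition the family of paths avoiding $H_{1/2}$ with final wall-contact $H_{m-1/2}$, accounting for the larger radical coming from $L(\la^\flat)$.

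For (II), I would compute $\dim_t L(\la)$ by inverting Plaza's $d_{\la\mu}(t)$; as each $\Delta(\la)$ has at most two factors and the $d_{\la\mu}(t)$ are monomials, this reduces to $\dim_t\Delta(\la)=\dim_t L(\la)+t^{\delta_\la}\dim_t L(\la^\flat)$, so by induction it suffices to prove the combinatorial identity
\[
\sum_{\TT\in\Std(\la)\setminus\mathcal A_\la} t^{\deg\TT}\;=\;t^{\delta_\la}\!\!\sum_{\SS\in\mathcal A_{\la^\flat}} t^{\deg\SS},
\]
which I would obtain from a degree-shifting bijection between bad $\la$-paths and good $\la^\flat$-paths --- cut a bad path at its last contact with the relevant wall and reflect the remaining segment --- the shift $\delta_\la$ being read off from the alcove-geometric formula for $\deg\TT$. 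Alternatively one can sidestep the decomposition numbers: once (I) holds it is enough to show that the Gram matrix of $\langle\,,\,\rangle_\la$ on $\{\psi_\TT:\TT\in\mathcal A_\la\}$ is nondegenerate --- for instance unitriangular with respect to a refinement of the path order --- which gives linear independence of the images in $L(\la)$ directly, and then $R_\la=\rad\langle\,,\,\rangle_\la$ follows as well.

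I expect the crux to be part (I): showing, from the explicit generator action on the $\psi$-basis, that $R_\la$ is closed and that straightening never creates a good tableau requires a careful case analysis of the $\psi_i$-moves near the walls $H_{1/2}$ and $H_{m-1/2}$, and it is precisely there --- rather than in the degree bookkeeping of (II) --- that the two blob relations must be used in an essential way.
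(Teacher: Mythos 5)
Your part (I) is sound in outline and is essentially what the paper does, just packaged differently: the paper realises your $R_\la$ as $\Image\varphi_{\la'}^{\la}+\Image\varphi_{\la''}^{\la}$ for the two explicitly constructed one-column homomorphisms from the adjacent less dominant cell modules (built from the Garnir presentation of $\Delta(\la)$), so closure under the algebra action comes for free once the spanning sets of the images are identified; your direct case analysis of the generator action near the walls would have to reproduce exactly those computations, so no objection there beyond the fact that this is where all the work lies.

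The genuine gap is in part (II). Your reduction rests on the claim that every $\Delta(\la)$ has at most two composition factors, $L(\la)$ and a single $L(\la^\flat)$. This contradicts Plaza's graded decomposition numbers as used in the paper: $[\Delta(\mu):L(\la)]_t=t^{|\ell(\la)|-|\ell(\mu)|}$ for \emph{every} linked $\la\trianglelefteq\mu$, so $\Delta(\mu)$ has one composition factor for each bipartition of its linkage class that is less dominant than $\mu$ — in the paper's $d=9$ example this is already four factors for the module in the fundamental alcove, and it is also why the BGG resolution of Theorem B has length equal to the size of that set rather than being a short exact sequence. Consequently the identity $\dim_t\Delta(\la)=\dim_t L(\la)+t^{\delta_\la}\dim_t L(\la^\flat)$ is false in general (the radical has graded dimension $\sum_{\nu\triangleleft\la}t^{|\ell(\nu)|-|\ell(\la)|}\dim_t L(\nu)$, and in the alcove case it is the non-direct sum of \emph{two} shifted cell modules), so the proposed ``cut at the last wall contact and reflect'' bijection cannot give the right count. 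Your fallback — unitriangularity of the Gram matrix on the good tableaux — also does not work as stated: the form is homogeneous of degree zero, so $\langle\psi_\TT,\psi_\TT\rangle=0$ whenever $\deg\TT\neq 0$, and no argument for nondegeneracy on this set is offered. The paper's way around the counting problem avoids induction entirely: over a field of characteristic zero all composition factors of $\Delta(\la)$ other than $L(\la)$ occur with strictly positive degree shift, and $\dim_t L(\la)$ is bar-invariant, so $L(\la)$ is the largest quotient of $\Delta(\la)$ with bar-invariant character carried by a module; one then checks that the span of the good $\psi_\TT$ has bar-invariant character by exhibiting, for each good path $\TT$, a good path $\overline{\TT}$ with $\deg\overline{\TT}=-\deg\TT$, while the submodule quotiented out is an image of degree-one homomorphisms and hence contributes only positively shifted constituents. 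To repair your argument you would need either to replace the single-reflection bijection by one matching the bad $\la$-paths with the disjoint union of good $\nu$-paths over all $\nu\triangleleft\la$ (with the correct shifts), or to adopt the bar-invariance argument above.
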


\begin{figure}[h]
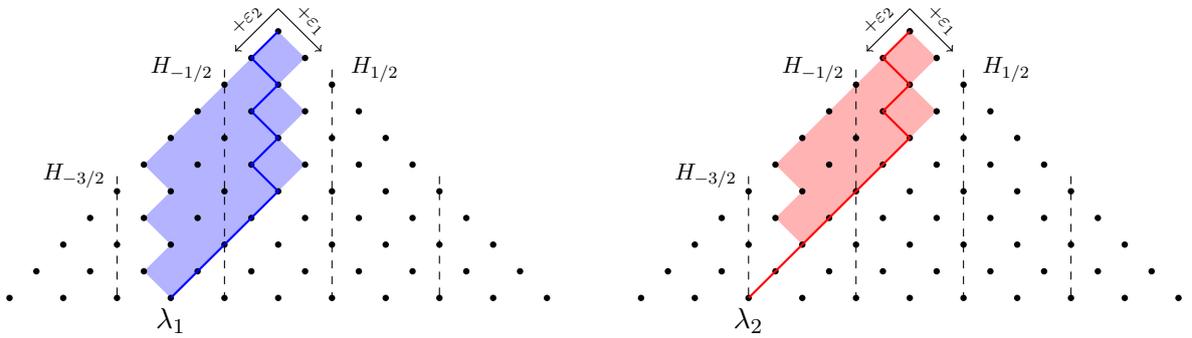

\[
% [inline block 0: 2 envs, 9388 chars -> data_tex | \begin{tikzpicture} ...]

 \]
 \vspace{-0.6cm}
  \caption{Here $\la_1 = ((1^{3}),(1^{7}))$, $\la_2 = ((1^2),(1^8))\in\Bip_{1}(10)$ are one-column bipartitions in an alcove and on a hyperplane respectively. The paths indexing basis elements for the simples $L(\la_{1})$ and $L(\la_{2})$ are in the shaded areas. The paths drawn are those corresponding to the initial tableaux $\mathsf{t}^{\la_{1}}$ and $\mathsf{t}^{\la_{2}}$ respectively (see Definition \ref{initial tableau}).}
   \label{intro eg}
\end{figure}

If $m > 0$ the result is similar up to relabelling hyperplanes and it is given in detail into the paper. In order to approach the problem we use the graded cellular structure of the blob algebra known from \cite{PlaRH}. We also develop representation theoretic methods by constructing a presentation for the cell modules $\Delta(\la)$. In more detail we identify \textit{Garnir relations} which are sufficient to describe the cell module.

In 1975 \cite{BGG} Bernstein--Gelfand--Gelfand constructed resolutions of simple modules by Verma modules in the context of finite-dimensional Lie algebras. Those resolutions, known as BGG resolutions, have applications in many areas of mathematics. In the study of the Laplacian space \cite{East}, complex representation theory of Kac-Moody algebras \cite{GaLe},  algebraic geometry \cite{EnHu}... In the context of modular representation theory of the symmetric group and Hecke algebras, BGG resolutions were first used by Bowman, Norton and Simental \cite{BoNoSi}. They utilised resolutions of Specht modules in order to provide homological construction of unitary simple modules of Cherednik and Hecke algebras of type A. They used these results in order to calculate Betti numbers and Castelnuovo--Mumford regularity of symmetric linear subspaces and this is another application of BGG resolution in algebraic geometry. 

In this paper we generalise \cite{BoNoSi} by showing that all simple modules of the blob algebra admit BGG resolutions. In particular we construct resolutions of cell modules for each simple  $\blob$-module indexed by a bipartition which belongs to an alcove. Simple modules indexed by one-column bipartitions which belong to hyperplanes have much easier BGG resolutions and they are used in the proof of the second main theorem of this paper which is the following.

\begin{thmB*}
Let $F$ be a field of characteristic zero and $\la$ be an one-column bipartition which belongs to an alcove and 
\[
C_{\bullet}(\la) := \bigoplus_{\nu\trianglelefteq\la} \Delta(\nu)\langle |\ell(\nu)| - |\ell(\la)| \rangle.
\] 
The complex
\[
\xymatrix{0 \ar[r] & C_{\bullet}(\la)  \ar[r] & L(\la) \ar[r] & 0
}
\]
with differentials given by one-column homomorphisms is a BGG resolutions for the simple module $L(\la)$. In other words 
\[
H_{i}(C_{\bullet}(\la)) =
\begin{cases}
L(\la), \ \text{if} \ i = 0 \\
0, \hspace{0.7cm} \text{otherwise}.
\end{cases}
\]
\end{thmB*}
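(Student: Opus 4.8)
The plan is to build the complex $C_\bullet(\la)$ by exhibiting the one-column homomorphisms $\Delta(\nu) \to \Delta(\mu)$ explicitly whenever $\mu \trianglelefteq \nu \trianglelefteq \la$ with $\nu$ obtained from $\mu$ by a single reflection in one of the walls $H_{k-1/2}$, and to organise these maps so that consecutive composites vanish. The first step is to fix, for each $\nu \trianglelefteq \la$, the homological degree to be $(|\ell(\la)| - |\ell(\nu)|)/1$ read off from the alcove geometry: since $\la$ lies in an alcove $\mathfrak{a}_m$, the bipartitions $\nu$ dominated by $\la$ that survive in the BGG complex are exactly the reflections of $\la$ through increasing chains of walls $H_{1/2}, H_{-1/2}, \dots$, so the poset $\{\nu \trianglelefteq \la\}$ appearing here is a finite interval that looks like a ``linear'' alcove pattern (in the type $\hat A_1$ geometry the Bruhat-like order below a regular weight is a chain of ``diamonds''), and I would record its Hasse diagram together with the degree shifts $\langle |\ell(\nu)| - |\ell(\la)| \rangle$. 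The differential $\partial_i \colon C_i \to C_{i-1}$ is then the sum, with signs chosen exactly as in the classical BGG complex (a ``square anticommutes'' sign rule coming from the two-dimensional faces of the geometry), of the one-column homomorphisms between the relevant cell modules.

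The second step is to prove that each required one-column homomorphism $\Delta(\nu) \to \Delta(\mu)$ exists and is unique up to scalar, and that it is injective. Existence and injectivity I would get from the Garnir presentation of $\Delta(\la)$ developed earlier in the paper: a homomorphism out of $\Delta(\nu)$ is determined by the image of the generator, which must be a highest-weight-type vector annihilated by the Garnir relations of $\Delta(\nu)$, and one checks that the ``one-column'' vector in $\Delta(\mu)$ (the one supported on tableaux whose path reflects at the appropriate wall) satisfies precisely those relations; injectivity follows because $\Delta(\mu)$ is a cell module with simple head $L(\mu) \ne L(\nu)$ and cellular-form considerations force a nonzero cell-module map with this source to be injective (alternatively, use that in characteristic zero the blob algebra's cell modules have a BGG-type filtration controlled by $\hat{\mathfrak{sl}}_2$, so $\Hom$-spaces between cell modules are at most one-dimensional). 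The $\partial^2 = 0$ check reduces, by bilinearity, to the two-dimensional faces: around each ``diamond'' $\mu \leftarrow \nu_1, \nu_2 \leftarrow \lambda'$ one must show the two composites $\Delta(\lambda') \to \Delta(\nu_i) \to \Delta(\mu)$ agree up to the sign, and this is a scalar identity that I would pin down by evaluating both composites on the cyclic generator and comparing leading terms of the resulting element of $\Delta(\mu)$ in the cellular basis.

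The third and decisive step is exactness. The cleanest route is to compute Euler characteristics in the Grothendieck group: since each $\partial_i$ is injective on a complement is not automatic, instead I would argue that $\bigoplus_i (-1)^i [\Delta(\nu_i)\langle \cdot \rangle] = [L(\la)]$ in the graded Grothendieck group, which is exactly the inversion of the graded decomposition-number matrix for the blob algebra computed by Plaza---in the $\hat A_1$ alcove geometry the decomposition numbers are $0$ or $1$ and the inverse matrix has entries $0, \pm1$ distributed precisely along the interval $\{\nu \trianglelefteq \la\}$ with the stated degree shifts. This gives that $\sum_i (-1)^i [H_i(C_\bullet(\la))] = [L(\la)]$. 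To upgrade this to $H_i = 0$ for $i>0$ and $H_0 = L(\la)$, I would induct on the dominance order (equivalently on $|\ell(\la)|$, or on the number of walls between $\la$ and the lowest alcove): truncating the complex at its top term $\Delta(\la)$ in degree $0$, the remaining subcomplex is built from cell modules $\Delta(\nu)$ with $\nu \vartriangleleft \la$, and using the one-column homomorphism out of $\Delta(\la)$ together with the surjection $\Delta(\la) \twoheadrightarrow L(\la)$ I can splice in the (inductively known) resolution of each $L(\nu)$ appearing in the radical filtration of $\Delta(\la)$; a diagram chase, or the horseshoe lemma applied to the short exact sequences $0 \to \rad\Delta(\la) \to \Delta(\la) \to L(\la) \to 0$ and $0 \to (\text{next layer}) \to \rad\Delta(\la) \to \bigoplus L(\nu) \to 0$, then forces acyclicity in positive degrees. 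The main obstacle I anticipate is precisely this exactness step: verifying that the image of $\partial_{i+1}$ is \emph{all} of $\ker \partial_i$ and not merely of the right graded dimension. I would handle it by a careful comparison with the simpler hyperplane case of Theorem~B(2)/the ``hyperplane'' simples (whose BGG resolutions the introduction says are easy and are used here), realising $C_\bullet(\la)$ as a mapping cone or a tensor/induction of the short hyperplane resolutions, so that acyclicity propagates from the known easy case; the bookkeeping of degree shifts $\langle |\ell(\nu)| - |\ell(\la)| \rangle$ through these cone constructions is the delicate part and will need the explicit geometry of Figure~\ref{intro eg}.
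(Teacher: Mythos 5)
Your construction of the complex and the verification that consecutive differentials vanish track the paper closely: the signs on the two-dimensional ``diamonds'' and the scalar comparison of the two composites $\Delta(\alpha)\to\Delta(\nu_i)\to\Delta(\mu)$ evaluated on the cyclic generator is exactly what Proposition \ref{comp of one column homs} does, and the existence of the one-column maps via the Garnir presentation is the content of Section \ref{Section 3}. (One caveat: injectivity of these maps is \emph{not} a formal consequence of cellularity --- a nonzero map between cell modules need not be injective --- and in the paper it is obtained by the explicit computations of Propositions \ref{image of lapr} and \ref{image of la}; fortunately injectivity of the differentials is not what the theorem needs.)

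The genuine gap is in your exactness step, which is also where you diverge from the paper. The Grothendieck-group identity obtained by inverting Plaza's decomposition matrix only pins down the alternating sum $\sum_i(-1)^i[H_i(C_\bullet(\la))]$, as you note. Your proposed upgrade --- induct on the distance of $\la$ from the outermost alcove and splice, via the horseshoe lemma applied to the filtration of $\rad\Delta(\la)$, the inductively known resolutions of the composition factors $L(\nu)$, $\nu\vartriangleleft\la$ --- does not close this gap: since every $L(\nu)$ with $\nu\vartriangleleft\la$ occurs in $\rad\Delta(\la)$ and each of their resolutions again contains $\Delta(\mu)$ for all $\mu\trianglelefteq\nu$, the spliced resolution has terms which are large direct sums with high multiplicities, not the two summands $\Delta(\nu_i)\oplus\Delta(\nu_i')$ of $C_i(\la)$, and its differentials are not identified with the one-column homomorphisms. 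So at best you produce \emph{some} resolution of $L(\la)$ by cell modules; concluding that the \emph{given} complex is exact would still require a minimality/uniqueness or explicit homotopy argument that is missing, and the suggested mapping-cone comparison with the hyperplane case is not spelled out. The paper avoids this entirely by a different induction: it inducts on $d$ using the exact functors $r\text{-}\mathrm{res}_{d-1}^{d}$ and analyses where the bipartition $\mathsf{E}_r(\la)$ lands. If $\mathsf{E}_r(\la)$ lies on the far wall $H_{n-1/2}$, the cell modules in the restricted complex pair up and the restricted differentials contain identity maps, so all homology of $r\text{-}\mathrm{res}\,C_\bullet(\la)$ vanishes; if it lies on the near wall $H_{n+1/2}$, the restriction collapses to the two-term hyperplane resolution of Proposition \ref{BGG resolution for the wall case}; if it stays in the alcove, the restriction \emph{is} $C_\bullet(\mathsf{E}_r(\la))$ and the inductive hypothesis applies. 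Combining the three cases with the explicit bases of Theorem \ref{bases of simple modules} (to count $\mathrm{res}_{d-1}^{d}L_d(\la)=\bigoplus_r L_{d-1}(\mathsf{E}_r(\la))$ and to use that restriction kills no simple $L_d(\nu)$) yields $H_i=0$ for $i>0$ and $H_0=L(\la)$. If you want to salvage your route, you would need to supply the missing identification step; otherwise the restriction-functor induction is the argument to follow.
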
 

The paper is organised as follows. In sections 1 we introduce combinatorics of bipartitions and tableaux that arise in the representation theories of KLR algebras. We then describe the alcove geometry of type $\hat{A}_1$ which is crucial for the understanding of the structure of the blob algebra and the ideas we use in order to approach our problem. These sections are enriched with many examples and we try to give a geometric interpretation for the majority of the combinatorial and algebraic concepts. In section 2 we give the definition of the blob algebra as a quotient of the KLR algebra and we establish its structure as graded cellular and quasi-hereditary algebra. In this section we also provide a presentation of the cell modules of the blob algebra, in terms of the Garnir relations. In section 3 we establish the basic tool for the construction of the bases of simple modules, namely homomorphisms between cell modules. The construction of those homomorphisms utilises the Garnir-type presentation of the cell modules from section 2. The most important construction of section 3 is the spanning set of the images of the aforementioned homomorphisms. The elements of those spanning sets will certainly belong to the radical of the cell module. Section 4 includes the first of the two main results of this paper. We prove that over a field of characteristic zero the spanning set of the images is a basis for the radical of the cell $\blob$-module, hence we prove Theorem A. In section 5 we provide a homological construction of simple modules via the BGG resolutions and hence prove Theorem B. Again this construction is over a field of characteristic zero. 

\begin{ackn}
The author would like to thank his Ph.D. supervisor Dr C. Bowman for the helpful discussions and suggestions during this project. Moreover, the author would like to thank the referees for the comments and corrections, which contributed a lot towards the improvement of the content and the appearance of the paper.   
\end{ackn}

\renewcommand{\res}{\mathsf{res}}
\section{Combinatorics of tableaux and paths}

\subsection{Partitions and tableaux}
\renewcommand{\TT}{\mathsf{t}}
\renewcommand{\SS}{\mathsf{s}}
\renewcommand{\tT}{\mathsf{T}}
\renewcommand{\sS}{\mathsf{S}}
We fix two positive integers $d > 0$ and $e\in\{2,3,\cdots\}$. Let $\mathfrak{S}_d$ be the symmetric group in $d$ letters, with length function $\mathsf{L}$ and set $I := \mathbb{Z}/e\mathbb{Z}$. As Coxeter group $\mathfrak{S}_{d}$ is generated by the simple transpositions $s_1,\cdots,s_{d-1}$ subject to the relations
\begin{align*}
s_{i}^{2} & = 1 \hspace{1.5cm} \text{for} \ i = 1,\cdots,d-1 \\
s_{i}s_{j} & = s_{j}s_{i} \hspace{1cm} \text{for} \ i = 1\le i < j-1\le d-2 \\
s_{i}s_{i+1}s_{i} & = s_{i+1}s_{i}s_{i+1} \hspace{0.4cm} \text{for} \ i = 1,\cdots,d-2.
\end{align*}
We refer to the last two relations as \textsf{braid relations}. An \textsf{admissible $e$-bicharge} is a pair $\kappa = (\kappa_1,\kappa_2)\in \mathbb{Z}^2$ such that $0<|\kappa_1-\kappa_2|<e$. In this paper we make the convention that $\kappa_{1} < \kappa_{2}$. Note that this does not restrict us on the definition of the algebra, as the blob algebra is defined for any choice of admissible bicharge. This is just a convention about the combinatorics of the algebra. A \textsf{bipartition} of the positive integer $d$ is a pair of partitions $\lambda = (\lambda^{(1)},\lambda^{(2)})$ such that
\[
d = |\lambda^{(1)}| + |\lambda^{(2)}|
\]
and we denote by $\Bip(d)$ the set of bipartitions of $d$. The \textsf{diagram} of the bipartition $\lambda$ is the set
\[
[\lambda] := \{(r,c,m)\in\mathbb{N}\times\mathbb{N}\times\{1,2\} \ | \ 1\le c \le \la_{r}^{(m)}\} 
\]
where $\lambda_{r}^{(m)}$ is the $r^{\mathrm{th}}$ part of the partition $\lambda^{(m)}$, $m = 1,2$. The triples $(r,c,m)$ are called \textsf{nodes} or \textsf{boxes} and by using the usual convention we can think of the diagram as two arrays of boxes in the plane. A \textsf{$\la$-tableau} is a bijection $\TT\colon [\la]\lxr \{1,\cdots, d\}$, we say that the tableau $\TT$ has shape $\la$ and we write $\Shape(\TT) = \la$. We can think a $\la$-tableau as a diagram of $\la$, where the nodes are occupied by the integers $\{1,\cdots,d\}$. We denote by $\TT^{-1}(k)$ the node occupied by the integer $k\in\{1,\cdots,d\}$ and by $\TT(r,c,m)$ the integer occupying the node $(r,c,m)\in[\la]$. A tableau $\TT$ is called \textsf{standard} if the entries increase along rows and down columns in both components. We denote by $\Std(\la)$ the set of standard $\la$-tableaux and set
\[
\Std(d) := \bigcup_{\la\in\Bip(d)}\Std(\la).
\] 

\renewcommand{\Bip}{\mathsf{Bip}_{1}}   

\begin{conv}
Throughout this paper we shall be exclusively interested in one-column bipartitions, that is bipartitions of the form $\la = ((1^{\la_1}),(1^{\la_2}))$. By the notion \textit{bipartition} we shall always refer to one-column bipartitions and we shall denote the set of one-column bipartitions of $d$ by $\Bip(d)$. Moreover the nodes of the diagram of such bipartitions will be of the form $(r,1,m)$.
\end{conv}

\begin{rmk}
Consider the set $\Lambda_d = \{-d,-d+1,\cdots,d-1,d\}$. There is an obvious injective map between $\Lambda_d$ and the set $\Bip(d)$ of bipartitions of $d$, given by 
\[
\Bip(d) \lxr \Lambda_d, \ ((1^{\la_1}),(1^{\la_2}))\longmapsto \la_1 - \la_2.
\]
In other words we can identify each bipartition with an integer in the set $\Lambda_d$. Using the above bijection we freely identify a bipartition $((1^{\la_1}),(1^{\la_2}))$ and the integer $\la_1 - \la_2$.
\end{rmk}

Let $(r,1,m)\in [\la]$ be a node. We define the \textsf{content} of the node $(r,1,m)$ to be 
\[
\ct(r,1,m) := \kappa_m + 1 - r \in\mathbb{Z}
\] 
and the \textsf{residue} of the node $(r,1,m)$ as
\[
\res(r,1,m) := \ct(r,1,m) \ (\textsf{mod} \ e) \in I.
\]
We refer to a node of residue $i\in I$ as \textsf{$i$-node}.

\begin{defin}
Let $\la\in\Bip(d)$ and $\TT$ be a $\la$-tableau. We define the \textsf{residue sequence} of $\TT$ to be the $d$-tuple:
\[
\res(\TT) := (\res(\TT^{-1}(1)),\cdots,\res(\TT^{-1}(d))) \in I^d.
\]
\end{defin}

\begin{defin} \label{order on nodes}
Let $(r,1,m)$ and $(r',1,m')$ be two nodes. We define a \textsf{lexicographic order} and we write $(r,1,m) \preceq (r',1,m')$ if either
\begin{itemize}
\item[(i)] $\ct(r,1,m) < \ct(r',1,m')$ or 
\item[(ii)] $\ct(r,1,m) = \ct(r',1,m')$ and $m > m'$.
%{\color{red}\item[(iii)] $\ct(r,1,m) = \ct(r',1,m')$, {\color{red}$\kappa_{2} < \kappa_{1}$} and $m' > m$.}
\end{itemize} 
If in addition $\mathsf{res}(r,1,m) = \mathsf{res}(r',1,m')$, we write $(r,1,m) \trianglelefteq (r',1,m')$ and we say that $(r,1,m)$ is less dominant than $(r',1,m')$. 
\end{defin}

Let $\la = ((1^{\la_1}),(1^{\la_2})), \mu = ((1^{\mu_1}),(1^{\mu_2}))\in\Bip(d)$ be two bipartitions of $d$. We write $\la\preceq\mu$ if $|\la_1 - \la_2| \ge |\mu_1 - \mu_2|$. We say that $\la$ is less dominant than $\mu$ and we write $\la\trianglelefteq\mu$, if additionally $\la, \mu$ have the same multiset of residues.

Now we shall generalise the order of Definition \ref{order on nodes} to an order on the set of standard tableaux. For a tableau $\TT$ we write $\TT{\downarrow_{\{1,\cdots,k\}}}$ for the subtableau of $\TT$ containing the entries $\{1,\cdots,k\}$ for $1\le k\le d$.

\begin{defin}\label{order on tableaux}
Let $\TT, \SS\in\Std(\la)$. We write $\TT \preceq \SS$ if and only if 
\[
\Shape(\TT{\downarrow_{\{1,\cdots,k\}}}) \preceq \Shape(\SS{\downarrow_{\{1,\cdots,k\}}}) 
\] 
for $1\le k \le d$. In addition if $\res(\TT) = \res(\SS)$, we write $\TT \trianglelefteq \SS$ and we say that $\TT$ is less dominant than $\SS$.
\end{defin}

Note that if $\TT\in\Std(d)$ is a standard tableau, we denote by $\TT^{t}$ the transpose of $\TT$. For ease of notation we will often use the transpose tableau. 

\begin{eg}
Let $\la = ((1),(1^{9}))\in\Bip(10)$, $\kappa = (0,2)$ and $e = 4$ and we consider the standard $\la$-tableaux 
\[
\Yvcentermath1
\Ylinethick{0.8pt}
{\TT}^{t} = \left( \, \young(\rten), \ \  \\
\young(123456789) \, \right) 
\]
and
\[
\Yvcentermath1
\Ylinethick{0.8pt}
{\SS}^{t} = \left( \, \young(8), \ \  \\
\young(12345679\rten) \, \right).
\]

We remark that $\res(\TT)\neq\res(\SS)$ and $\Shape(\TT{\downarrow_{\{1,\cdots,k\}}}) = \Shape(\SS{\downarrow_{\{1,\cdots,k\}}})$, for $1\le k\le 7$. However we have that 
$$
\Shape(\TT{\downarrow_{\{1,\cdots,k\}}}) \preceq \Shape(\SS{\downarrow_{\{1,\cdots,k\}}})
$$
for $8\le k\le 10$, hence we $\TT$ precedes $\SS$ in the order of Definition \ref{order on tableaux} and we write $\TT\prec\SS$. Now we consider the standard $\la$-tableau
\[
\Yvcentermath1
\Ylinethick{0.8pt}
{\mathsf{u}}^{t} = \left( \, \young(7), \ \  \\
\young(12345689\ten) \, \right) 
\]
and we note that $\res(\mathsf{u}) = \res(\SS)$. Since $\Shape(\SS{\downarrow_{\{1,\cdots,k\}}}) = \Shape(\mathsf{u}{\downarrow_{\{1,\cdots,k\}}})$, for $1\le k\le 6$, and 
$$
\Shape(\SS{\downarrow_{\{1,\cdots,k\}}}) \prec \Shape(\mathsf{u}{\downarrow_{\{1,\cdots,k\}}})
$$
for $7\le k\le 10$, we have that $\SS\trianglelefteq\mathsf{u}$, i.e $\SS$ is less dominant than $\mathsf{u}$.
\end{eg}

Another concept that we shall recall is the degree of a given standard tableau $\TT$. For that purpose we give a few more definitions. Let $\la\in\Bip(d)$ and $A$ be a node of $\la$. The node $A$ is called addable (resp. removable) if $[\la]\cup\{A\}\in\Bip(d+1)$ (resp. $[\la]-\{A\}\in\Bip(d-1)$) is a diagram of a bipartition. We emphasise that by the notion of addable node we also require that the resulting bipartition $[\la]\cup\{A\}$ will be an one-column bipartition of $d+1$. We denote by $\mathsf{Add}(\la)$ and $\mathsf{Rem}(\la)$ the set of addable and removable nodes of $\la$ respectively. For a residue $i\in I$ we define the sets
\[
\mathsf{Add}_{i}(\la) := \{A\in\mathsf{Add}(\la) \ | \ \res(A) = i \}\subset \mathsf{Add}(\la)
\]
and
\[
\mathsf{Rem}_{i}(\la) := \{A\in\mathsf{Rem}(\la) \ | \ \res(A) = i \}\subset \mathsf{Rem}(\la).
\] 
Then for a $\la$-tableau $\TT$ we denote by $\mathsf{Add}_{\TT}(k)$ and $\mathsf{Rem}_{\TT}(k)$ the following sets:
\begin{equation}\label{add of a tableau}
\mathsf{Add}_{\TT}(k) := \{A\in\mathsf{Add}(\Shape(\TT{\downarrow_{1,\cdots,k}})) \ | \ A\trianglelefteq \TT^{-1}(k)\}
\end{equation}
and 
\begin{equation}\label{rem of a tableau}
\mathsf{Rem}_{\TT}(k) := \{A\in\mathsf{Rem}(\Shape(\TT{\downarrow_{1,\cdots,k}})) \ | \ A\trianglelefteq \TT^{-1}(k)\}.
\end{equation}
for all $1\le k\le d$. By using (\ref{add of a tableau}), (\ref{rem of a tableau}) we define the degree of the tableau $\TT$.

\begin{defin}
Let $\TT\in\Std(d)$ be a standard tableau. We define the degree of the node $\TT^{-1}(k)$ to be
\[
\deg(\TT^{-1}(k)) := |\mathsf{Add}_{\TT}(k)| - |\mathsf{Rem}_{\TT}(k)|.
\]
The \textsf{degree of the tableau $\TT$} is the sum of the degrees of its nodes, namely
\[
\deg(\TT) = \sum_{k = 1}^{d}\deg(\TT^{-1}(k)).
\]
\end{defin}

\begin{defin}{\cite[Section 3]{Pla}}\label{initial tableau}
Let $\la = ((1^{\la_1}),(1^{\la_2}))\in\Bip(d)$ and $m = \min\{\la_1,\la_2\}$. We define the \textsf{initial tableau} $\TT^{\la}\in\Std(\la)$ to be the tableau obtained by filling the nodes increasingly down to columns as follows:
\begin{enumerate}
\item even numbers less than or equal to $2m$ in the first component,
\item odd numbers less than $2m$ in the second component,
\item numbers greater than $2m$ in the remaining nodes.
\end{enumerate}
\end{defin}

For a given bipartition $\la = ((1^{\la_1}),(1^{\la_2}))\in\Bip(d)$ the standard tableau $\TT^{\la}\in\mathsf{Std}(\la)$ is maximal under the order of Definition \ref{order on tableaux}. In order to simplify the notation, in later sections we shall write $\ui^{\la} = (i_{1}^{\la},\cdots,i_{d}^{\la})\in I^d$ instead of $\res(\TT^{\la})$ for the residue sequence of the tableau $\TT^{\la}$.

\begin{rmk}
The symmetric group $\mathfrak{S}_d$ acts in a natural way on the set  of tableaux. In particular if $\TT$ is a tableau and $s_{i}$ is a simple transposition, the tableau $s_i\TT$ obtained by interchanging the entries $i$, $i+1$. For any $\la$-tableau $\TT$ we define the word $w_{\TT}\in\mathfrak{S}_d$ to be the unique element of the symmetric group such that $w_{\TT}\TT^{\la} = \TT$.
\end{rmk}

\begin{defin}\label{skew}
Let $d, d^{\prime}\in\mathbb{Z}$ be two positive integers with $d^{\prime}< d$. If $\la\in\Bip(d)$ and $\nu\in\Bip(d^{\prime})$ with $[\nu]\subset [\la]$, we define the \textsf{skew bipartition} $\la\setminus\nu$ to be the bipartition with diagram the set difference $[\la]-[\nu]$.
\end{defin}

\begin{defin}
Let $d, d^{\prime}\in\mathbb{Z}$ with $d^{\prime}< d$, $\la\in\Bip(d)$, $\nu\in\Bip(d^{\prime})$ and let $\la\setminus\nu$ be the skew bipartition. If $\TT\in\Std(\nu)$ and $\SS\in\Std(\la\setminus\nu)$ then the $\la$-tableau with entries $\{1,2,\cdots,d\}$ in the nodes
\[
(\TT^{-1}(1),\cdots,\TT^{-1}(d^{\prime}),\SS^{-1}(1),\cdots,\SS^{-1}(d-d^{\prime})).
\]
respectively, is the composition $\TT\circ\SS\in\Std(\la)$ of the tableaux $\TT$ and $\SS$.
\end{defin}
   
\subsection{Paths and alcove geometry}

Let $\{\epsilon_1,\epsilon_2\}$ be formal symbols. We consider the 2-dimensional Euclidean space 
\[
V := \bigoplus_{i = 1, 2}\mathbb{R}\epsilon_i
\] 
with basis $\{\varepsilon_1,\varepsilon_2\}$ and let $V_{\mathbb{Z}_{\ge 0}}$ be the $\mathbb{Z}_{\ge 0}$-span of $\{\epsilon_1,\epsilon_2\}$. To any bipartition $\la = ((1^{\la_1}),(1^{\la_2}))\in\Bip(d)$ we attach a point of the Euclidean space $V$ via the embedding $((1^{\la_1}),(1^{\la_2}))\longmapsto \sum_{i = 1,2}\la_i\epsilon_i$. We consider the affine Weyl group $W_{\text{aff}}\cong\hat{\mathfrak{S}}_2$ of type $\hat{A}_1$ with $\alpha_1 = \epsilon_1 - \epsilon_2$ the corresponding simple real root. Note that the affine Weyl group is generated by the reflection $s_{\alpha_{1}, -1/2}$ and the reflection $s_{\alpha_{1}, 1/2}$, where the later corresponds to translation of the former by $e\alpha_{1}$. Let $(\cdot,\cdot)$ be a symmetric bilinear form on $V$ determined by $(\varepsilon_{i},\varepsilon_{j}) = \delta_{i,j}$, where $\delta_{i,j}$ is the Kronecker delta. For a given $e$-bicharge $\kappa = (\kappa_1,\kappa_2)$ we set $\rho := (\kappa_2 - \kappa_1)\varepsilon_1$. For any $m\in\mathbb{Z}$ we define the \textsf{hyperplane} 
\begin{equation}\label{hyperplanes}
H_{\alpha_1,m-\frac{1}{2}} := \{v\in V \ | \ (v+\rho, \alpha_1) = me\}.
\end{equation}
and we sometimes refer such a hyperplane as a \textsf{wall}. For any $m\in\mathbb{Z}$ there exists a unique reflection $s_{\alpha_1,m-1/2}$ such that 
\[
s_{\alpha_1,m-\frac{1}{2}}\cdot v = v - ((v+\rho,\alpha_1)-me)\alpha_1
\]
for any $v\in V$. In other words $s_{\alpha_1,m-1/2}$ acts on $V$ by reflection with respect to the hyperplane $H_{\alpha_1,m-1/2}$. From now on, since we have only one simple real root $\alpha_1$, we shall write simply $H_{m-1/2}$, $s_{m-1/2}$ for the wall and the reflection corresponding to the integer $m\in\mathbb{Z}$, respectively.

For any two integers $r,s\in\mathbb{Z}$ we denote by $[r,s]$ the set $[r,s] = \{t\in\mathbb{Z} \ | \ r\le t\le s\}$. For $d\in\mathbb{Z}_{>0}$ we define $\mathsf{Path}(d)$ to be the set of maps $\pi\colon [0,d]\lxr V_{\mathbb{Z}_{\ge 0}}$ such that 
\[
\pi(0) = 0 \ \text{and} \ \pi(k+1) - \pi(k)\in\{\epsilon_1,\epsilon_2\}
\]
for all $k\in [0,d-1]$ and we call its elements \textsf{paths} from $0$ to $d$. Given a standard tableau $\TT\in\mathsf{Std}(d)$ we define the point $\pi_{\TT}(k)$ in the space $V_{\mathbb{Z}_{}\ge 0}$ by the formula
\begin{equation}\label{step of path}
\pi_{\TT}(k) := c_{k,1}(\TT)\epsilon_1 + c_{k,2}(\TT)\epsilon_2
\end{equation} 
where $c_{k,i}(\TT)$ is the number of nodes of the tableau $\TT{\downarrow_{\{1,\cdots,k\}}}$ in the $i^{\mathrm{th}}$ component. Using the aforementioned notation we shall define the path in $V_{\mathbb{Z}_{\ge 0}}$ attached to a standard tableau $\TT\in\Std(\la)$.

\begin{defin}\label{path of a tableau}
Let $\TT\in\Std(d)$ be a standard tableau. We define the path $\pi_{\TT}$ corresponding to the tableau $\TT$ given by the sequence of points
\[
\pi_{\TT} = (\pi_\TT(0),\cdots,\pi_\TT(d))
\]
in the sense of relation (\ref{step of path}). There is a bijection between the set $\mathsf{Std}(d)$ of standard tableaux and the set of paths $\mathsf{Path}(d)$, given by $\TT\longmapsto\pi_{\TT}$.
\end{defin}

%\begin{notation}
%Let $\TT\in\mathsf{Std}(d)$ be a standard tableau and $\pi_\TT(k)$, $k\in[0,d]$ be the $i^{\mathrm{th}}$-point, that $\pi_\TT$ intersects the hyperplane $H_{m-1/2}$. We shall denote the integer $k$ with that property by $\mathfrak{t}_{m-1/2}^{i}$.
%\end{notation}

%Let $\la\in\Bip(d)$ be a bipartition. We shall denote by $\mathsf{Path}(\la)$ the following subset of $\mathsf{Path}(d)$,
%\[
%\mathsf{Path}(\la) := \{\pi_{\TT}\in\mathsf{Path}(d) \ | \ \Shape(\TT) = \la\}.
%\]

Using the notation above we shall define the reflected path through a hyperplane of $V$. 

\begin{defin}
Let $\TT\in\mathsf{Std}(d)$ and suppose that $\pi_{\TT}(a)\in H_{m-1/2}$ is the $i^{\mathrm{th}}$ intersection point of $\pi_{\TT}$ with the hyperplane $H_{m-1/2}$. We define the path $s_{m-1/2}^{i}\cdot\pi_\TT$ as follows
\[
(s_{m-\frac{1}{2}}^{i}\cdot\pi_\TT)(k) : =
\begin{cases}
\pi_\TT(k) \hspace{1.55cm} \text{if} \ 0\le k \le a \\
s_{m-1/2}\cdot\pi_\TT(k) \ \text{if} \ a < k\le d
\end{cases}.
\]   
We refer to the path $s_{m-1/2}^{i}\cdot\pi_\TT$ as the \textsf{reflected path} through the $i^{\mathrm{th}}$ intersection point of $\pi_\TT$ with the hyperplane $H_{m-1/2}$. 
\end{defin}

\begin{rmk}
Note that if the path $\pi_{\TT}$ intersects the hyperplane $H_{m-1/2}$ at a unique point, then we shall denote the reflected path simply by $s_{m-1/2}\cdot\pi_{\TT}$.
\end{rmk}

\begin{figure}
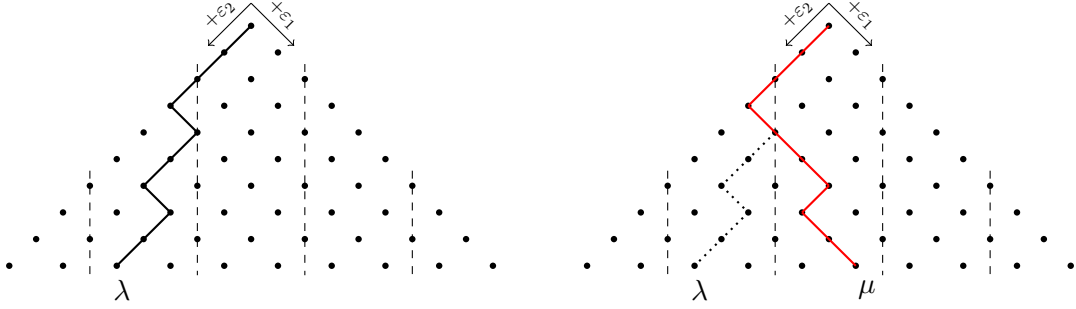

\[
% [inline block 1: 2 envs, 7895 chars -> data_tex | \begin{tikzpicture} \coordinate (X1) at (7.5,2);...]

 \]
 \vspace{-0.8cm}
  \caption{The path $\tT$ and the reflected path $s_{-1/2}^{2}\cdot\tT$.}
   \label{first examples of paths}
\end{figure}

In Figure \ref{first examples of paths} we visualise the last definition. We draw a path with endpoint the bipartition $\la$ and we also draw the reflected path through its second intersection point with the hyperplane $H_{-1/2}$.

Let $u,v\in V_{\mathbb{Z}_{\ge 0}}$ such that $u - v = \epsilon_i$, $i = 1, 2$. Then we define the degree of the pair $(u,v)$ as follows
\begin{equation}\label{degree of a pair}
\mathrm{deg}(u,v) := 
\begin{cases}
1 \hspace{0.465cm} \text{if} \ u\in H_{m-\frac{1}{2}} \ \text{and} \ |(v+\rho,\alpha_1)| < |me| \ \text{for some} \ m\in\mathbb{Z}; \\
-1 \ \text{if} \ v\in H_{m-\frac{1}{2}} \ \text{and} \ |(u+\rho,\alpha_1)| > |me| \ \text{for some} \ m\in\mathbb{Z}; \\
0 \hspace{0.47cm} \text{otherwise}.
\end{cases}
\end{equation} 

By using relation (\ref{degree of a pair}) we are able to give a reinterpretation of the degree of a tableau in terms of paths. Let $\TT\in\mathsf{Std}(d)$ and $\pi_{\TT}\in\mathsf{Path}(d)$ be the path corresponding to $\TT$. The integer 
\[
\mathrm{deg}(\pi_{\TT}) := \sum_{k = 0}^{d-1}\deg(\pi_{\TT}(k),\pi_{\TT}(k+1)) 
\]
is the \textsf{degree of the path} $\pi_{\TT}$. By \cite[Corrolary 4.6]{Pla} we have that the degree of the path $\pi_{\TT}$ coincides with the degree of the tableau $\TT$.

Using the aforementioned notions we are able to describe an \textsf{alcove geometry} on the Euclidean space $V$. We say that for any $m\in\mathbb{Z}$, the set of points
\[
\mathfrak{a}_m := \left\{v\in V \ | \ me < (v+\rho,\alpha_1) < (m+1)e\right\}
\]
forms an \textsf{alcove}. By the definition of the hyperplane as presented in (\ref{hyperplanes}), we can deduce that the origin, namely the point $(0,0)$, will always lie in an alcove and not on a hyperplane. We consider a Pascal triangle with points corresponding to integers and the top of the triangle corresponds to $0$. We can represent the paths in $V$ as paths in the Pascal triangle starting from the top and moving downwards. Let $\la = ((1^{\la_1}),(1^{\la_2}))\in\Bip(d)$ and $\TT\in\mathsf{Std}(\la)$ be a standard tableau. The path $\pi_{\TT}$ is a path starting from the top of the Pascal triangle and ending at a point corresponding to the integer $\la_1 - \la_2$ at the level $d$ of the triangle.

\begin{notation}
From now on we shall not distinguish between the standard tableau and the corresponding path. Namely, we will denote the path corresponding to the tableau $\TT$ by
\[
\tT = (\tT(0),\cdots,\tT(d))\in\mathsf{Path}(d).
\] 
Moreover, let $\la\in\Bip(d)$ and $\mu\in\Bip(d')$ with $d' < d$. We denote by $\mathsf{Path}(\mu\rightarrow\la)$ the set of paths starting from the bipartition $\mu$ and ending at the bipartition $\la$. We also let $\mathsf{Path}(\la) := \mathsf{Path}(\varnothing\rightarrow\la)$. Namely the paths of $\mathsf{Path}(\la)$ are paths starting from the top of the Pascal triangle and they have the bipartition $\la$ as endpoint. By using the above notation we have that
\[
\mathsf{Path}(d) = \bigcup_{\la\in\Bip(d)}\mathsf{Path}(\la).
\] 
\end{notation}

In the following example we shall summarise most of the facts we discussed above. Recall that we denote by $\TT^{t}$ the transpose of a tableau $\TT\in\Std(d)$.
  
%\vspace{-1cm}
\begin{eg}\label{eg on paths and reflection}
%\begin{multicols}{2}
Suppose $e = 4$, $d = 9$, $\kappa = (0,2)$ and let $\la = ((1^2),(1^7)), \mu = ((1^5),(1^4))\in\Bip(9)$. We consider the $\la$-tableau
\[
\Yvcentermath1
\Ylinethick{0.8pt}
\TT^{t} = \left( \, \young(47), \ \  \\
\young(1235689) \, \right).
\] 

By following the description above we can construct the path corresponding to the tableau $\TT$ as in the left picture in Figure \ref{first examples of paths}. We observe that the path $\tT$ intersects the hyperplane $H_{-1/2}$ at two points which correspond to the steps $\mathfrak{t}_{-1/2}^{1}$ and $\mathfrak{t}_{-1/2}^{2}$ of the path. Then we obtain the reflected paths $s_{-1/2}^{1}\cdot\tT$, $s_{-1/2}^{2}\cdot\tT$ and the later is also pictured in Figure \ref{first examples of paths}. The endpoint of the reflected paths is the bipartition $\mu$.

Moreover one can easily calculate the degree of the path $\tT$ to be equal to $-1$. To see this note that $\mathrm{deg}(\tT(3),\tT(4)) = -1$ and degree is zero otherwise. This is something we expect since $\mathrm{deg}(\TT^{-1}(4)) = -1$ and the rest nodes of the tableau $\TT$ are of degree $0$.

The residue sequence of the tableau $\TT$ is 
\[
\mathsf{res}(\TT) = (2,1,0,0,3,2,3,1,0)
\] 
and we observe that $\mathsf{res}(s_{-{1}/{2}}^{2}\cdot\TT) = \mathsf{res}(\TT)$.  
\end{eg} 

More generally, from \cite[Lemma 4.7]{Pla} we have that given any two tableaux $\TT, \SS\in\mathsf{Std}(d)$ we have that
\begin{equation}\label{residues and reflections}
\mathsf{res}(\TT) = \mathsf{res}(\SS) \Longleftrightarrow \tT = s_{i_1-1/2}^{j_1}\cdots s_{i_a-1/2}^{j_a}\cdot\sS 
\end{equation}
for some simple reflections $s_{i_l-1/2}$, $1\le l\le a$. Given two bipartitions $\la,\mu\in\Bip(d)$ and $\tT\in\mathsf{Path}(\la)$, we define the set of $\mu$-paths which can be obtained by $\tT$ by a series of reflections as follows:
\begin{equation*}
\mathsf{Path}(\mu,\tT) := \{\sS\in\mathsf{Path}(\mu) \ | \ \sS =  s_{i_1-1/2}^{j_1}\cdots s_{i_a-1/2}^{j_a}\cdot\tT, \ \text{for some} \ s_{i_l-1/2}\in\hat{\mathfrak{S}}_2\}.
\end{equation*}
Now we equip our alcove geometry with a \textit{length function}  
\[
\ell\colon\Bip(d)\lxr\tfrac{1}{2}\mathbb{Z}, \ \la = ((1^{\la_1}),(1^{\la_2}))\longmapsto 
\begin{cases}
m & \text{if} \ \la_1\varepsilon_{1} + \la_2\varepsilon_{2} \in \mathfrak{a}_m \\
m-\frac{1}{2} & \text{if} \ \la_1\varepsilon_{1} + \la_2\varepsilon_{2} \in H_{m-\frac{1}{2}}.
\end{cases}
\]

We will also give a useful geometric interpretation of the dominance order on tableaux, mentioned in Definition \ref{order on tableaux} in terms of the alcove geometry. Given two tableau $\TT, \SS\in\mathsf{Std}(d)$ with $\mathsf{res}(\TT) = \mathsf{res}(\SS)$ we say that  the node $\TT^{-1}(k)$ is less dominant that the node $\SS^{-1}(k)$ in the sense of Definition \ref{order on nodes} if and only if 
\[
|\ell(\Shape(\TT\downarrow_{\{1,\cdots,k\}}))| > |\ell(\Shape(\SS\downarrow_{\{1,\cdots,k\}}))|.
\]

The tableau $\TT$ is less dominant  than $\SS$ if and only if $\TT^{-1}(k)\trianglelefteq \SS^{-1}(k)$, $1\le k\le d$, and there is at least one node of $\TT$ strictly less dominant than the corresponding node of $\SS$.   

\begin{eg}
We continue on the Example \ref{eg on paths and reflection} and we have that $\ell(\la) = -1$ while $\ell(\mu)$ = 0. The paths $\sS_1, \sS_2$ drawn in the following figure are the elements of $\mathsf{Path}(\mu,\tT)$.
\[
% [inline block 2: 2 envs, 8117 chars -> data_tex | \begin{tikzpicture} \coordinate (X1) at (7.5,2);...]

\]
\vspace{-1.1cm}

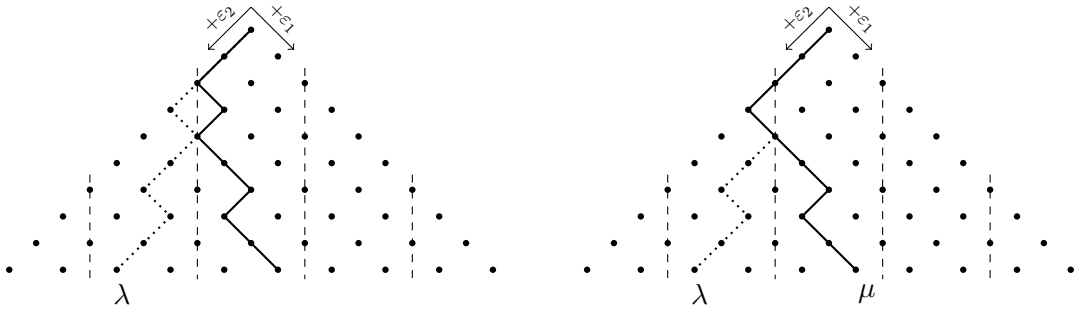
\captionof{figure}{The paths $\sS_1$ and $\sS_2$ are solid. The path $\tT$ is dotted.}
%\vspace{0.2cm}
In particular we have that $\sS_1 = s_{-1/2}^{1}\cdot\tT$ and $\sS_2 = s_{-1/2}^{2}\cdot\tT$. Moreover we observe that $\sS_{1}\trianglerighteq\sS_{2}\trianglerighteq\tT$.
\end{eg}

Recall that to each bipartition $\la = ((1^{\la_1}),(1^{\la_2}))\in\Bip(d)$ we can attach the integer $\la_1 - \la_2$. Hence the action of the affine Weyl group $W_{\mathrm{aff}}$ on the set of bipartitions of $d$ can be described in terms of the action of $W_{\mathrm{aff}}$ on $\mathbb{Z}$. In particular, reflection $s_{m-1/2}$ corresponds to reflection about the integers $(\kappa_1 - \kappa_2) + me$, for all $m\in\mathbb{Z}$. Note that the integer obtained by reflecting as above corresponds to a bipartition of $d$. We say that two bipartitions $\la, \mu$ are \textit{linked} with respect to the alcove geometry of type $\hat{A}_1$ and we write $\la\sim\mu$ if they belong to the same $W_{\mathrm{aff}}$-orbit, i.e $\la\in W_{\text{aff}}\cdot\mu$. If $\tT\in\mathsf{Path}(\la)$ then the paths linked with $\tT$ are the paths of $\mathsf{Path}(\mu,\tT)$, defined above, for $\mu\sim\la$. The paths linked with the path $\tT^{\la}$ will be of particular interest when we construct homomorphisms of the blob algebra. We define 
\[
\mathsf{Path}_{\sim}(\la) := \bigcup_{\mu\trianglerighteq \la}\mathsf{Path}(\mu,\tT^{\la}).
\] 

\begin{rmk}
If $\la,\mu\in\Bip(d)$ are two bipartitions, we note that $\la$ is less dominant than $\mu$ if and only if $\la\sim\mu$ and $|\ell(\la)|>|\ell(\mu)|$, i.e. $\la$ is further away from the origin of the Pascal triangle than $\mu$.
\end{rmk}

Let us see an example regarding the notions we discussed above. 

\begin{eg}\label{example linkage principle}
Let $d = 9, e=4, \kappa = (0,2)$ as in Example \ref{eg on paths and reflection} and $\la = ((1),(1^{8}))\in\Bip(9)$. Then 
\[
\Yvcentermath1
\Ylinethick{0.8pt}
{(\TT^{\la})}^{t} = \left( \, \young(2), \ \  \\
\young(13456789) \, \right) 
\]
and the paths linked with $\tT^{\la}$ are drawn in the following diagram
\[
% [inline block 3: 1 envs, 3942 chars -> data_tex | \begin{tikzpicture} \coordinate (X1) at (7.5,2);...]

\]  
\vspace{-0.8cm}
\captionof{figure}{The red path is the path $\tT^\la$ and the black ones are those linked with $\tT^{\la}$.}
\vspace{0.1cm}
The new linked paths correspond to the tableaux
\[
\Yvcentermath1
\Ylinethick{0.8pt}
\SS_1^{t} = \left( \, \young(29), \ \  \\
\young(1345678) \, \right)
\]
\[
\Yvcentermath1
\Ylinethick{0.8pt}
\SS_2^{t} = \left( \, \young(25678), \ \  \\
\young(1349) \, \right) 
\]
and
\[
\Yvcentermath1
\Ylinethick{0.8pt}
\SS_3^{t} = \left( \, \young(256789), \ \  \\
\young(134) \, \right). 
\]
Thus $\mathsf{Path}_{\sim}(\la) = \{\tT^{\la}, \sS_1, \sS_2, \sS_3\}$ and one can easily see that for a given bipartition $\mu\in\Bip(9)$, $\mathsf{Path}(\mu,\TT^{\la})$ implies $\mu\trianglerighteq\la$.
\end{eg}

%\begin{tikzpicture}                %%% example of polar coordinates
%\coordinate (Q) at (0,0); 
 %  \coordinate (A) at ($(Q)+(0:4)$);
  % \coordinate (P) at ($(Q)+(90:3)$);
 
   %\draw (A) -- (Q) -- (P) -- cycle;
 
   %\path (A) -- node [midway,sloped,above] {r} (P);
   %\path (A) -- node [midway,sloped,below] {a} (Q);
   %\path (P) -- node [midway,sloped,below] {x} (Q);
   %\angle {A} --  ["$\theta$",draw,,angle radius=1cm] {angle = Q--A--P}
   %\tkzMarkRightAngle(A,Q,P);
%\end{tikzpicture}

\section{The Blob Algebra}

\subsection{Definition and basic properties}
In this section we shall introduce the main object of our study, namely the blob algebra. The blob algebra was first introduced by Martin and Saleur \cite{MaSa}, but we shall present the equivalent definition given in \cite{PlaRH}.  We will not give many details regarding the structure of the blob algebra. The interested reader may look for further details in the literature, for example \cite{Pla} and \cite{PlaRH}. As in the last section we fix two positive integers $d > 0$ and $e\in\{2,3,\cdots\}$ with $I := (\mathbb{Z}/e\mathbb{Z})^d$ and let $F$ be a field of characteristic $p\ge 0$.

\begin{defin}\label{blob alg}
Let $\kappa = (\kappa_1,\kappa_2)$ be an $e$-bicharge. The \textsf{blob algbera} $\mathsf{B}_{d}^{\kappa}$ is the $F$-algebra on the generators
\[
\{\mathsf{e}(\ui) \ | \ \ui\in I^{d}\}\cup\{y_1,\cdots,y_d\}\cup\{\psi_1\cdots,\psi_{d-1}\}
\]
subject to the usual KLR relations
\begin{eqnarray}
\label{2.1}\mathsf{e}(\ui) & = & 0, \hspace{5.2cm} \text{if} \ i_1 \neq \kappa_1,\kappa_2 \\
\label{2.2}y_1\mathsf{e}(\ui) & = & 0, \hspace{5.2cm} \text{if} \ i_1 = \kappa_1,\kappa_2  \\
\label{2.3}\mathsf{e}(\ui)\mathsf{e}(\underline{j}) & = & \delta_{i,j}\mathsf{e}(\ui), \\
\sum_{\ui\in I^d}\mathsf{e}(\ui) & = & 1, \\
y_r\mathsf{e}(\ui) & = & \mathsf{e}(\ui)y_r, \\ 
\psi_r\mathsf{e}(\ui) & = & \mathsf{e}(s_r\ui)\psi_r, \\
y_ry_s & = & y_sy_r, \\
\label{2.8}\psi_ry_s & = & y_s\psi_r, \hspace{4.7cm} \text{if} \ |r-s|>1 \\
\label{2.9}\psi_r\psi_s & = & \psi_s\psi_r, \hspace{4.6cm} \text{if} \ |r-s|>1 \\
\label{2.10}\psi_ry_{r+1}\mathsf{e}(\ui) & = & (y_{r}\psi_{r}+\delta_{i_r,i_{r+1}})\mathsf{e}(\ui), \\
\label{2.11}y_{r+1}\psi_r\mathsf{e}(\ui) & = & (\psi_{r}y_{r}+\delta_{i_r,i_{r+1}})\mathsf{e}(\ui), \\
\label{2.12}\psi_{r}^{2}\mathsf{e}(\ui) & = & 
\begin{cases}
0, \hspace{4.93cm} \text{if} \ i_r = i_{r+1}  \\
\mathsf{e}(\ui), \hspace{4.53cm} \text{if} \ i_r \neq i_{r+1}\pm 1 \\
(y_{r+1} - y_r)\mathsf{e}(\ui), \hspace{2.68cm} \text{if} \ i_r = i_{r+1} + 1 \\
(y_r - y_{r+1})\mathsf{e}(\ui), \hspace{2.68cm} \text{if} \ i_r = i_{r+1} - 1 
\end{cases} \\
\psi_r\psi_{r+1}\psi_{r}\mathsf{e}(\ui) & = &
\begin{cases}
\label{2.13}(\psi_{r+1}\psi_r\psi_{r+1} + 1)\mathsf{e}(\ui), \hspace{1.57cm} \text{if} \ i_{r+2} = i_r = i_{r+1} - 1 \\
(\psi_{r+1}\psi_r\psi_{r+1} - 1)\mathsf{e}(\ui), \hspace{1.57cm} \text{if} \ i_{r+2} = i_r = i_{r+1} + 1 \\
\psi_{r+1}\psi_{r}\psi_{r+1}\mathsf{e}(\ui), \hspace{2.50cm} \text{otherwise} 
\end{cases}
\text{}
\end{eqnarray}
along with the additional blob relation
\begin{eqnarray}
\label{2.14}\mathsf{e}(\ui) & = & 0, \hspace{5cm} \text{if} \ i_2 = i_1 + 1.
\end{eqnarray}
\end{defin}

The function $\mathrm{deg}\colon\mathsf{B}^{\kappa}_{d}\lxr\mathbb{Z}$ determined by 
\[
\mathrm{deg}(\mathsf{e}(\ui)) = 0, \quad \mathrm{deg}(y_r\mathsf{e}(\ui)) = 2, \quad \mathrm{deg}(\psi_s\mathsf{e}(\ui)) =
\begin{cases} 
-2 \quad \text{if} \ i_s = i_{s+1} \\
0 \quad\quad\hspace{-0.1cm} \text{if} \ i_s\neq i_{s+1} \pm 1 \\
-1 \quad \text{if} \ i_s = i_{s+1} \pm 1
\end{cases}
\]
for $1\le r\le d$ and $1\le s\le d-1$ is a degree function of $\mathsf{B}_d^{\kappa}$. Thus the blob algebra is a $\mathbb{Z}$-graded algebra with $\mathrm{deg}$ being the degree function. We also let $\ast\colon\blob\lxr\blob$ be the anti-involution defined by fixing the KLR generators.

Note that there is a diagrammatic presentation of the blob algebra in terms of KLR diagrams. For a more detailed description about the KLR diagrams we refer to \cite{LibPl}, \cite{HaMaPa}. 
 \begin{comment}Following \cite{LibPl} and \cite{HaMaPa}, a KLR diagram of $\blob$ consists of $d$ strings, that are paths $p_{i}\colon[0,1]\lxr\mathbb{R}\times[0,1]$, $1\le i\le d$, satisfying the following properties 
\begin{itemize}
\item for each $p_i$ we have that $p_{i}(0) = (x,0)$ and $p_{i}(1) = (x^{\prime},1)$ for some $x, x^{\prime}\in\mathbb{R}$,
\item all intersections are transversal,
\item there are no triple intersections,
\item each path may be decorated by a finite number of dots at non-intersection points.
\end{itemize}
\end{comment}
Each KLR diagram of $\blob$ consists of $d$ strings and each string carries a residue $i\in\mathbb{Z}/e\mathbb{Z}$. The bottom and the top of the KLR diagram are sequences of residues. The product of two KLR diagrams is given by vertical concatenation. If $\ui = (i_1, i_2,\cdots,i_d)\in I^{d}$ we have that that
\[
% [inline block 4: 3 envs, 2343 chars -> data_tex | \begin{tikzpicture} \coordinate (A1) at (0,0);...]

\]
The diagrammatic interpretation of (\ref{2.1})-(\ref{2.14}) is given in \cite{HaMaPa}.

If $w = s_{i_1}\cdots s_{i_l}\in\mathfrak{S}_d$ is a reduced expression of an element of the symmetric group, we set
\[
\psi_{w} = \psi_{i_1}\cdots\psi_{i_l}\in\mathsf{B}_{d}^{\kappa}.
\]
Recall that for any $\la$-tableau $\TT$, $\la\in\Bip(d)$, we have defined the reduced expression $w_{\TT} = s_{i_1}\cdots s_{i_l}\in\mathfrak{S}_d$ such that $\TT = w_{\TT}\TT^{\la}$. Later we shall prove that this reduced expression is unique up to the commuting relations of the symmetric group and the KLR algebra. We define the element
\[
\psi_{\TT} := \psi_{i_1}\cdots\psi_{i_l}\mathsf{e}(\ui^{\la}).
\]
and we shall refer to the word $w_{\TT}$ as \textsf{the reduced expression of $\TT$}.
\begin{defin}
Suppose that $\la\in\Bip(d)$ and $\TT, \SS\in\mathsf{Std}(\la)$. We define the element
\[
\psi_{\SS\TT} := \psi_{\SS}\mathsf{e}(\ui^{\la})\psi_{\TT}^{\ast}\in\mathsf{B}_{d}^{\kappa}.
\]
\end{defin}

\begin{thm}[{\cite[Theorem 6.10]{PlaRH}}]\label{blob cellular} The blob algebra $\mathsf{B}_{d}^{\kappa}$ is a graded $F$-algebra with basis
\[
\{\psi_{\SS\TT} \ | \ \SS, \TT\in\mathsf{Std}(\la) \ \text{for} \ \la\in\Bip(d)\}.
\]
We let $\mathsf{B}_{d,\triangleright\la}^{\kappa}$ be the $F$-submodule of $\blob$ with basis 
\[
\{\psi_{\mathsf{u}\mathsf{v}} \ | \ \mathsf{u},\mathsf{v} \in\mathsf{Std}(\mu) \ \text{for} \ \mu\in\Bip(d), \mu\triangleright \la\}.
\]
Under the anti-involution $\ast\colon\blob\lxr\blob$, we have $\psi_{\SS\TT}^{\ast} = \psi_{\TT\SS}$. For any $\la\in\Bip(d)$, $\TT\in\mathsf{Std}(\la)$ and $a\in\blob$ there exists $\alpha_{\mathsf{u}}\in F$ such that for all $\SS\in\mathsf{Std}(\la)$ 
\[
a\psi_{\SS\TT} = \sum_{\mathsf{u}\in\mathsf{Std}(\la)}\alpha_{\mathsf{u}}\psi_{\mathsf{u}\TT} \mod \ \mathsf{B}_{d,\triangleright\la}^{\kappa}.
\]
In particular the blob algebra $\blob$ is a graded cellular algebra. 
\end{thm}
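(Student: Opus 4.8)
This is \cite[Theorem 6.10]{PlaRH}, so we only indicate the shape of the proof. The plan is to verify the graded version of the Graham--Lehrer cellular algebra axioms for the datum consisting of the poset $(\Bip(d),\trianglerighteq)$, the index sets $\Std(\la)$, the anti-involution $\ast$, and the proposed family $\{\psi_{\SS\TT}\}$. There are two natural routes. One can deduce everything from the known graded cellular basis of the level-$2$ cyclotomic KLR algebra --- available once one passes through the Brundan--Kleshchev isomorphism \cite{BrKl} --- by pushing that basis to the quotient $\blob$ and determining which basis elements survive the blob relations \eqref{2.1} and \eqref{2.14}; this already yields linear independence for free. Alternatively one argues directly with the diagrammatic calculus of $\blob$, and this second, more self-contained route is the one I would take. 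In either case essentially all of the work lies in the spanning/straightening step.

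\emph{Spanning, straightening, and the multiplication rule.} In the diagrammatic calculus every element of $\blob$ is an $F$-linear combination of monomials $\psi_w\,y_1^{a_1}\cdots y_d^{a_d}\,\mathsf{e}(\ui)$ with $w\in\mathfrak{S}_d$ and $\ui\in I^d$. Using \eqref{2.1}--\eqref{2.2}, the one-column constraint on which residue sequences are non-zero, and \eqref{2.10}--\eqref{2.11} to slide dots past crossings, one shows that the dot generators can be absorbed modulo diagrams that factor through a strictly $\triangleright$-larger residue sequence, so it suffices to treat $\psi_w\mathsf{e}(\ui)$. If $\ui\neq\res(\TT^\la)$ for every one-column $\la$, then $\mathsf{e}(\ui)$ already lies in $\mathsf{B}^\kappa_{d,\triangleright\la}$ for the relevant $\la$; otherwise $\ui=\ui^\la$, and one rewrites $\psi_w\mathsf{e}(\ui^\la)$ by repeatedly applying the quadratic and braid relations \eqref{2.12}--\eqref{2.13}. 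Each such move either replaces $w$ by a word that is shorter, or commutation-equivalent, keeping the diagram inside the span of $\{\psi_{\SS\TT}\mid\Shape(\SS)=\Shape(\TT)=\la\}$, or else it creates an error term supported on $\mathsf{B}^\kappa_{d,\triangleright\la}$. Since $\TT^\la$ is maximal in the dominance order (the comment after Definition \ref{initial tableau}), the induction on $\trianglerighteq$ terminates and gives the spanning claim. Carrying out exactly the same reductions with an arbitrary $a\in\blob$ to the left of $\psi_{\SS\TT}$ produces the cellular multiplication formula $a\psi_{\SS\TT}\equiv\sum_{\mathsf{u}\in\Std(\la)}\alpha_{\mathsf{u}}\psi_{\mathsf{u}\TT}$ modulo $\mathsf{B}^\kappa_{d,\triangleright\la}$, and the coefficients $\alpha_{\mathsf{u}}$ are independent of $\TT$ because every manipulation happens to the left of the idempotent $\mathsf{e}(\ui^\la)$, i.e. only on the $\SS$-side. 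The identity $\psi_{\SS\TT}^\ast=\psi_{\TT\SS}$ is immediate from $\psi_{\SS\TT}=\psi_{\SS}\mathsf{e}(\ui^\la)\psi_\TT^\ast$ since $\ast$ fixes the KLR generators and reverses products; it converts the left multiplication rule into its right-handed version, so the $\mathsf{B}^\kappa_{d,\triangleright\la}$ form a chain of two-sided ideals.

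\emph{Independence, grading, and the main obstacle.} Linear independence follows from a dimension count. For $\la=((1^{\la_1}),(1^{\la_2}))$ with $\la_1+\la_2=d$ one has $|\Std(\la)|=\binom{d}{\la_1}$, so the proposed basis has $\sum_{k=0}^{d}\binom{d}{k}^{2}=\binom{2d}{d}$ elements, which is exactly $\dim_F\blob$ as computed originally by Martin and Saleur \cite{MaSa}; a spanning set of the correct cardinality is automatically a basis. Every defining relation \eqref{2.1}--\eqref{2.14} is homogeneous for the stated degree function and $\deg(\psi_{\SS\TT})=\deg(\SS)+\deg(\TT)$, so the cellular structure is graded. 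The main obstacle throughout is the straightening step: one must show that an arbitrary diagram reduces to the normal form $\psi_{\SS\TT}$ modulo $\mathsf{B}^\kappa_{d,\triangleright\la}$, which requires a careful bookkeeping induction organised by the dominance order and modelled on the analysis of graded cellular bases for cyclotomic KLR algebras; here the two extra blob relations \eqref{2.1} and \eqref{2.14} make the level-$2$, one-column combinatorics manageable. Once the spanning set is known to have $\binom{2d}{d}$ elements, independence and the remaining cellular axioms are routine.
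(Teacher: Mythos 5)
The paper itself offers no argument here: the statement is quoted verbatim from \cite[Theorem 6.10]{PlaRH}, so your sketch can only be compared with that source. Plaza and Ryom--Hansen prove it essentially along the first route you mention and then set aside: they start from the Hu--Mathas graded cellular basis \cite{HM} of the level-two cyclotomic KLR algebra (available via the Brundan--Kleshchev isomorphism \cite{BrKl}), push it through the quotient map onto $\blob$, and identify which basis elements survive --- namely those indexed by one-column bipartitions --- rather than running a straightening algorithm from scratch inside $\blob$. Your chosen ``self-contained'' diagrammatic route is plausible in outline (and you rightly locate all the work in the straightening step), but it stops being self-contained exactly at the linear-independence step: with $\blob$ defined by the presentation of Definition~\ref{blob alg}, the Martin--Saleur computation \cite{MaSa} gives $\dim_F\blob=\binom{2d}{d}$ only once one knows that this presented algebra is isomorphic to the diagrammatic blob algebra, and that identification is itself a theorem of \cite{PlaRH} obtained through \cite{BrKl}. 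Without it, your spanning argument yields only the upper bound $\dim_F\blob\le\binom{2d}{d}$, so quoting the dimension as ``known'' risks circularity; you should either invoke the isomorphism of presentations explicitly (at which point the first route hands you independence for free, as you note) or produce an independent lower bound, e.g.\ by exhibiting enough representations. The remaining points are fine: $\psi_{\mathsf{S}\mathsf{T}}^{\ast}=\psi_{\mathsf{T}\mathsf{S}}$ is immediate once $\psi_{\mathsf{S}}$ is known to be independent of the chosen reduced expression, which is the content of Remark~\ref{remark for the induction}, and the grading follows from homogeneity of the defining relations. One small slip: the ``blob'' relations proper are the second and last relations of Definition~\ref{blob alg} (the relation $y_{1}\mathsf{e}(\ui)=0$ and the vanishing of $\mathsf{e}(\ui)$ when $i_{2}=i_{1}+1$), whereas the relation you pair with the latter is the cyclotomic KLR relation.
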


\begin{rmk}\label{remark for the induction}

Let $\la\in\Bip(d)$ be a bipartition of $d$ and $r, r+1, r+2$, $1\le r\le d-2$, be three successive positive integers. There are eight different cases for a standard $\la$-tableau and four of them are depicted in Figure \ref{different cases of tableaux} and we denote them (T1)-(T4) respectively. The rest four standard tableaux are the ones obtained by interchanging the numbers between the components and we denote them (T1') - (T4'). For instance the tableau (T1') is the tableau with $r+2$ in the first component and $r, r+1$ in the second component.  
\begin{figure}[ht!]
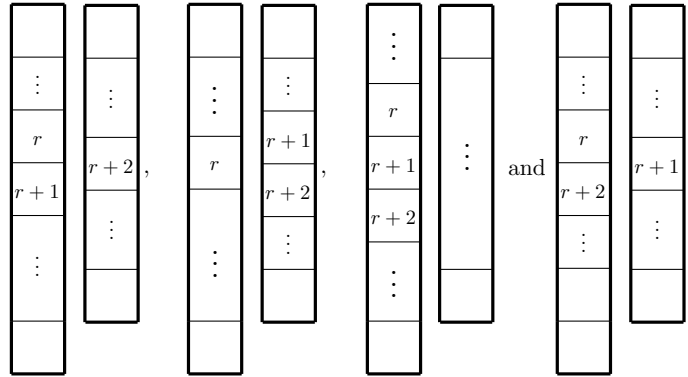

\[
% [inline block 5: 8 envs, 11501 chars -> data_tex | \begin{tikzpicture}[scale = 0.7] %coordinates...]

\]
\vspace{-0.3cm}
\caption{We depict the four out of eight different cases of standard tableaux for three successive integers $r, r+1, r+2$ and we denote them (T1)-(T4).}
\label{different cases of tableaux}
\end{figure}

The subword $s_{r}s_{r+1}s_{r}$ cannot appear in the reduced expression of any of the above tableaux, as if we apply it to any standard tableau we get a non-standard tableau. In particular if $\TT\in\Std(\la)$ is the initial tableau, the non-standard tableau would be the one coming from the interchange of the nodes occupied by the entries $r, r+2$, which can be denoted by $\TT_{r\leftrightarrow r+2}$. Hence the reduced expression for each tableau is unique up to the commuting relations of the symmetric group. It follows that for any tableau $\TT$ with $w_{\TT} = s_{i_1}\cdots s_{i_k}$, the element $\psi_{i_1}\cdots\psi_{i_k}$ is unique up to the KLR relation (\ref{2.9}).

\end{rmk}

By the classical theory of cellular algebras as presented for example in \cite[Chapter 2]{Mat} we know that there exists a family of modules $\{\Delta(\la) \ | \ \la\in\Bip(\la)\}$ with $F$-basis
\[
\{\psi_{\TT}\ | \ \TT\in\mathsf{Std}(\la)\}
\]
called \textsf{cell} or \textsf{standard} modules and there is a unique bilinear form $\langle\cdot,\cdot\rangle\colon\Delta(\la)\times\Delta(\la)\lxr F$ such that $\langle\psi_{\SS},\psi_{\TT}\rangle$ for $\SS, \TT\in\mathsf{Std}(\la)$, is given by 
\[
\langle\psi_{\SS},\psi_{\TT}\rangle\psi_{\mathsf{u}\mathsf{v}} = \psi_{\mathsf{u}\SS}\psi_{\mathsf{t}\mathsf{v}} \mod \ \mathsf{B}_{d,\triangleright\la}^{\kappa}. 
\] 
The radical of a cell module $\Delta(\la)$ is given by
\[
\rad \ \Delta(\la) := \{x\in\Delta(\la) \ | \ \langle x,y\rangle = 0 \ \text{for all} \ y\in\Delta(\la)\}
\]
and let $L(\la) := \Delta(\la)/\rad \ \Delta(\la)$. By \cite[Section 9]{MaWo2}, the bilinear form is non-degenerate and so $\blob$ is quasi-hereditary with simples $\{L(\la) \ | \ \la\in\Bip(d)\}$.

\subsection{Graded decomposition numbers of the blob algebra}

As we discussed in the previous subsection the blob algebra $\blob$ is a $\mathbb{Z}$-graded algebra. Let $M$ be a finite dimensional graded $\blob$-module and let $M = \bigoplus_{i\in\mathbb{Z}}M_i$ be its decomposition into direct sum of homogeneous components. We define the graded dimension of $M$ to be the polynomial 
\[
\dim_{t}(M) := \sum_{i\in\mathbb{Z}}(\dim M_i)t^{i}\in\mathbb{Z}[t,t^{-1}]. 
\]
where $t$ is an indeterminate. Moreover if $L(\la)$ is a simple graded $\blob$-module then we denote by $L(\la)\langle k\rangle$ the graded $\blob$-module obtained by shifting the grading on $L(\la)$ up by $k$, namely 
\[
L(\la)\langle k\rangle =  \bigoplus_{i\in\mathbb{Z}}{L(\la)\langle k\rangle}_{i} = \bigoplus_{i\in\mathbb{Z}}L(\la)_{i-k}.
\]

The simple modules for the ungraded blob algebra were given by Martin and Woodcock \cite{MaWo2}. The following theorem summarises the work of Plaza and Ryom--Hansen using the Hu and Mathas' work on graded cellular structure of KLR algebras, \cite{PlaRH,HM}. 

\begin{thm}
Let $L(\la)$, $\la\in\Bip(d)$ be a simple module. Then 
\[
\{L(\la)\langle k\rangle \ | \ \la\in\Bip(d) \ \text{and} \ k\in\mathbb{Z}\}
\]
is a complete set of pairwise non-isomorphic simple graded $\blob$-modules.
\end{thm}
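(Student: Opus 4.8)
The plan is to bootstrap the graded classification from the ungraded one, which is already recorded above: the set $\{L(\la):\la\in\Bip(d)\}$ is a complete, irredundant list of the simple ungraded $\blob$-modules. The only extra ingredient needed is that $\blob$ is a \emph{graded} cellular algebra in the sense of Hu--Mathas, which is the content of Theorem~\ref{blob cellular}; everything else is the standard dictionary between graded and ungraded simple modules over a finite-dimensional $\mathbb{Z}$-graded algebra. The first point to check is that each $L(\la)$ is itself gradable. The cell module $\Delta(\la)$ carries the homogeneous basis $\{\psi_{\TT}:\TT\in\mathsf{Std}(\la)\}$ with $\deg(\psi_{\TT})=\deg(\TT)$, so it is a graded $\blob$-module; and from the defining formula for the cellular form $\langle\cdot,\cdot\rangle$ together with the additivity $\deg(\psi_{\SS\TT})=\deg(\SS)+\deg(\TT)$ of the cellular basis, one checks that $\langle\cdot,\cdot\rangle$ pairs the degree-$i$ part of $\Delta(\la)$ only against its degree-$(-i)$ part. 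Hence $\rad\Delta(\la)$ is a graded submodule, $L(\la)=\Delta(\la)/\rad\Delta(\la)$ inherits a grading, and (as the form is non-degenerate by \cite{MaWo2}) $L(\la)\langle k\rangle$ is a nonzero graded $\blob$-module for every $k\in\mathbb{Z}$.

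Next, each $L(\la)\langle k\rangle$ is simple as a graded module: $L(\la)$ is simple as an ungraded module, hence it has no proper nonzero submodules at all, in particular no proper nonzero graded ones, and shifting the grading preserves this. For non-redundancy, suppose $L(\la)\langle k\rangle\cong L(\mu)\langle l\rangle$ as graded $\blob$-modules. Forgetting the grading gives $L(\la)\cong L(\mu)$ as ungraded modules, so $\la=\mu$ by the ungraded classification; and then $L(\la)\langle k-l\rangle\cong L(\la)$ as graded modules, which forces $k=l$ on comparing the smallest degree in which these finite-dimensional nonzero modules are supported.

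It remains to prove completeness: every simple graded $\blob$-module $M$ is isomorphic to some $L(\la)\langle k\rangle$. Since $M$ is finite-dimensional and nonzero, it has a maximal ungraded submodule, so it has a simple ungraded quotient, which must be some $L(\la)$; in particular $\Hom_{\blob}(M,L(\la))\neq 0$. Because $M$ is a finitely generated graded module, the canonical map $\bigoplus_{k\in\mathbb{Z}}\Hom^{\mathrm{gr}}_{\blob}(M,L(\la)\langle k\rangle)\to\Hom_{\blob}(M,L(\la))$ is an isomorphism, so there is some $k\in\mathbb{Z}$ for which there is a nonzero graded homomorphism $f\colon M\to L(\la)\langle k\rangle$. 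Since both $M$ and $L(\la)\langle k\rangle$ are simple graded modules, $\ker f$ is a proper graded submodule of $M$ and is therefore $0$, while $\Im f$ is a nonzero graded submodule of $L(\la)\langle k\rangle$ and is therefore everything; hence $f$ is an isomorphism and $M\cong L(\la)\langle k\rangle$.

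The only step with genuine content is the gradability of the $L(\la)$, and this rests entirely on Theorem~\ref{blob cellular} being a \emph{graded} cellular datum --- homogeneous cellular basis with additive degrees and a homogeneous form --- together with the non-degeneracy of the forms from \cite{MaWo2}, which prevents any $L(\la)$ from collapsing to zero. Given that, the argument above is purely formal; it is exactly the specialisation to $\blob$ of the general theory of graded cellular algebras of Hu and Mathas \cite{HM}, so there is no real obstacle beyond assembling the already-cited results.
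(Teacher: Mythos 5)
Your proposal is correct, and it follows essentially the same route as the paper: the paper offers no independent argument for this theorem, deriving it directly from the graded cellular structure of $\blob$ (Plaza--Ryom-Hansen, Theorem \ref{blob cellular}) together with the general Hu--Mathas theory of graded cellular algebras and the ungraded classification of Martin--Woodcock. What you have written is precisely that standard specialisation, with the homogeneity of the cellular form (hence gradability of $L(\la)$) and the graded/ungraded Hom-space dictionary spelled out explicitly.
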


We also have the following useful proposition from \cite{HM}.

\begin{prop}\label{simples bar invariant}
For any $\la\in\Bip(d)$ the graded dimension of the simple module $L(\la)$ is bar-invariant (i.e., fixed under interchanging $t$ and $t^{-1}$).
\end{prop}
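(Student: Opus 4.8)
The plan is to deduce bar-invariance from graded self-duality of $L(\la)$: I will exhibit a non-degenerate, $\ast$-associative bilinear form on $L(\la)$ which is homogeneous of degree $0$, so that $L(\la)$ is isomorphic to its graded dual, and bar-invariance of $\dim_{t}L(\la)$ then falls out formally. As a preliminary I would record that the anti-involution $\ast$ of $\blob$ is degree-preserving: it fixes every (homogeneous) KLR generator, and the degree of $\psi_{r}\mathsf{e}(\ui)$ depends only on the unordered pair $\{i_{r},i_{r+1}\}$, so $\deg(\mathsf{e}(\ui)\psi_{r}) = \deg(\psi_{r}\mathsf{e}(s_{r}\ui)) = \deg(\psi_{r}\mathsf{e}(\ui))$; equivalently, in the notation of Theorem \ref{blob cellular}, $\deg\psi_{\SS\TT} = \deg\SS + \deg\TT = \deg\psi_{\TT\SS} = \deg\psi_{\SS\TT}^{\ast}$.

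The next step is to check that, for the grading on $\Delta(\la)$ in which $\psi_{\TT}$ has degree $\deg\TT$, the cellular form is homogeneous of degree $0$, i.e.\ $\langle\psi_{\SS},\psi_{\TT}\rangle \neq 0$ forces $\deg\SS + \deg\TT = 0$. This follows by comparing degrees on the two sides of the defining identity
\[
\langle\psi_{\SS},\psi_{\TT}\rangle\,\psi_{\mathsf{u}\mathsf{v}} = \psi_{\mathsf{u}\SS}\,\psi_{\TT\mathsf{v}} \pmod{\mathsf{B}_{d,\triangleright\la}^{\kappa}},
\]
using $\deg\psi_{\mathsf{u}\SS} = \deg\mathsf{u}+\deg\SS$, $\deg\psi_{\TT\mathsf{v}} = \deg\TT+\deg\mathsf{v}$ and $\deg\psi_{\mathsf{u}\mathsf{v}} = \deg\mathsf{u}+\deg\mathsf{v}$; that the form is not identically zero (indeed $\langle\psi_{\TT^{\la}},\psi_{\TT^{\la}}\rangle = 1$, with $\deg\TT^{\la} = 0$ since $w_{\TT^{\la}} = 1$ and hence $\psi_{\TT^{\la}} = \mathsf{e}(\ui^{\la})$) is exactly the non-degeneracy input recalled after Theorem \ref{blob cellular}. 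It follows that $\rad\Delta(\la)$ is a graded submodule, that $L(\la)$ is a graded $\blob$-module, and that the induced form on $L(\la)$ is non-degenerate, homogeneous of degree $0$, and $\ast$-associative in the sense $\langle ax,y\rangle = \langle x,a^{\ast}y\rangle$.

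From this I would conclude $L(\la)\cong L(\la)^{\circledast}$ as graded $\blob$-modules, where $L^{\circledast} := \bigoplus_{i\in\mathbb{Z}}\Hom_{F}(L_{-i},F)$ carries the $\ast$-twisted action $(a\cdot f)(x) := f(a^{\ast}x)$: the map $x\mapsto\langle x,-\rangle$ is a $\blob$-homomorphism by $\ast$-associativity, is degree-preserving because the form has degree $0$ (so $\langle x,-\rangle$ is supported on $L_{-i}$ when $x\in L_{i}$), and is bijective since the form is non-degenerate and $L(\la)$ is finite-dimensional. Finally, for any finite-dimensional graded module $M$ one has $\dim_{t}(M^{\circledast}) = \sum_{i}(\dim M_{-i})\,t^{i} = \overline{\dim_{t}(M)}$ (the polynomial obtained by interchanging $t$ and $t^{-1}$), so $\dim_{t}L(\la) = \overline{\dim_{t}L(\la)}$, which is the assertion.

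The argument is essentially routine once one has the graded cellular structure of $\blob$; the only point demanding a little care is the degree-$0$ homogeneity of the form on $\Delta(\la)$ in the second step — equivalently the normalisation $\deg\TT^{\la}=0$ — and this is precisely the situation handled for graded cellular algebras of KLR type in \cite{HM}, which one may alternatively just invoke.
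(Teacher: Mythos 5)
Your argument is correct: the paper's own proof is nothing but a citation of \cite[Proposition 1.8]{HM}, and what you have written out — degree-zero homogeneity of the cellular form on $\Delta(\la)$ (via degree comparison in the defining identity, using that the ideal $\mathsf{B}^{\kappa}_{d,\triangleright\la}$ is graded and $\ast$ preserves degrees), hence a graded radical, a nondegenerate degree-zero $\ast$-associative form on $L(\la)$, and the resulting graded self-duality $L(\la)\cong L(\la)^{\circledast}$ forcing $\dim_t L(\la)$ to be bar-invariant — is precisely the standard Hu--Mathas argument behind that citation. So you have in effect proved the cited result rather than merely invoking it, which is fine; as you note yourself, simply quoting \cite{HM} (as the paper does) is the shorter alternative.
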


\begin{proof}
See \cite[Proposition 1.8]{HM}.
\end{proof}

It is important to know the block structure of the blob algebra. In our case of study the block structure is controlled by a \textit{linkage property} with respect to the affine Weyl group $W_{\mathrm{aff}}$ of type $\hat{A}_1$. 

\begin{prop}[{\cite[Theorem 9.3]{MaWo2}}]
Let $\la, \mu\in\Bip(d)$. Two simple modules $L(\la)$, $L(\mu)$ are in the same block of $\blob$ if and only if $\la$ and $\mu$ are in the same orbit, i.e $\la\in W_{\mathrm{aff}}\cdot\mu$.
 \end{prop}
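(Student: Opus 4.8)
Since the cellular form on each cell module is non-degenerate, $\blob$ is quasi-hereditary (\cite[Section~9]{MaWo2}); in particular every standard module $\Delta(\la)$ has simple head $L(\la)$ and is therefore indecomposable. A standard argument then identifies the block equivalence relation on $\Bip(d)$ with the equivalence relation generated by $\la\leftrightarrow\mu$ whenever $[\Delta(\la):L(\mu)]\neq 0$: if $[\Delta(\la):L(\mu)]\neq 0$ then $L(\la)$ (the head) and $L(\mu)$ both occur in the indecomposable module $\Delta(\la)$, and conversely every indecomposable projective of a quasi-hereditary algebra has a $\Delta$-filtration. As $\sim$ is itself an equivalence relation, it suffices to establish the two inclusions:
\begin{enumerate}
\item[(a)] $[\Delta(\la):L(\mu)]\neq 0$ implies $\la\sim\mu$;
\item[(b)] the whole orbit $W_{\mathrm{aff}}\cdot\la$ lies in the block of $L(\la)$.
\end{enumerate}

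For (a) I would argue on residue sequences. The module $\Delta(\mu)$ is cyclically generated by the image $v_\mu$ of $\psi_{\TT^\mu\TT^\mu}$, which lies in $\mathsf{e}(\ui^\mu)\Delta(\mu)$ and is not contained in $\rad\,\Delta(\mu)$; hence $\mathsf{e}(\ui^\mu)$ acts non-trivially on $L(\mu)$. If $L(\mu)$ is a composition factor of $\Delta(\la)$, then $\mathsf{e}(\ui^\mu)\Delta(\la)\neq 0$, so by the cellular basis of Theorem~\ref{blob cellular} there is $\TT\in\Std(\la)$ with $\res(\TT)=\ui^\mu=\res(\TT^\mu)$. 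By the equivalence (\ref{residues and reflections}), the path $\TT$ is obtained from $\TT^\mu$ by a sequence of reflections in walls; since such reflections act on the endpoint of a path through the corresponding element of $W_{\mathrm{aff}}$, the endpoints satisfy $\la\in W_{\mathrm{aff}}\cdot\mu$, that is $\la\sim\mu$.

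For (b), list the orbit $W_{\mathrm{aff}}\cdot\la$ in order of increasing distance from the origin of the Pascal triangle; using the length function $\ell$, together with the fact that one bipartition is less dominant than another precisely when they are linked and lie further from the origin, this orbit forms a single $\trianglelefteq$-chain $\nu^{(0)}\gdom\nu^{(1)}\gdom\nu^{(2)}\gdom\cdots$. It is enough to show that each consecutive pair $\nu^{(i)},\nu^{(i+1)}$ lies in one block. For this one invokes a non-zero homomorphism between the cell modules $\Delta(\nu^{(i)})$ and $\Delta(\nu^{(i+1)})$ --- the ``one-column homomorphism'' constructed in Section~3 out of the Garnir presentation of the cell modules, or, alternatively, the non-vanishing of the corresponding entry of Plaza's graded decomposition matrix \cite{Pla}. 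Whichever way such a homomorphism points, its image is a non-zero quotient of one of the two cell modules, hence has that module's simple head among its composition factors, and it lies inside the other cell module; thus the corresponding decomposition number is non-zero and $L(\nu^{(i)}), L(\nu^{(i+1)})$ share a block.

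The formal half (a) is immediate from the graded cellular structure and the path/reflection dictionary (\ref{residues and reflections}). The real content --- and the step I expect to be the main obstacle --- is the connectivity in (b): that consecutive bipartitions of a single $W_{\mathrm{aff}}$-orbit are joined by a non-zero cell-module homomorphism, equivalently that the corresponding decomposition number is non-zero. In the present paper that input is exactly what the constructions of Section~3 (or Plaza's decomposition numbers) supply; in Martin and Woodcock's original treatment it is obtained instead via globalisation/localisation functors and induction on $d$.
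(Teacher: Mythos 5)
Your argument is essentially correct, but note that the paper offers no proof of this proposition at all: it is quoted from Martin--Woodcock \cite{MaWo2}, whose original argument runs through localisation/globalisation functors and induction on $d$, exactly as you surmise at the end. What you have written is therefore a genuinely different, self-contained derivation from the paper's own toolkit, and it hangs together. Part (a) is sound: since $\mathsf{e}(\ui)\psi_{\TT}=\delta_{\ui,\res(\TT)}\psi_{\TT}$ for the Plaza--Ryom-Hansen basis, a shared composition factor forces some $\TT\in\Std(\la)$ with $\res(\TT)=\ui^{\mu}$, and the dictionary (\ref{residues and reflections}) converts equality of residue sequences into a chain of wall reflections, which act on the endpoints of the paths through $W_{\mathrm{aff}}$, giving $\la\in W_{\mathrm{aff}}\cdot\mu$. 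For part (b) you should commit to the one-column homomorphisms of Section~3 rather than Plaza's decomposition numbers: the latter are established only in characteristic zero and their derivation builds on Martin--Woodcock's analysis, so that alternative is both weaker and at risk of circularity, whereas Propositions \ref{proof homs}, \ref{image of lapr} and \ref{image of la} are characteristic-free, use nothing beyond the Garnir presentation of the cell modules, and are logically independent of the block statement. One small inaccuracy: for $|\ell|\geq 1$ an orbit in the alcove case contains two bipartitions at each level, $\nu_i$ and $\nu_i^{\prime}$, so it is not literally a single $\trianglelefteq$-chain; this is harmless, since the diamond of maps $\varphi_{\nu_{i+1}}^{\nu_i},\varphi_{\nu_{i+1}^{\prime}}^{\nu_i},\varphi_{\nu_{i+1}}^{\nu_i^{\prime}},\varphi_{\nu_{i+1}^{\prime}}^{\nu_i^{\prime}}$ (the same maps that feed the differentials of Section~5, together with the single map in the hyperplane case) connects every member of the orbit to $\la$, and a nonzero map $\Delta(\nu)\to\Delta(\mu)$ places $L(\nu)$ among the composition factors of $\Delta(\mu)$ exactly as you say. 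The trade-off between the two routes is that Martin--Woodcock argue directly with the diagrammatic blob algebra and its idempotent truncations for all admissible parameters, while your argument buys a proof internal to the graded KLR presentation used in this paper, valid in arbitrary characteristic once one grants quasi-heredity (non-degeneracy of the cellular form), which is the one ingredient still imported from \cite{MaWo2}.
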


For a graded $\blob$-module $M$ we denote by $[M\colon L(\la)\langle k\rangle]$ the graded multiplicity of the simple module $L(\la)\langle k\rangle$ as a graded composition factor of $M$. Then the graded decomposition number is
\[
[M\colon L(\la)]_{t} := \sum_{k\in\mathbb{Z}}[M\colon L(\la)\langle k\rangle]t^{k}\in\mathbb{Z}[t,t^{-1}].
\]
In particular we are interested in the decomposition matrix $\mathbf{D} = (d_{\mu\la})_{\mu,\la\in\Bip(d)}$, that is the decomposition numbers 
\[
d_{\mu\la} = [\Delta(\mu)\colon L(\la)]_t
\]
which were computed, over a field $F$ of characteristic zero, by Plaza \cite{Pla}. The closed formula for the graded decomposition number $[\Delta(\mu)\colon L(\la)]_t$ depends on whether the bipartition $\la$ lies in an alcove or on a hyperplane. However using the length function we defined before we can amalgamate the two distinct formulas into one. In what follows we assume that $\mu\trianglerighteq \la$, since this is the only case we can have a non-zero decomposition number, by Theorem \ref{blob cellular}. The following theorem gives the graded decomposition numbers of the blob algebra.   

\begin{thm}[{\cite[Theorem 5.11, 5.15]{Pla}}]\label{gr dec numbers}
 Let $\la,\mu\in\Bip(d)$ be two linked bipartitions with $\la\trianglelefteq\mu$. Then
\[
[\Delta(\mu)\colon L(\la)]_t = t^{|\ell(\la)|-|\ell(\mu)|}.
\]
\end{thm}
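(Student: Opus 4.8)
This is \cite[Theorems~5.11 and 5.15]{Pla}; here I describe the strategy I would follow to establish it from the structures assembled above. The two cited results treat the case $\la$ in an alcove and the case $\la$ on a wall separately, and the role of the length function $\ell$ is precisely to book-keep both at once, collapsing the two formulas into the single expression $t^{|\ell(\la)|-|\ell(\mu)|}$. The plan is to pin down the whole graded decomposition matrix $\mathbf{D}(t)=\bigl(d_{\mu\la}(t)\bigr)$ by a Kazhdan--Lusztig/Lusztig-lemma argument, that is, by combining triangularity with the bar-invariance of Proposition~\ref{simples bar invariant}.

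First I would record the structural constraints. By Theorem~\ref{blob cellular} together with the linkage principle \cite[Theorem~9.3]{MaWo2}, $\mathbf{D}(t)$ is block-diagonal with blocks indexed by the $W_{\mathrm{aff}}$-orbits, and on each block it is unitriangular for the dominance order: $d_{\mu\mu}(t)=1$ and $d_{\mu\la}(t)=0$ unless $\la\trianglelefteq\mu$. Moreover its off-diagonal entries lie in $t\mathbb{Z}_{\geq 0}[t]$ --- the standard triangularity of the graded decomposition matrix of a graded cellular algebra arising from a cyclotomic KLR quotient, the absence of a constant term reflecting the self-duality of the simples that underlies Proposition~\ref{simples bar invariant}. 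Thus $\mathbf{D}(t)$ is a unitriangular matrix, off-diagonal entries in $t\mathbb{Z}[t]$, such that $\mathbf{D}(t)^{-1}$ applied to the vector of graded cell-module dimensions produces bar-invariant Laurent polynomials (the $\dim_t L(\la)$). By Lusztig's lemma such a matrix is unique, so it suffices to exhibit one candidate with these properties. To that end I would compute $\dim_t\Delta(\mu)$ from the path model: using the bijection $\mathsf{Std}(\mu)\leftrightarrow\mathsf{Path}(\mu)$ and Plaza's degree-via-paths identity \cite[Corollary~4.6]{Pla},
\[
\dim_t\Delta(\mu)=\sum_{\stt\in\mathsf{Path}(\mu)}t^{\deg(\stt)},
\]
where $\deg(\stt)$ is the sum of the local contributions of \eqref{degree of a pair} (a step scores $+1$ when it leaves a wall towards the origin, $-1$ when it lands on a wall from the far side). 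This is a weighted lattice-path count in the Pascal triangle and has a closed evaluation, of slightly different shape according to whether the endpoint $\mu$ lies in an alcove or on a wall.

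Then I would verify the candidate $P(t):=\bigl(t^{|\ell(\la)|-|\ell(\mu)|}\bigr)_{\la\trianglelefteq\mu}$. It is unitriangular with off-diagonal entries in $t\mathbb{Z}[t]$, since for linked bipartitions $|\ell(\la)|\geq|\ell(\mu)|$ with equality only when $\la=\mu$ (recall that, within a block, $\la\triangleleft\mu$ is equivalent to $|\ell(\la)|>|\ell(\mu)|$). For the remaining condition I would invert $P(t)$ explicitly: on a single $\widehat A_1$-orbit its rows and columns can be reindexed by $|\ell|$, where $P(t)$ becomes banded and $P(t)^{-1}$ is the evident alternating matrix, with entries $0$ and $\pm t^{(\,\cdot\,)}$ in a predictable pattern (the same sign pattern that later reappears as the differentials of the BGG-type complexes). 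Substituting the closed formula for $\dim_t\Delta(\mu)$, the entries of $P(t)^{-1}\bigl(\dim_t\Delta(\cdot)\bigr)$ telescope into a manifestly bar-invariant path generating function --- exactly the sum $\sum t^{\deg(\stt)}$ over the paths indexing the basis of $L(\la)$ in Theorem~A. Hence $P(t)$ has all the defining properties of $\mathbf{D}(t)$, so by uniqueness $\mathbf{D}(t)=P(t)$, which is the claim. (Equivalently one may phrase the last step as a descending induction on each block: knowing $d_{\nu\la}(t)$ for all $\nu$ strictly less dominant than $\mu$, the identity $\dim_t\Delta(\mu)=\sum_\la d_{\mu\la}(t)\dim_t L(\la)$ together with bar-invariance of the $\dim_t L(\la)$ determines $d_{\mu\la}(t)$ uniquely, and one checks the answer is $t^{|\ell(\la)|-|\ell(\mu)|}$.)

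I expect the main obstacle to be the combinatorial core: producing the closed form for the weighted path sum $\dim_t\Delta(\mu)$, and then checking that the cancellation after applying $P(t)^{-1}$ really collapses everything onto the correct bar-invariant expression --- above all, organising this so that the alcove case and the wall case are handled uniformly through $\ell$ rather than as two unrelated computations. The triangularity input, the reduction through Lusztig's lemma, and the appeal to Proposition~\ref{simples bar invariant} are formal by comparison.
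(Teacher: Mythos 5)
First, a point of comparison: the paper does not prove Theorem \ref{gr dec numbers} at all --- it is imported verbatim from \cite[Theorems 5.11, 5.15]{Pla} --- so there is no in-paper argument to measure your sketch against; it has to stand on its own (or against Plaza's proof). Your overall shape --- characterise $\mathbf{D}(t)$ by unitriangularity, strictly positive degree shifts off the diagonal, and bar-invariance of the simple characters, then verify the candidate matrix $P(t)$ --- is a recognised way of identifying such Kazhdan--Lusztig-type answers, but as written it has genuine gaps. The first is your claim that the off-diagonal entries lie in $t\mathbb{Z}_{\geq 0}[t]$ ``reflecting the self-duality of the simples'': self-duality does not give this. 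An ungraded non-semisimple cellular algebra placed entirely in degree zero has bar-invariant simples and nonzero \emph{constant} off-diagonal decomposition numbers, so the no-negative-power, no-constant-term property is a substantive algebra-specific input that must be proved or properly sourced (in characteristic zero it is available for cyclotomic KLR quotients, or in the blob case it can be extracted from the multiplicity-one ungraded decomposition numbers of \cite{MaWo2}, \cite{RH} together with a degree argument); it is part of what \cite{Pla} actually has to establish.

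Second, and more seriously, you invoke ``Lusztig's lemma'' at the level of graded dimension polynomials, where the uniqueness you need can fail: if the Laurent polynomials $\dim_t L(\la)$ for $\la\triangleleft\mu$ are linearly dependent over $\mathbb{Z}[t,t^{-1}]$ (nothing in your argument rules this out --- the two weights at equal $|\ell|$ in a block are the obvious suspects), then two different unitriangular matrices with off-diagonal entries in $t\mathbb{Z}[t]$ can carry the vector of $\dim_t\Delta(\mu)$'s to vectors of bar-invariant polynomials. Lusztig's lemma requires a free $\mathbb{Z}[t,t^{-1}]$-module with a distinguished basis and a bar involution acting unitriangularly on that basis; concretely this means working in the graded Grothendieck group with the contragredient duality and controlling the expansion of the dual of $\Delta(\mu)$ in the classes $[\Delta(\nu)]$ --- an input your plan never supplies and which is close to the content being proved. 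Third, the computational core --- the closed evaluation of $\dim_t\Delta(\mu)$ as a weighted path sum and the telescoping check that $P(t)^{-1}$ applied to it is bar-invariant --- is exactly what you defer, and it amounts to already knowing the character of $L(\la)$, i.e.\ Theorem A, which this paper deduces \emph{from} Theorem \ref{gr dec numbers}; so the proposal is a plan whose hardest steps are outstanding rather than a proof. A leaner route in this specific case is to quote the multiplicity-one ungraded decomposition numbers, so that each nonzero graded entry is a single monomial $t^{k}$, and then certify $k=|\ell(\la)|-|\ell(\mu)|$ by exhibiting a nonzero homomorphism $\Delta(\la)\to\Delta(\mu)$ of the appropriate degree, for instance by composing the degree-one one-column homomorphisms of Section \ref{Section 3} (cf.\ Proposition \ref{comp of one column homs}), whose images are explicit basis elements; this avoids both the positivity input and the uniqueness issue.
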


\begin{eg}
We continue with the example \ref{example linkage principle} and let $\la_1 = ((1^2),(1^7))$, $\la_2 = ((1^5),(1^4))$, $\la_3 = ((1^6),(1^3))$. According to Theorem \ref{gr dec numbers}, the (non-zero) graded decomposition numbers are the following:
\[
[\Delta(\la)\colon L(\la)] =  1, \ [\Delta(\la_1)\colon L(\la)] = t, \ [\Delta(\la_2)\colon L(\la)] = t^2 \ \text{and} \ [\Delta(\la_3)\colon L(\la)] = t.
\]
\end{eg}

\subsection{Garnir relations for the blob algebra}
In this subsection we provide a presentation for the cell modules of $\blob$. In particular we derive Garnir relations for the blob algebra. The above, apart from its importance on the structure of the cell modules, will be important in the next section where we wish to construct homomorphisms between cell modules. 

Recall from \cite{BrKlW} the Bruhat order $\le$ on $\mathfrak{S}_d$: for $u, w\in\mathfrak{S}_d$ we say that $u \le w$ if and only if some reduced expression of $u$ is a subexpression of  some reduced expression of $w$. By \cite[Theorem 5.10]{Hum} if $\underline{w}$ is a fixed, but arbitrary reduced expression for $w$, then $u\le w$  if and only if $u$ can be obtained as a subexpression of this reduced expression. We can connect the Bruhat order on permutations with the dominance order on tableaux.

\begin{lem}\label{Bruhat on tab}
Let $\la\in\Bip(d)$ and $\TT, \SS$ be two $\la$-tableaux with $w_\TT$, $w_\SS\in\mathfrak{S}_d$ be the unique permutations such that $w_{\TT}\TT^{\la} = \TT$ and $w_{\SS}\TT^{\la} = \SS$. Then
\[
\SS\trianglelefteq \TT \Longleftrightarrow w_\TT \le w_\SS.
\]
\end{lem}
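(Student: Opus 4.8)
The plan is to deduce the equivalence from an analysis of a single ``elementary move'' $\TT\mapsto s_r\TT$, bootstrapped via the subword characterisation of the Bruhat order recalled just above the lemma. First I would fix the dictionary between the two sides. Since $\la=((1^{\la_1}),(1^{\la_2}))$ is one-column, a standard $\la$-tableau is determined by the set of entries occupying its first component, so $s_r\TT$ is again a standard $\la$-tableau exactly when $r$ and $r+1$ lie in different components of $\TT$; in that case $w_{s_r\TT}=s_rw_\TT$ and $\mathsf{L}(w_{s_r\TT})=\mathsf{L}(w_\TT)\pm1$. Moreover $\Shape(\TT{\downarrow_{\{1,\cdots,k\}}})=\Shape(s_r\TT{\downarrow_{\{1,\cdots,k\}}})$ for $k\neq r$, while at $k=r$ the two restricted shapes differ by moving one box between the components.

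The key local statement is then: comparing these two shapes in the dominance order on bipartitions, one of $\TT,s_r\TT$ is (weakly) less dominant than the other, and an inspection of the positions of $r,r+1$ — equivalently of whether $w_\TT^{-1}(r)<w_\TT^{-1}(r+1)$ — shows that the less dominant member of the pair is exactly the one whose permutation is longer. As $\TT^{\la}$ is dominance-maximal and $w_{\TT^{\la}}=1$ is Bruhat-minimal, this says that the dominance order on $\la$-tableaux and the Bruhat order on the $w_\TT$ are oriented oppositely by every elementary move, which is the content of the lemma.

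For ``$\SS\trianglelefteq\TT\Rightarrow w_\TT\leq w_\SS$'' I would induct on $\mathsf{L}(w_\SS)$; if $\mathsf{L}(w_\SS)=0$ then $\SS=\TT^{\la}$ forces $\TT=\SS$. If $\SS\neq\TT$, then since $\res(\SS)=\res(\TT)$ (part of $\SS\trianglelefteq\TT$) and by the reflection description \eqref{residues and reflections}, $\SS$ is connected to $\TT$ by a chain of elementary moves through tableaux that are $\trianglerighteq\TT$; the first such move gives $r$ with $\SS':=s_r\SS\trianglerighteq\TT$ strictly more dominant than $\SS$, hence $\mathsf{L}(w_{\SS'})=\mathsf{L}(w_\SS)-1$ by the local statement. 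The inductive hypothesis gives $w_\TT\leq w_{\SS'}=s_rw_\SS<w_\SS$, so $w_\TT\leq w_\SS$. For the converse I would fix a reduced word $w_\SS=s_{r_1}\cdots s_{r_\ell}$ and, using the subword characterisation, a reduced subword equal to $w_\TT$; applying the corresponding elementary moves to $\TT^{\la}$ (reading from the right) yields a chain $\TT^{\la}=\SS_0,\dots,\SS_\ell=\SS$ along which $\mathsf{L}(w_{\SS_i})$ strictly increases, hence along which dominance strictly decreases, while the retained letters cut out a strictly dominance-decreasing subchain from $\TT^{\la}$ to $\TT$; comparing the two chains step by step gives $\SS\trianglelefteq\TT$. (Equivalently, one can run the same induction, deleting the last letter and splitting on whether it is retained in the subword.)

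The step I expect to be the main obstacle is the existence claim in the forward implication: given $\SS\triangleleft\TT$ with $\SS\neq\TT$, producing a single elementary move that strictly increases the dominance of $\SS$ and keeps it $\trianglerighteq\TT$. This is an exchange-type property of the dominance order; the natural argument looks at the first index $k$ at which the paths $\SS$ and $\TT$ diverge and uses the geometry of the hyperplanes together with \eqref{residues and reflections} to locate the required transposition, the awkward cases being those where the divergence occurs exactly on a wall — which is also where the hypothesis $\res(\SS)=\res(\TT)$ is genuinely needed. The sign bookkeeping in the local statement (matching ``a box moves nearer to or further from the origin'' with an ascent or descent of $w_\TT$) is routine but must be carried out with respect to the alcove geometry rather than the naive balanced line $\la_1=\la_2$.
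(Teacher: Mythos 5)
Your write-up should first be set against what the paper actually does: the paper's ``proof'' of Lemma \ref{Bruhat on tab} consists of the single remark that $\TT=w_{\TT}\TT^{\la}$ and $\SS=w_{\SS}\TT^{\la}$, so your induction is an attempt to supply the suppressed argument (the classical Ehresmann/Mathas-style comparison of dominance and Bruhat order, transported to one-column bipartitions), not a variant of an argument the paper gives. Your local analysis of a single move $\TT\mapsto s_r\TT$ is essentially right, with one caveat that matters later: if the common shape at level $r-1$ is balanced, the two level-$r$ shapes $((1^{a+1}),(1^{a}))$ and $((1^{a}),(1^{a+1}))$ are $\preceq$-equivalent, since the order on bipartitions only sees $|\la_1-\la_2|$. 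Hence ``length goes up'' only yields ``dominance weakly decreases'', not strictly. Inside a fixed residue class such ties cannot occur (the two addable nodes then have residues $\kappa_1-a$ and $\kappa_2-a$ with $0<|\kappa_1-\kappa_2|<e$), but your converse argument does not stay inside a residue class, so it cannot invoke this.

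The genuine gaps are these. In the forward direction, the exchange step --- given $\SS\triangleleft\TT$, produce one standard elementary move $s_r\SS$ that is strictly more dominant than $\SS$ and still $\trianglerighteq\TT$ --- is the entire content of the lemma, and you leave it open; relation (\ref{residues and reflections}) does not readily supply it, because wall reflections are long products of simple transpositions whose intermediate single swaps leave the residue class, so ``$\SS$ is connected to $\TT$ by elementary moves through tableaux $\trianglerighteq\TT$'' is itself an unproved exchange property, not a consequence of (\ref{residues and reflections}). In the converse direction the ``two chains'' argument fails as stated: the intermediate tableaux obtained by applying an arbitrary reduced word of $w_{\SS}$ to $\TT^{\la}$ need not be standard (so $\Shape(\cdot{\downarrow_{\{1,\cdots,k\}}})$ need not even be a bipartition and your local statement does not apply), the inference ``$\mathsf{L}$ strictly increases, hence dominance strictly decreases'' is false at balanced levels, and the step-by-step comparison of the full chain with the subword chain is unjustified, since dominance comparisons are not preserved under simultaneous left multiplication by a simple transposition. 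Finally, since $\trianglelefteq$ in Definition \ref{order on tableaux} includes $\res(\SS)=\res(\TT)$, no argument can deduce $\SS\trianglelefteq\TT$ from $w_{\TT}\le w_{\SS}$ alone (take $\TT=\TT^{\la}$ and $\SS=s_r\TT^{\la}$ for a cross-component swap at a balanced level: the residue sequences differ); at best the converse yields the shape inequalities $\Shape(\SS{\downarrow_{\{1,\cdots,k\}}})\preceq\Shape(\TT{\downarrow_{\{1,\cdots,k\}}})$, and the clean route to those is the deletion/lifting induction on a reduced word that you mention only parenthetically --- that, together with a proved exchange step, is what an actual proof would have to consist of.
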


\begin{proof}
The result is straightforward from the fact that $\TT = w_{\TT}\TT^{\la}$ and $\SS = w_{\SS}\TT^{\la}$. 
\end{proof}

\begin{defin}
Let $\la\in\Bip(d)$ and $A = (r,1,m)\in[\la]$ be a node of the diagram of $\la$. The node $A$ is called \textsf{Garnir node} if it is not removable.
\end{defin}

Suppose that $A = (r,1,m)\in[\la]$ is a Garnir node and let $u := \TT^{\la}(r,1,m)$ and $v := \TT^{\la}(r+1,1,m)$. It is clear from the definition of $\TT^{\la}$ that- for $(r,1,m)$ be Garnir node- there are two distinct cases for $u$ and $v$. In particular it will either be $v = u + 1$ or $v = u + 2$ and recall that $[u,v] := \{t\in\mathbb{Z} \ | \ u\le t\le v \}$. The \textsf{Garnir belt} $B^{A}$ is a set consisting of the nodes $(\TT^{\la})^{-1}(k)$ for $k\in [u,v]$.

\begin{defin}\label{Gardef}For a Garnir node $A = (r,1,m)\in[\la]$ we define a \textsf{Garnir tableau} $\mathsf{G}^A$ associated to $A$ to be the $\la$-tableau which:
\begin{itemize}
 \item coincides with $\TT^{\la}$ outside the Garnir belt $B^{A}$;
 \item has the numbers of the set $[u,v]$ in the remaining nodes according to the following rules.
\begin{enumerate}
\item If $v = u + 1$, then $\mathsf{G}^{A}$ has the numbers $u, u+1$ from the bottom to the top in the $m$th column;
\item if $v = u + 2$, then $\mathsf{G}^{A}$ has the entries $u, u+1, u+2$ from the bottom to the top in both components, first by filling one of the components and then by filling the other.
\end{enumerate}
\end{itemize}
\end{defin}

Note that we define \textit{a} Garnir tableau associated to $A$ rather that \textit{the} Garnir tableau, as the above definition does not always give a unique tableau. In the following remark we will clarify this point and we shall give a more concrete description of the Garnir tableaux.

\begin{rmk}\label{Garrmk}
Let $A = (r,1,m)$ be a Garnir node and $B^{A}$ be the Garnir belt associated to $A$.

When $v = u + 1$ there is a unique Garnir tableau $\mathsf{G}^{A}$, since there is a unique way of placing the numbers $u, u+1$. In particular, the tableau $\mathsf{G}^{A}$ is the tableau 
\begin{equation}\label{Garnir tab}
\mathsf{G}^{A} = s_{u}\TT^{\la}.
\end{equation}

When $v = u + 2$ there are two choices of Garnir tableaux. The first choice comes from filling the first component first and then the second component, while the second choice comes from filling the second component first and then the first component.  In particular, the two different Garnir tableaux are
\begin{equation}\label{Garntab1}
\mathsf{G}^{A}  = 
\begin{cases}
s_{u}s_{u+1}\TT^{\la} \\
s_{u+1}s_{u}\TT^{\la}.
\end{cases}
\end{equation}

%\begin{equation}\label{Garnir tableau 1}
%\mathsf{G}_{1}^{A} = s_{u}s_{u+1}\TT^{\la} \ \text{and} \ \mathsf{G}_{2}^{A} = s_{u+1}s_{u}\TT^{\la}, \ \text{if} \ m = 1
%\end{equation}
%and
%\begin{equation}\label{Garnir tableau 2}
%\mathsf{G}_{1}^{A} = s_{u+1}s_{u}\TT^{\la} \ \text{and} \ \mathsf{G}_{2}^{A} = s_{u}s_{u+1}\TT^{\la}, \ \text{if} \ m = 2.
%\end{equation}
%When we are not interested in distinguishing between (\ref{Garnir tab}), (\ref{Garnir tableau 1}) and (\ref{Garnir tableau 2}) we shall write $\mathsf{G}^{A}_{\star}$ for the Garnir tableau of the Garnir node $A$.
\end{rmk}

The following example aims to clear the concept of the Garnir tableaux discussed in Definition \ref{Gardef} and Remark \ref{Garrmk}.

\begin{eg}
Let $d = 12$, $\kappa = (0,2)$ and $e = 4$. We consider the bipartition $\la = ((1^4),(1^8))\in\Bip(12)$ and the nodes $A = (2,1,1)$, $B = (6,1,2)\in[\la]$ which are Garnir nodes. The Garnir tableaux associated to $A$ and $B$ are the following non-standard tableaux

$$
\Yvcentermath1
\Ylinethick{0.8pt}
\mathsf{G}_{1}^{A} = \left( \, \young(2,!\blue5,4,!\wht8), \ \  \\
\young(1,3,!\blue6,!\wht7,9,\rten,\releven,\rtwelve) \, \right),
\Yvcentermath1
\Ylinethick{0.8pt}
\mathsf{G}_{2}^{A} = \left( \, \young(2,!\blue6,5,!\wht8), \ \  \\
\young(1,3,!\blue4,!\wht7,9,\rten,\releven,\rtwelve) \, \right) \ \text{and} \ 
\Yvcentermath1
\Ylinethick{0.8pt}
\mathsf{G}^{B} = \left( \, \young(2,4,6,8), \ \  \\
\young(1,3,5,7,9,!\blue\releven,\rten,!\wht\rtwelve) \, \right) 
$$
\end{eg}

where the nodes shaded in blue are the Garnir belts of each Garnir tableau. As expected, there are two distinct Garnir tableaux associated to the node $A$ and one unique Garnir tableau associated to $B$. One can easily check that $\mathsf{G}_{1}^{A} = s_{4}s_{5}\TT^{\la}$, $\mathsf{G}_{1}^{A} = s_{5}s_{4}\TT^{\la}$ and $\mathsf{G}^{B} = s_{10}\TT^{\la}$, as described in Remark \ref{Garrmk}.

In order to make the notation simpler we introduce the notion of the left and right exposed transposition. Let $\TT\in\Std(d)$ with reduced expression $w_{\TT} = s_{i_1}\cdots s_{i_k}$. A simple transposition $s_r$ is called \textsf{left exposed} (resp. \textsf{right exposed}) if $s_{r} = s_{i_j}$ for some $j\in\{1,\cdots,k\}$ and $s_r$ commutes with $s_{i_l}$ for all $l<j$ (resp. $l>j$).

\begin{lem}\label{non-standard factors}
Let $\la\in\Bip(d)$ and $\TT\not\in\Std(d)$. Suppose that $A = (r,1,m)\in[\la]$ is a node such that $\TT(r,1,m) > \TT(r+1,1,m)$. Then there exists $w\in\mathfrak{S}_{d}$ such that $\TT = w\mathsf{G}^{A^{\prime}}$ for some Garnir node $A^{\prime}\in[\la]$ and some Garnir tableau $\mathsf{G}^{A^{\prime}}$ and $\mathsf{L}(w_{\TT}) = \mathsf{L}(w) + \mathsf{L}(w_{\mathsf{G}^{A^{\prime}}})$. Conversely, if $\TT = w\mathsf{G}^{A^{\prime}}$ with $\mathsf{L}(w_{\TT}) = \mathsf{L}(w) + \mathsf{L}(w_{\mathsf{G}^{A^{\prime}}})$ then $\TT\not\in\Std(\la)$.
\end{lem}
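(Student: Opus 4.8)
The plan is to convert the combinatorial condition ``column $m$ of $\TT$ has a descent at rows $r,r+1$'' into the group-theoretic condition ``$w_{\TT}$ has a suitable right descent'', using the identity $\TT(x)=w_{\TT}(\TT^{\la}(x))$ for all $x\in[\la]$, which is immediate from $w_{\TT}\TT^{\la}=\TT$ together with the action of $\mathfrak{S}_d$ on tableaux by permuting entries. I will also use that $\TT^{\la}$ is standard, hence increasing down each column, and that for a Garnir node $A=(r,1,m)$, writing $u:=\TT^{\la}(r,1,m)$ and $v:=\TT^{\la}(r+1,1,m)$, one has $v\in\{u+1,u+2\}$, with $\mathsf{G}^{A}=s_u\TT^{\la}$ when $v=u+1$ and $\mathsf{G}^{A}\in\{s_us_{u+1}\TT^{\la},\, s_{u+1}s_u\TT^{\la}\}$ when $v=u+2$ (Remark~\ref{Garrmk}).

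For the forward assertion I would take $A':=A$. Since $(r+1,1,m)\in[\la]$, the node $A$ is not removable, hence is a Garnir node, so $v\in\{u+1,u+2\}$, and the hypothesis $\TT(r,1,m)>\TT(r+1,1,m)$ reads exactly $w_{\TT}(u)>w_{\TT}(v)$. If $v=u+1$, then $s_u$ is a right descent of $w_{\TT}$; putting $w:=w_{\TT}s_u$ gives $\mathsf{L}(w_{\TT})=\mathsf{L}(w)+1=\mathsf{L}(w)+\mathsf{L}(w_{\mathsf{G}^{A}})$ and $w\mathsf{G}^{A}=w_{\TT}s_us_u\TT^{\la}=\TT$. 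If $v=u+2$, then $w_{\TT}(u)>w_{\TT}(u+2)$ forces at least one of $s_u,s_{u+1}$ to be a right descent of $w_{\TT}$. In the first case $(w_{\TT}s_u)(u+1)=w_{\TT}(u)>w_{\TT}(u+2)=(w_{\TT}s_u)(u+2)$, so $s_{u+1}$ is a right descent of $w_{\TT}s_u$, and I put $w:=w_{\TT}s_us_{u+1}$ and $\mathsf{G}^{A}:=s_{u+1}s_u\TT^{\la}$. In the second case $w_{\TT}(u)<w_{\TT}(u+1)$, whence $w_{\TT}(u+1)>w_{\TT}(u)>w_{\TT}(u+2)$, so $s_{u+1}$ is a right descent of $w_{\TT}$ and, similarly, $s_u$ is a right descent of $w_{\TT}s_{u+1}$, and I put $w:=w_{\TT}s_{u+1}s_u$ and $\mathsf{G}^{A}:=s_us_{u+1}\TT^{\la}$. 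In either subcase $\mathsf{L}(w_{\TT})=\mathsf{L}(w)+2=\mathsf{L}(w)+\mathsf{L}(w_{\mathsf{G}^{A}})$, and $w\mathsf{G}^{A}=\TT$ after cancelling the trailing reflections.

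For the converse, given $\TT=w\mathsf{G}^{A'}$ with $\mathsf{L}(w_{\TT})=\mathsf{L}(w)+\mathsf{L}(w_{\mathsf{G}^{A'}})$, uniqueness of the word of a tableau gives $w_{\TT}=w\,w_{\mathsf{G}^{A'}}$; writing $A'=(r',1,m')$, $u:=\TT^{\la}(r',1,m')$ and $v:=\TT^{\la}(r'+1,1,m')\in\{u+1,u+2\}$, a reduced word for $w_{\TT}$ then ends in a reduced word for $w_{\mathsf{G}^{A'}}\in\{s_u,\,s_us_{u+1},\,s_{u+1}s_u\}$. Peeling off those one or two trailing reflections and reading off the right descents that appear (the same computations as above, run in reverse) yields $w_{\TT}(u)>w_{\TT}(v)$ in every case; since $\TT(r',1,m')=w_{\TT}(u)$ and $\TT(r'+1,1,m')=w_{\TT}(v)$, the tableau $\TT$ has a descent down column $m'$, so $\TT\notin\Std(\la)$.

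The one genuinely fiddly point I expect is the $v=u+2$ bookkeeping: the Garnir belt then contains a middle node, lying in the other component, whose entry $c:=w_{\TT}(u+1)$ is unconstrained relative to $\{w_{\TT}(u),w_{\TT}(u+2)\}$, so one must case-split on which of $s_u,s_{u+1}$ is the first trailing reflection to remove and accordingly pick the Garnir tableau ($s_{u+1}s_u\TT^{\la}$ versus $s_us_{u+1}\TT^{\la}$) whose trailing product cancels to recover $\TT$; keeping the convention $\TT(x)=w_{\TT}(\TT^{\la}(x))$ straight is what makes the two peelings match the asserted length additivity. Everything else is the standard dictionary between column descents of a tableau and right descents of its permutation.
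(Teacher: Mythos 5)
Your argument is correct, but it follows a genuinely different route from the paper's. You fix $A'=A$, translate the column descent $\TT(r,1,m)>\TT(r+1,1,m)$ into $w_{\TT}(u)>w_{\TT}(v)$ via $\TT(x)=w_{\TT}(\TT^{\la}(x))$, and then peel one or two simple reflections off the right of $w_{\TT}$, choosing between $s_{u+1}s_u\TT^{\la}$ and $s_us_{u+1}\TT^{\la}$ according to which of $s_u,s_{u+1}$ is a right descent, so that the Garnir word cancels and the lengths add; your converse is the same descent computation run in reverse, and it does indeed yield $w_{\TT}(u)>w_{\TT}(v)$ in each of the three cases $w_{\mathsf{G}^{A'}}\in\{s_u,\ s_us_{u+1},\ s_{u+1}s_u\}$, using only that length additivity forces the intermediate one-step lengths to increase. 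The paper argues differently: it inspects which entries of $\TT$ occupy the Garnir belt $B^{A}$, and when they are not $u,\dots,v$ it uses the braid relations to exhibit a right-exposed subword of $w_{\TT}$ and factors $\TT$ through a Garnir tableau attached to a possibly \emph{different} node ($B=(\TT^{\la})^{-1}(a+2)$ or $C=(\TT^{\la})^{-1}(a-2)$), while its converse is the short contradiction that a standard $\TT$ would force $\mathsf{L}(w_{\TT})<\mathsf{L}(w)+\mathsf{L}(w_{\mathsf{G}^{A'}})$. What your approach buys is uniformity and economy: it needs only the standard equivalence between $w(i)>w(i+1)$ and $\mathsf{L}(ws_i)=\mathsf{L}(w)-1$, it always factors through a Garnir tableau at the given node $A$, and it replaces the appeal to non-standardness in the converse by an explicit verification. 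What the paper's belt analysis buys is a concrete identification of which subwords (such as $s_us_{u-1}s_u$ or $s_{a+2}s_{a+3}$) occur right-exposed in reduced expressions of non-standard tableaux, bookkeeping in the style of the exposed-transposition arguments reused later in Section~\ref{Section 3}; for the statement of the lemma itself that extra information is not needed.
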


\begin{proof}
Let $u := \TT^{\la}(r,1,m)$, $v := \TT^{\la}(r+1,1,m)$, $a := \TT(r,1,m)$ and $b := \TT(r+1,1,m)$. First we consider the case that $v = u + 1$. From our discussion in Remark \ref{Garrmk} we have that $\mathsf{G}^{A} = s_{u}\TT^{\la}$ and without loss of generality we may assume that $(r,1,m)$ is the unique node with $\TT(r,1,m) > \TT(r+1,1,m)$. If $\TT = \mathsf{G}^{A}$ we have nothing to prove, so let $\TT\neq \mathsf{G}^{A}$. If $a = v$ and $b = u$ the result is straightforward. Assume that $a\neq v$ and $b = a - 1$ and let $\SS := \TT_{a\leftrightarrow a-1}\in\mathsf{Std}(\la)$, hence $\TT = s_{a-1}\SS$. Then the word $s_{a-1}s_{a}s_{a-1}$ appears as subword of $w_{\TT}$ and by successively applying the braid Coxeter relations we end up with a subword of the form $s_{u}s_{u-1}s_{u}$ with $s_{u}$ being right exposed. Note that if $b\neq a-1$ then $\TT$ will be of the form 
\[
% [inline block 6: 2 envs, 3349 chars -> data_tex | \begin{tikzpicture}[scale = 0.7] %coordinates...]

\]
for some $2\le k\le a-1$ and we simply have the subword $s_{a-k}\cdots s_{a-2}$ on the left of $s_{u}s_{u-1}s_{u}$. %Similarly if $\TT$ is of the form

In any case we have that 
\begin{equation}\label{r1}
\TT = w^{\prime}s_{u}s_{u-1}s_{u}\TT^{\la} = w^{\prime}s_{u}s_{u-1}\mathsf{G}^{A}
\end{equation}
for some permutation $w^{\prime}\in\mathfrak{S}_{d}$ and we have factorised the non-standard tableau $\TT$ through the unique Garnir tableau associated to $A$.

Now we consider the case that $v = u + 2$ and recall that the Garnir tableau associated to $A$ are the tableaux $\mathsf{G}_{1}^{A} := s_{u}s_{u+1}\TT^{\la}$ and $\mathsf{G}_{2}^{A} := s_{u+1}s_{u}\TT^{\la}$. Same as in the case $v = u+1$, if $\TT =\mathsf{G}_{1}^{A}$ or $\TT =\mathsf{G}_{2}^{A}$ the result is straightforward. Hence we may assume that $\TT\neq\mathsf{G}_{1}^{A}, \mathsf{G}_{2}^{A}$. If the entries $u, u+1, u+2$ occupy the nodes in $B^{A}$ in $\TT$ then the result is straightforward, that is $\TT= s_{u+1}\mathsf{G}_{1}^{A} = s_{u}\mathsf{G}_{2}^{A}$. Now suppose that the numbers $u, u+1, u+2$ do not occupy the nodes of $B^{A}$, but those nodes contain consecutive numbers $a, a+1, a+2$. Then if $a < u$ we have that on of the subwords $s_{a+3}s_{a+2}$ or $s_{a+2}s_{a+3}$ will appear in $w_{\TT}$ and it will be right exposed, hence
\begin{equation}\label{r2}
\TT = w\mathsf{G}^{B}, \ \text{where} \ B := (\TT^{\la})^{-1}(a+2)
\end{equation}
for some $w\in\mathfrak{S}_d$ and some Garnir tableau $\mathsf{G}^{B}$ associated to $B$. If $a > u$ then either $s_{a-2}s_{a-1}$ ir $s_{a-1}s_{a-2}$ will appear as subword of $w_{\TT}$ and it will be right exposed, hence
\begin{equation}\label{r3}
\TT = w\mathsf{G}^{C}, \ \text{where} \ C := (\TT^{\la})^{-1}(a-2)
\end{equation} 
for some $w\in\mathfrak{S}_d$ and some Garnir tableau $\mathsf{G}^{C}$ associated to $C$. From (\ref{r1}), (\ref{r2}) and (\ref{r3}) we have the desired result.

Conversely, suppose that $\TT\in\Std(\la)$. Since $\mathsf{G}^{A^{\prime}}$ is non-standard, we should have $\mathsf{L}(w_{\TT}) < \mathsf{L}(w) + \mathsf{L}(w_{\mathsf{G}^{A^{\prime}}})$ which is a contradiction. 
\end{proof}

\begin{thm}[{Relations for cell modules}]\label{G-relns for the blob}
Let $\la = ((1^{\la_1}), (1^{\la_2}))\in\Bip(d)$. Then 
\begin{eqnarray}
\label{2.21}\mathsf{e}(\ui)\psi_{\TT^{\la}} & = & \delta_{\ui,\ui^{\la}}\psi_{\TT^{\la}}, \ \delta_{\ui,\ui^{\la}} \  \text{the Kronecker delta,}  \\
\label{2.23}y_s\psi_{\TT^{\la}} & = & 0 \\
\label{2.22}\psi_r\psi_{\TT^{\la}} & = & 
\begin{cases}
\psi_{\TT^{\la}_{r\leftrightarrow r+1}}  & \text{if} \  \text{$r, r+1$ are in different components} \\
0 & \text{otherwise}
\end{cases} \\
\label{2.24}\psi_{t+1}\psi_{t}\psi_{\TT^{\la}} & = & 0 \\
\label{2.25}\psi_{t}\psi_{t+1}\psi_{\TT^{\la}} & = & 0
\end{eqnarray}
for all $1\le r\le d-1$, $1\le s\le d$ and $1\le t \le 2\min\{\la_1, \la_2\} - 2$. We refer to the relations (\ref{2.22})-(\ref{2.25}) as \textit{Garnir relations}. 
\end{thm}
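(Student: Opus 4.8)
The plan is to push every relation into the cellular description of $\Delta(\la)$. Since $\TT^{\la}$ is the initial tableau, $w_{\TT^{\la}}=1$, so $\psi_{\TT^{\la}}$ is represented by the idempotent $\mathsf{e}(\ui^{\la})$ and, by Theorem~\ref{blob cellular}, acting on it by any $a\in\blob$ produces an element of $\mathrm{span}_{F}\{\psi_{\SS}\mid\SS\in\Std(\la)\}$; hence each relation is a matter of identifying coefficients. Relation \eqref{2.21} is then immediate from \eqref{2.3}. For the remaining four I would use one uniform device: $\psi_{w}\mathsf{e}(\ui^{\la})=\mathsf{e}(w\ui^{\la})\psi_{w}$ lies in the $w\ui^{\la}$-weight space of $\Delta(\la)$, and rewriting it in the standard basis via the KLR relations only introduces $\psi_{\SS}$ with $w_{\SS}\le w$ (the point at which Lemma~\ref{Bruhat on tab} enters); so the expansion is supported on the finitely many $\SS\in\Std(\la)$ with $w_{\SS}\le w$ and $\res(\SS)=w\ui^{\la}$, and a comparison of residue sequences and of degrees then does the rest.

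First I would do \eqref{2.23}. For $s=1$, the node $(\TT^{\la})^{-1}(1)$ is $(1,1,1)$ or $(1,1,2)$, so $i^{\la}_{1}$ equals $\kappa_{1}$ or $\kappa_{2}$ modulo $e$, and the blob relation \eqref{2.2} gives $y_{1}\psi_{\TT^{\la}}=0$. For $s>1$, $y_{s}$ commutes with $\mathsf{e}(\ui^{\la})$, so $y_{s}\psi_{\TT^{\la}}$ lies in the $\ui^{\la}$-weight space; reading off the path picture one checks that $\TT^{\la}$ is the unique standard $\la$-tableau of residue sequence $\ui^{\la}$ (the path $\tT^{\la}$ is as straight as possible and meets each wall at most once, so no composite of reflections carries it back to $\la$), whence $y_{s}\psi_{\TT^{\la}}=\alpha\,\psi_{\TT^{\la}}$; since $y_{s}$ shifts the grading by $2$ and $\psi_{\TT^{\la}}$ is homogeneous, $\alpha=0$.

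Next, \eqref{2.22}. From Definition~\ref{initial tableau} one gets the dichotomy that $s_{r}\TT^{\la}=\TT^{\la}_{r\leftrightarrow r+1}$ is standard precisely when $r$ and $r+1$ lie in different components of $\TT^{\la}$, and is non-standard precisely when they occupy vertically adjacent nodes of a single column, in which case $i^{\la}_{r}=i^{\la}_{r+1}+1$. In the standard case $w_{s_{r}\TT^{\la}}=s_{r}$, so $\psi_{s_{r}\TT^{\la}}=\psi_{r}\mathsf{e}(\ui^{\la})=\psi_{r}\psi_{\TT^{\la}}$ by the definition of $\psi_{\SS}$. In the non-standard case the only standard tableau with $w_{\SS}\le s_{r}$ is $\TT^{\la}$, whose residue sequence $\ui^{\la}$ differs from the weight $s_{r}\ui^{\la}$ of $\psi_{r}\psi_{\TT^{\la}}$ (as $i^{\la}_{r}\ne i^{\la}_{r+1}$), so $\psi_{r}\psi_{\TT^{\la}}=0$.

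Finally \eqref{2.24} and \eqref{2.25}. The hypothesis $1\le t\le 2\min\{\la_{1},\la_{2}\}-2$ places $t,t+1,t+2$ in the alternating part of $\TT^{\la}$, with $t,t+2$ vertically adjacent in one column and $t+1$ in the other, so the node of $t$ is a Garnir node $A$ with $v=u+2$; by \eqref{2.22}, $s_{t}\TT^{\la}$ and $s_{t+1}\TT^{\la}$ are standard, and consequently $\psi_{t+1}\psi_{t}\psi_{\TT^{\la}}$ and $\psi_{t}\psi_{t+1}\psi_{\TT^{\la}}$ are exactly the images of the two Garnir tableaux of $A$ described in Remark~\ref{Garrmk}. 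So the statement reduces to $\psi_{\mathsf{G}^{A}}=0$ in $\Delta(\la)$. By the triangularity above, $\psi_{\mathsf{G}^{A}}$ is supported on $\{\psi_{\TT^{\la}},\psi_{s_{t}\TT^{\la}},\psi_{s_{t+1}\TT^{\la}}\}$; comparing the weight $s_{t+1}s_{t}\ui^{\la}$ (respectively $s_{t}s_{t+1}\ui^{\la}$) against those three residue sequences eliminates all of them except in the single degenerate configuration $\kappa_{2}-\kappa_{1}=1$ with $t$ of one fixed parity, and there the surviving candidate sits two graded degrees away from $\psi_{\mathsf{G}^{A}}$, so a degree count finishes it. I expect the genuinely delicate point to be exactly this last reduction, i.e.\ the Garnir relations \eqref{2.24}--\eqref{2.25} themselves: the weight-space argument is transparent generically, but bookkeeping the residue sequences of $\TT^{\la}$ and its small perturbations and isolating the degenerate case takes care; the uniqueness assertion behind \eqref{2.23} is a secondary obstacle, resting on the path geometry rather than on an algebraic identity, while everything else is routine manipulation with the KLR and blob relations.
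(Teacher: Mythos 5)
Your proposal is correct, and its overall strategy (weight-space/residue comparison plus degree bookkeeping in the graded cellular basis) is of the same kind as the paper's, but the route through the delicate relations is genuinely different, and at one point more careful than the printed proof. The paper disposes of the same-component case of \eqref{2.22} and of \eqref{2.24}--\eqref{2.25} by asserting that no standard $\la$-tableau carries the permuted residue sequence; you instead bound the support of the expansion by the Bruhat order (your ``uniform device'', which is exactly \cite[Lemma 4.8, 4.9]{BrKlW}, i.e.\ the content of Proposition~\ref{techn prop} read through Lemma~\ref{Bruhat on tab}) and only then compare residues and degrees. This pays off precisely in the degenerate bicharge $\kappa_2-\kappa_1=1$, which you isolate: there the paper's non-existence claim is not literally true. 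For instance with $e=2$, $\kappa=(0,1)$, $\la=((1^2),(1^2))$ and $t=2$ one has $\ui^{\la}=(1,0,0,1)$, and the standard tableau $s_{3}\TT^{\la}$ has residue sequence $(1,0,1,0)$, which is exactly the weight of $\psi_{3}\psi_{2}\mathsf{e}(\ui^{\la})$; a similar phenomenon occurs for the same-component case of \eqref{2.22}, e.g.\ $\la=((1),(1^{3}))$, $r=3$. In these configurations it is your degree count that actually kills the coefficient (since $i_t^{\la}=i_{t+1}^{\la}$ the first crossing has degree $-2$, and the surviving candidate sits two degrees above the product), so your treatment of the Garnir relations is more complete, at the modest cost of invoking the Bruhat-triangularity of the straightening, which you assert rather than prove but which is available from the cited literature.

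One caveat, affecting your \eqref{2.23} in the same way it affects the paper's: the claim that $\TT^{\la}$ is the unique standard $\la$-tableau with residue sequence $\ui^{\la}$, and your geometric justification that the path of $\TT^{\la}$ meets each wall at most once, both fail exactly when $\kappa_2-\kappa_1=1$; in the example above $s_{2}\TT^{\la}$ also has residue sequence $\ui^{\la}$, because $i_2^{\la}=i_3^{\la}$ and the initial path runs along the wall $H_{-1/2}$. The conclusion $y_s\psi_{\TT^{\la}}=0$ is unharmed: either compare degrees against the whole $\ui^{\la}$-weight space (the extra tableaux, such as $s_2\TT^{\la}$, arise from crossings of equal residues and lie in degree $\deg(\TT^{\la})-2$ and below, while $y_s\psi_{\TT^{\la}}$ is homogeneous of degree $\deg(\TT^{\la})+2$), or simply quote Proposition~\ref{techn prop}(1), which rests on \cite{BrKlW} and not on the present theorem, and whose sum over tableaux strictly more dominant than the maximal tableau $\TT^{\la}$ is empty. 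Either repair keeps your argument intact, but the uniqueness statement itself should not be used as stated.
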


\begin{proof}
Let $\mathsf{e}(\ui)$, $\ui\in I^{d}$ be a KLR idempotent of $\blob$. By the orthogonality relation we have that 
\[
\mathsf{e}(\ui)\psi_{\TT^{\la}} = \mathsf{e}(\ui)\mathsf{e}(\ui^{\la}) = 
\begin{cases}
\psi_{\TT^{\la}} & \text{if} \ \ui = \ui^{\la} \\
0 & \text{otherwise}.  
\end{cases} 
\]
The element $\psi_{r}\mathsf{e}(\ui^{\la})$ corresponds to a tableau with residue sequence 
\[
(i_{1}^{\la},\cdots,i_{r+1}^{\la},i_{r}^{\la},\cdots,i_{d}^{\la}).
\]
We use the fact that for any standard tableau $\TT$ the element $\psi_{w_{\TT}}$ is unique up to KLR relation (\ref{2.9}). If the nodes of $\TT^{\la}$ occupied by the entries $r$, $r+1$ are in the same component, then any tableau with such residue sequence indexes elements in the ideal $\mathsf{B}^{\kappa}_{d,\triangleright \la}$, hence $\psi_{r}\mathsf{e}(\ui^{\la}) = 0$ modulo more dominant terms. If they are in different components then the only choice for a tableau with the above residue sequence and corresponding permutation consisting of the generator $\psi_{r}$ is the tableau $\TT^{\la}_{r\leftrightarrow r+1}$, hence $\psi_{r}\mathsf{e}(\ui^{\la}) = \psi_{\TT^{\la}_{r\leftrightarrow r+1}}$. The element $y_{s}\mathsf{e}(\ui^{\la})$ corresponds to a tableau with residue sequence $\ui^{\la}\in I^{d}$. The unique tableau with that residue sequence is $\TT^{\la}$. However 
\[
\deg(y_s\mathsf{e}(\ui^{\la})) = 2 \neq 0 = \deg(\mathsf{e}(\ui^{\la}))
\]
thus $y_{s}\mathsf{e}(\ui^{\la})$. Regarding relation (\ref{2.24}), if $t, t+1$ are in the same component then the result follows from (\ref{2.22}). If $t$, $t+1$ are in different components then the element $\psi_{t+1}\psi_{t}\mathsf{e}(\ui^{\la})$ corresponds to a tableau with residue sequence 
\[
(i_{1}^{\la},\cdots,i_{t+1}^{\la},i_{t+2}^{\la},i_{t}^{\la},\cdots,i_{d}^{\la}).
\] 
But such standard $\la$-tableau does not exist hence $\psi_{t+1}\psi_{t}\mathsf{e}(\ui^{\la}) = 0$ modulo terms in the ideal $\mathsf{B}^{\kappa}_{d,\triangleright\la}$ (we note that there are $\mu$-tableaux of this residue sequence, for $\mu\triangleright\la$). Similarly we prove relation (\ref{2.25}).
\end{proof}

Now we are using the theory we developed in this section in order to get a presentation for the cell modules of $\blob$.

\begin{prop}\label{presentation for cell modules}
Let $\la\in\Bip(d)$. The generator $\psi_{\TT^{\la}}$ and relations of Theorem \ref{G-relns for the blob} form a presentation for the cell module $\Delta(\la)$.
\end{prop}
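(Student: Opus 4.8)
The plan is to identify $\Delta(\la)$ with the module $N(\la)$ \emph{defined} by the generator and relations of Theorem~\ref{G-relns for the blob}, by a dimension comparison. Let $N(\la)$ be the left $\blob$-module generated by one element $x$ subject exactly to $\mathsf{e}(\ui)x=\delta_{\ui,\ui^\la}x$, $y_s x=0$ for $1\le s\le d$, $\psi_r x=0$ whenever the nodes of $\TT^\la$ holding $r,r+1$ lie in the same component, and $\psi_{t+1}\psi_t x=\psi_t\psi_{t+1}x=0$ for $1\le t\le 2\min\{\la_1,\la_2\}-2$. By Theorem~\ref{G-relns for the blob} the element $\psi_{\TT^\la}\in\Delta(\la)$ satisfies all of these, so there is a surjection $\theta\colon N(\la)\to\Delta(\la)$, $x\mapsto\psi_{\TT^\la}$; since $\dim_F\Delta(\la)=|\Std(\la)|$, it suffices to prove $\dim_F N(\la)\le|\Std(\la)|$. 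I would do this by showing $\{\psi_{w_\TT}x\mid\TT\in\Std(\la)\}$ spans $N(\la)$ over $F$ — note $\psi_{w_\TT}x$ is an unambiguous element of $N(\la)$ because, by Remark~\ref{remark for the induction}, any two reduced words for $w_\TT$ differ only by commuting moves, which act identically via~(\ref{2.9}).

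First I would clear dots and idempotents. By the usual PBW-type spanning set for KLR-quotient algebras, $\blob$ is $F$-spanned by $\{\psi_w\,y_1^{a_1}\cdots y_d^{a_d}\,\mathsf{e}(\ui)\}$ with $w\in\mathfrak{S}_d$ (one reduced word fixed for each $w$), $a_i\ge 0$, $\ui\in I^d$. Applying such an element to $x$ and using $\mathsf{e}(\ui)x=\delta_{\ui,\ui^\la}x$ kills all terms with $\ui\ne\ui^\la$; and if some $a_i>0$ then, taking the largest such index $j$, the later exponents vanish so $y_j^{a_j}$ is applied directly to $x$ and $y_j x=0$ annihilates the term. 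Hence $N(\la)$ is $F$-spanned by $\{\psi_w x\mid w\in\mathfrak{S}_d\}$.

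Next I would prove, by induction on $\mathsf{L}(w)$, that every $\psi_w x$ lies in $\mathrm{span}_F\{\psi_{w_\TT}x\mid\TT\in\Std(\la)\}$. The case $w=e$ is trivial. For $w\ne e$ put $\TT:=w\TT^\la$. If $\TT$ is standard then $w=w_\TT$ and, by Remark~\ref{remark for the induction}, $\psi_w x=\psi_{w_\TT}x$ exactly, one of our spanning elements. If $\TT$ is not standard, choose a node with $\TT(r,1,m)>\TT(r+1,1,m)$ and invoke Lemma~\ref{non-standard factors}: $\TT=w''\mathsf{G}^{A'}$ for some Garnir node $A'$ and Garnir tableau $\mathsf{G}^{A'}$, with $\mathsf{L}(w)=\mathsf{L}(w'')+\mathsf{L}(w_{\mathsf{G}^{A'}})$. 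Concatenating reduced words for $w''$ and $w_{\mathsf{G}^{A'}}$ gives a reduced word for $w$, and changing the reduced word for $w$ alters $\psi_w x$ only by $\blob$-images of strictly shorter $\psi$-monomials (via~(\ref{2.13})), which are in the span by induction; so it is enough to show $\psi_{w_{\mathsf{G}^{A'}}}x=0$. By Remark~\ref{Garrmk}, $w_{\mathsf{G}^{A'}}$ is $s_u$ when $v=u+1$ — and then the belt entries $u,u+1$ of $\mathsf{G}^{A'}$ lie in the same component of $\TT^\la$, so $\psi_u x=0$ by~(\ref{2.22}); when $v=u+2$ it is $s_us_{u+1}$ or $s_{u+1}s_u$, and provided $u\le 2\min\{\la_1,\la_2\}-2$ we get $\psi_{w_{\mathsf{G}^{A'}}}x=0$ directly from~(\ref{2.25}) or~(\ref{2.24}). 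Assembling this gives $\dim_F N(\la)\le|\Std(\la)|$, hence $\theta$ is an isomorphism and the presentation follows.

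The main obstacle is this last step: one must verify that \emph{every} Garnir tableau arising in Lemma~\ref{non-standard factors} is killed by the listed relations. A single exceptional Garnir node occurs, namely $(\min\{\la_1,\la_2\},1,2)$ when $\la_1<\la_2$, where $v=u+2$ with $u=2\min\{\la_1,\la_2\}-1$ lies just outside the range of~(\ref{2.24})–(\ref{2.25}); for it, $\psi_{w_{\mathsf{G}^{A'}}}x=0$ has to be deduced from~(\ref{2.21})–(\ref{2.25}) by a short separate computation (rewrite $\psi_{2\min\{\la_1,\la_2\}-1}x$ using~(\ref{2.22}) and then apply the KLR braid relation~(\ref{2.13})). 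The other delicate point is that $\psi_w$ genuinely depends on the chosen reduced word, so the length-additive factorisation of Lemma~\ref{non-standard factors} must be transported to the level of $\psi$'s without introducing the forbidden subword $s_rs_{r+1}s_r$ — exactly the control that Remark~\ref{remark for the induction} provides for standard tableaux and that must be tracked in the non-standard step.
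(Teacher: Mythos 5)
Your overall route is the same as the paper's: the paper's proof of Proposition \ref{presentation for cell modules} is exactly your argument in compressed form (the relations hold by Theorem \ref{G-relns for the blob}, and completeness is deduced from Lemma \ref{non-standard factors} together with the known basis of $\Delta(\la)$), and your write-up supplies the missing bookkeeping: the universal presented module, the PBW-type reduction to the elements $\psi_w x$, the induction on word length, and the control of reduced-word ambiguity via Remark \ref{remark for the induction} and the braid relation (\ref{2.13}). All of that is sound.

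However, the step you flag at the end is a genuine gap, not a footnote, and it is precisely the point the paper's one-sentence proof also glosses over. Write $m=\min\{\la_1,\la_2\}$ and suppose $\la_1<\la_2$. The Garnir node $(m,1,2)$ has $u=2m-1$, $v=2m+1$, so its two Garnir tableaux are $s_{2m-1}s_{2m}\TT^{\la}$ and $s_{2m}s_{2m-1}\TT^{\la}$; these words lie outside the range $t\le 2m-2$ of (\ref{2.24})--(\ref{2.25}), their first letters are not killed by the ``same component'' case of (\ref{2.22}) (the entries $2m-1,2m$ and $2m,2m+1$ sit in different components), and since these non-standard tableaux \emph{are} Garnir tableaux of length two, Lemma \ref{non-standard factors} gives no further factorisation: your induction genuinely requires $\psi_{2m}\psi_{2m-1}x$ and $\psi_{2m-1}\psi_{2m}x$ to be shown to lie in the span of the standard elements of the presented module. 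Your proposed fix --- ``rewrite $\psi_{2m-1}x$ using (\ref{2.22}) and then apply (\ref{2.13})'' --- does not do this as stated: the braid relation only relates length-three configurations, while here the words have length two, and applying extra $\psi$'s only produces identities among longer elements. In the smallest case $m=1$ the vanishing is automatic from the algebra relations (\ref{2.1}) and (\ref{2.14}), but for $m\ge 2$ the idempotent $\mathsf{e}(s_{2m}s_{2m-1}\ui^{\la})$ is generally nonzero, and a genuine KLR computation (with a case analysis on $\kappa_2-\kappa_1$ because the residues at positions $2m-1,2m,2m+1$ are $\kappa_2+1-m,\kappa_1+1-m,\kappa_2-m$) is needed to derive these two relations from the listed ones; alternatively one can verify, exactly as in the proof of Theorem \ref{G-relns for the blob} for $t\le 2m-2$, that no standard $\la$-tableau has the residue sequences $s_{2m}s_{2m-1}\cdot\ui^{\la}$ or $s_{2m-1}s_{2m}\cdot\ui^{\la}$ and then enlarge the presenting set to include $t=2m-1$ --- but that changes the statement unless the extra relations are shown to be redundant. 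Until one of these is carried out, neither your argument nor the paper's establishes that the listed relations are complete.
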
   

\begin{proof}
By Theorem \ref{G-relns for the blob} we have that the desired relations are satisfied. By Lemma \ref{non-standard factors} and the fact that we know a basis for the cell modules, we deduce that this list of relations is complete. Hence this is enough for proving that the relations, together with the generator $\psi_{\TT^{\la}}$, form a presentation for the cell module $\Delta(\la)$.
\end{proof}

The connection we described in Lemma \ref{Bruhat on tab} between the Bruhat order on words and the dominance order on tableaux will be useful throughout this paper. In particular it can be used for proving technical results such as the following proposition which deals with the action of the KLR generators on specific elements of the cell module.

\begin{prop}\label{techn prop}
Let $\la\in\Bip(d)$ be a bipartition and $\TT\in\Std(\la)$ be a standard $\la$-tableau.
\begin{enumerate}
\item For any generator $y_r$, $1\le r\le d$, we have that
\[
y_r\psi_{\TT} = \sum_{\substack{\SS\in\Std(\la) \\ \SS\triangleright \TT}}\alpha_{\SS}\psi_{\SS}
\]
with $\alpha_{\SS}\in F$.
\item If $s_r\TT\not\in\mathsf{Std}(\la)$, then 
\[
\psi_r\psi_{\TT} = \sum_{\substack{\SS\in\Std(\la) \\ \SS\triangleright \TT \\ \mathsf{res}(\SS) = \mathsf{res}(s_r\TT)}}\alpha_{\SS}\psi_{\SS}
\]
with $\alpha_{\SS}\in F$.
\end{enumerate}
\end{prop}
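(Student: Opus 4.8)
The plan is to prove both parts simultaneously by induction on $\ell:=\mathsf L(w_\TT)$, using Lemma~\ref{Bruhat on tab} to translate ``$\SS\triangleright\TT$'' into the statement ``$w_\SS<w_\TT$'' in Bruhat order, and using the Garnir presentation of $\Delta(\la)$ from Theorem~\ref{G-relns for the blob} / Proposition~\ref{presentation for cell modules} together with the factorisation of non-standard tableaux in Lemma~\ref{non-standard factors}. The base case is $\TT=\TT^\la$, i.e.\ $w_\TT=1$: statement~(1) is exactly relation~(\ref{2.23}), and in~(2) the hypothesis $s_r\TT^\la\notin\Std(\la)$ means precisely that $r$ and $r+1$ occupy the same component of $\TT^\la$, whence $\psi_r\psi_{\TT^\la}=0$ by~(\ref{2.22}); both right-hand sides are then the empty sum.

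For~(1) I would fix a reduced word $w_\TT=s_{i_1}\cdots s_{i_\ell}$, so $\psi_\TT=\psi_{i_1}\cdots\psi_{i_\ell}\psi_{\TT^\la}$, and push $y_r$ rightward past the $\psi$'s using the KLR relations~(\ref{2.8}), (\ref{2.10}) and~(\ref{2.11}): across each $\psi$ the $y$ either passes through (with a possible shift of subscript) or is annihilated, leaving a correction term in which that $\psi$ has been deleted. This produces
\[
y_r\psi_\TT \;=\; \psi_{i_1}\cdots\psi_{i_\ell}\,y_s\,\psi_{\TT^\la}\;+\;\sum_{j=1}^{\ell}c_j\,\psi_{i_1}\cdots\widehat{\psi_{i_j}}\cdots\psi_{i_\ell}\,\psi_{\TT^\la},\qquad c_j\in\{0,\pm1\},
\]
the first summand vanishing by~(\ref{2.23}). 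Each remaining summand is $\psi_{j_1}\cdots\psi_{j_{\ell-1}}\psi_{\TT^\la}$ for a subexpression $s_{j_1}\cdots s_{j_{\ell-1}}$ of $s_{i_1}\cdots s_{i_\ell}$, hence represents a permutation $v<w_\TT$ of length $\le\ell-1$; by the straightening claim below, such an element lies in $\mathrm{span}_F\{\psi_\SS : w_\SS\le v\}\subseteq\mathrm{span}_F\{\psi_\SS : w_\SS<w_\TT\}$. As $y_r$ commutes with $\mathsf e(\ui^\la)$, every $\psi_\SS$ occurring has $\res(\SS)=\res(\TT)$, so Lemma~\ref{Bruhat on tab} gives $\SS\triangleright\TT$.

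For~(2), note first that $\psi_r\psi_\TT=\psi_r\mathsf e(\res(\TT))\psi_\TT=\mathsf e(\res(s_r\TT))\psi_r\psi_\TT$, so every $\psi_\SS$ occurring satisfies $\res(\SS)=\res(s_r\TT)$, and it remains to see $w_\SS<w_\TT$. Since $\TT$ and $\TT^\la$ are standard while $s_r\TT\notin\Std(\la)$ forces $r,r+1$ into the same, hence into adjacent rows of the same, component of $\TT$, the transposition $s_r$ is a left ascent of $w_\TT$, so $\mathsf L(s_rw_\TT)=\ell+1$ and $\psi_r\psi_{i_1}\cdots\psi_{i_\ell}$ is the $\psi$-element of a reduced word for $w_{s_r\TT}$. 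By Lemma~\ref{non-standard factors} we may write $s_r\TT=w\,\mathsf G^{A'}$ with $\mathsf L(w_{s_r\TT})=\mathsf L(w)+\mathsf L(w_{\mathsf G^{A'}})$, so $w_{s_r\TT}$ has a reduced word of the form $\underline w\cdot\underline{w_{\mathsf G^{A'}}}$; bringing our reduced word into this form by Matsumoto moves (the commuting relation~(\ref{2.9}) and the braid relation~(\ref{2.13})) produces only correction terms of strictly smaller length, while the leading term becomes $\psi_w\,\psi_{w_{\mathsf G^{A'}}}\psi_{\TT^\la}$. By Remark~\ref{Garrmk}, $\mathsf G^{A'}$ is one of $s_u\TT^\la$, $s_us_{u+1}\TT^\la$, $s_{u+1}s_u\TT^\la$ for the Garnir node $A'$, so the Garnir relations~(\ref{2.22}), (\ref{2.24}) and~(\ref{2.25}) give $\psi_{w_{\mathsf G^{A'}}}\psi_{\TT^\la}=0$; thus $\psi_r\psi_\TT$ is again a sum of shorter-length terms, which the straightening claim expands over $\{\psi_\SS : w_\SS<w_\TT\}$.

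The straightening claim invoked above is: for any word $s_{j_1}\cdots s_{j_m}$ representing $v\in\mathfrak S_d$, one has $\psi_{j_1}\cdots\psi_{j_m}\psi_{\TT^\la}\in\mathrm{span}_F\{\psi_\SS : w_\SS\le v,\ \res(\SS)=v\cdot\ui^\la\}$; it is proved by a parallel induction on $m$ — rewrite the word to reduced form by~(\ref{2.9}), (\ref{2.12}) and~(\ref{2.13}) (each Bruhat-length non-increasing), push any resulting $y$'s to the right to be killed by~(\ref{2.23}) or, once they meet a genuine basis vector of smaller length, absorbed via part~(1), and use Lemma~\ref{non-standard factors} with the Garnir relations to dispose of contributions coming from reduced words whose permutation does not give a standard tableau. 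I expect the genuine obstacle of the whole argument to be exactly this bookkeeping: checking that no rewrite ever leaves $\mathrm{span}_F\{\psi_\SS : w_\SS\le v\}$ and that the correction terms really represent permutations $<w_\TT$ rather than merely ones of length $<\ell$. The crucial structural input here is the length-\emph{additivity} in Lemma~\ref{non-standard factors}, which guarantees that reaching a configuration where a Garnir relation applies never requires lengthening a word; granting this, both statements fall out of the computations above.
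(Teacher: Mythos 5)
The paper itself proves this proposition by a one--line citation to \cite[Lemmas 4.8, 4.9]{BrKlW}, so what you are attempting is a re-derivation of the cited lemmas; the overall shape of your argument (simultaneous induction, Garnir presentation, the left-ascent observation $\mathsf{L}(s_rw_\TT)=\mathsf{L}(w_\TT)+1$ when $s_r\TT\notin\Std(\la)$) is reasonable and those individual observations are correct. The genuine gap is the ``straightening claim'' together with the invariant you attach to it. Subwords of a \emph{reduced} word do represent Bruhat-smaller elements, so the first round of corrections in part (1) is under control; but as soon as relation (\ref{2.12}) converts a $\psi_r^2$ into dots and those dots are pushed along and delete further crossings, you are deleting letters from \emph{non-reduced} words, and a subword of a non-reduced word need not represent an element $\le v$ (delete one letter from $s_1s_1$, which represents the identity, and you are left with $s_1\not\le e$). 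Hence the invariant ``$w_\SS\le v$'' is not visibly preserved by your rewriting steps and the induction does not close as stated. This is exactly why \cite{BrKlW} (and Hu--Mathas) organise the induction around the dominance order on tableaux, with a separate lemma pinning down the unique leading term of $\psi_{\underline{w}}\psi_{\TT^{\la}}$ for reduced $\underline{w}$, rather than around Bruhat order and word length.

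Even granting the straightening claim, part (2) has a second gap at the final step. Your correction terms arise from subwords of a reduced expression of $s_rw_\TT$, so all you can conclude is $w_\SS\le s_rw_\TT$ and $\mathsf{L}(w_\SS)<\mathsf{L}(w_\TT)$; neither fact implies $w_\SS<w_\TT$, since elements of strictly smaller length may be Bruhat-incomparable with $w_\TT$, and you never rule this out. Moreover the translation of ``$w_\SS<w_\TT$'' into ``$\SS\triangleright\TT$'' via Lemma \ref{Bruhat on tab} is not available here in any case: every $\SS$ that occurs has $\res(\SS)=\res(s_r\TT)\neq\res(\TT)$, whereas the order $\trianglelefteq$ of Definition \ref{order on tableaux} (and hence Lemma \ref{Bruhat on tab}) presupposes equal residue sequences, so the comparison with $\TT$ must be made directly in the restriction-shape order $\succ$. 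To repair the argument you would either have to show that all corrections remain below $w_\TT$ itself (not just below $s_rw_\TT$), or switch the induction to the dominance order as in the proofs of \cite[Lemmas 4.8, 4.9]{BrKlW}; as written, the key step you yourself flag as ``bookkeeping'' is where the proof actually breaks.
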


\begin{proof} The result follows from \cite[Lemma 4.8, 4.9]{BrKlW}. \qedhere
\end{proof}

\section{Homomorphisms between cell modules}\label{Section 3}

\subsection{Construction of homomorphisms}
In this section we shall construct homomorphisms between certain cell modules of the blob algebra. By using the fact that $\blob$ is quasi-hereditary, we know that for a given bipartition $\nu\in\Bip(d)$ we have $\Hom_{\blob}(\Delta(\nu^{\prime}),\Delta(\nu))\neq 0$ only if $\nu^{\prime}\trianglelefteq\nu$. For the purposes of this paper we need to construct homomorphisms between cell modules indexed by linked bipartitions which also have lengths with absolute value differing by one.

Let $\mu\in\Bip(d)$ be a bipartition with $\ell(\mu) = m$ or $\ell(\mu) = m - 1/2$ for some integer $m\in\mathbb{Z}$. Equivalently $\mu$ lies in the alcove $\mathfrak{a}_m$ or in the hyperplane $H_{m-1/2}$.
\begin{enumerate}
\item Suppose that $\ell(\mu) = m$, $m\in\mathbb{Z}$. There exist at most two bipartitions $\la$, $\lapr$ with $\la, \lapr\sim\mu$ satisfying $|\ell(\la)| = |\ell(\lapr)| = |\ell(\mu)| + 1$. We wish to distinguish between these bipartitions (when they exist).
\begin{itemize}
\item If $m \le 0$ is a non-positive integer then we let $\ell(\la) < 0$ and $\ell(\lapr) > 0$;
\item if $m > 0$ is a positive integer then we let $\ell(\la) > 0$ and $\ell(\lapr) < 0$.
\end{itemize}
In this case we shall construct maps in the sets $\Hom_{\blob}(\Delta(\la^{\prime}),\Delta(\mu))$ and $\Hom_{\blob}(\Delta(\la),\Delta(\mu))$.
\item Suppose that $\ell(\mu) = m - 1/2$, $m\in\mathbb{Z}$. In this case we fix the unique bipartition $\lapr$ with $\lapr\sim\mu$ and $|\ell(\lapr)| = |\ell(\mu)| + 1$, such that
\begin{itemize}
\item if $m\le 0$ is a non-positive integer then $\ell(\lapr) > 0$;
\item if $m > 0$ is a positive integer then $\ell(\lapr) < 0$.
\end{itemize}
In this case we shall construct map in the set $\Hom_{\blob}(\Delta(\la^{\prime}),\Delta(\mu))$.
\end{enumerate}

\begin{notation}
From now on we will make the following abuse of notation. We shall not distinguish between the tableau $\TT$ and the attached path $\tT$ and both will be denoted by $\tT$. Moreover we shall denote by $\mathsf{t}_{m-1/2}^{i}$ the $i^{\mathrm{th}}$ intersection point of the path $\tT$ with the hyperplane $H_{m-1/2}$.  
\end{notation}

\renewcommand{\TT}{\mathsf{T}}
\renewcommand{\SS}{\mathsf{S}}
\renewcommand{\tT}{\mathsf{t}}

In what follows we shall restrict ourselves in the case that $m \le 0$ and we shall construct the maps we discussed above. Note that the results are not affected by whether $m\le 0$ or $m > 0$. This is just a convention in order to save space since everything is analogous for $m > 0$.   

\begin{defin}\label{homomorphisms}
Let $\mu \in\Bip(d)$ be a bipartition. 
\begin{enumerate}
\item If $\ell(\mu) = m$, $m\le 0$, we define the maps $\varphi_{\la}^{\mu}\colon \Delta(\la)\lxr\Delta(\mu)$ and $\varphi_{\lapr}^{\mu}\colon \Delta(\lapr)\lxr\Delta(\mu)$ as follows: 
\begin{equation}\label{hom1}
\varphi_{\la}^{\mu}(\psi_{\TT^{\la}}) := \psi_{s_{m-1/2}\cdot\TT^{\la}}
\end{equation} 
and
\begin{equation}\label{hom2}
\varphi_{\lapr}^{\mu}(\psi_{\TT^{\lapr}}) := \psi_{s_{1/2}\cdot\TT^{\lapr}}
\end{equation}
where the paths $s_{m-1/2}\cdot\TT^{\la}$ and $s_{1/2}\cdot\TT^{\lapr}$ are the reflections of the paths $\TT^{\la}$, $\TT^{\lapr}$ through the hyperplanes $H_{m-1/2}$, $H_{1/2}$ respectively. 
\item If $\ell(\mu) = m - 1/2$, $m\le 0$, then we define the map $\varphi_{\lapr}^{\mu}\colon\Delta(\lapr)\lxr\Delta(\mu)$ on the same way as in equation (\ref{hom2}).
\end{enumerate}
\end{defin}

\begin{rmk}\label{point of reflection of generator}
Note that each of the paths $\TT^{\la}$ and $\TT^{\lapr}$ intersects the hyperplanes $H_{m-1/2}$ and $H_{1/2}$ at precisely one point and we have dropped the superscripts.
\end{rmk}

\begin{rmk}\label{dual homs}
In the case that $m > 0$, we can define the maps $\varphi_{\la}^{\mu}$ and $\varphi_{\lapr}^{\mu}$ in an analogous way. In particular, if $\ell(\mu) = m > 0$, then  
\[
\varphi_{\la}^{\mu}(\psi_{\TT^{\la}}) := \psi_{s_{m+1/2}\cdot\TT^{\la}} \ \text{and} \ \varphi_{\lapr}^{\mu}(\psi_{\TT^{\lapr}}) := \psi_{s_{-1/2}\cdot\TT^{\lapr}}
\] 
where $s_{m+1/2}\cdot\TT^{\la}$ and $s_{-1/2}\cdot\TT^{\lapr}$ are the reflections of the paths $\TT^{\la}$, $\TT^{\lapr}$ through the hyperplanes $H_{m+1/2}$, $H_{-1/2}$ respectively. If $\ell(\mu) = m - 1/2$ then the desired map is $\varphi_{\la}^{\mu}(\psi_{\TT^{\lapr}}) = \psi_{s_{-1/2}\cdot\TT^{\lapr}}$. 
\end{rmk}

In the next proposition we shall prove that the maps of Definition \ref{homomorphisms} are indeed $\blob$-module homomorphisms. We cover the $m\le 0$ case, since the other case works analogously.

\begin{prop}\label{proof homs}
Let $\mu\in\Bip(d)$ with $\ell(\mu) = m$, $m\le 0$. The maps $\varphi_{\la}^{\mu}\colon \Delta(\la)\lxr\Delta(\mu)$ and $\varphi_{\lapr}^{\mu}\colon \Delta(\lapr)\lxr\Delta(\mu)$ of Definition \ref{homomorphisms} are homomorphisms of $\blob$-modules. 
\end{prop}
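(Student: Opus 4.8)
The plan is to verify that each prescribed map respects the defining relations of the cell module $\Delta(\la)$ (resp.\ $\Delta(\lapr)$) given in Proposition~\ref{presentation for cell modules}. Since $\Delta(\la)$ is presented by the single generator $\psi_{\TT^{\la}}$ subject to relations \eqref{2.21}--\eqref{2.25}, it suffices to check that the image $\psi_{s_{m-1/2}\cdot\TT^{\la}}\in\Delta(\mu)$ satisfies the corresponding relations with $\la$-data replaced appropriately; concretely, that $\mathsf{e}(\ui)\psi_{s_{m-1/2}\cdot\TT^{\la}}=\delta_{\ui,\ui^{\la}}\psi_{s_{m-1/2}\cdot\TT^{\la}}$, that $y_s$ annihilates it, and that the Garnir relations \eqref{2.22}--\eqref{2.25} for the initial tableau $\TT^{\la}$ are sent to valid identities in $\Delta(\mu)$. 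I would treat $\varphi_\la^\mu$ in detail and remark that $\varphi_{\lapr}^\mu$ is identical with $H_{m-1/2}$ replaced by $H_{1/2}$.

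The first and easiest point is the residue/idempotent relation: by \eqref{residues and reflections} (Plaza's \cite[Lemma~4.7]{Pla}), reflecting a path through a hyperplane preserves its residue sequence, so $\mathsf{res}(s_{m-1/2}\cdot\TT^{\la})=\mathsf{res}(\TT^{\la})=\ui^{\la}$; hence $\mathsf{e}(\ui)\psi_{s_{m-1/2}\cdot\TT^{\la}}=\delta_{\ui,\ui^{\la}}\psi_{s_{m-1/2}\cdot\TT^{\la}}$, which is exactly \eqref{2.21} transported to $\Delta(\mu)$. For the relation $y_s\psi_{\TT^{\la}}=0$, I would invoke Proposition~\ref{techn prop}(1): $y_s\psi_{s_{m-1/2}\cdot\TT^{\la}}$ is a sum of $\psi_\SS$ with $\SS\in\Std(\mu)$ and $\SS\triangleright s_{m-1/2}\cdot\TT^{\la}$, but since $s_{m-1/2}\cdot\TT^{\la}$ is the $\preceq$-maximal element of $\mathsf{Path}(\mu,\TT^{\la})$ lying in its residue class inside $\Std(\mu)$ — here one uses that $\TT^{\la}$ is maximal in $\Std(\la)$ together with the geometric description of dominance via the length function, as in the paragraph before Example~\ref{eg on paths and reflection}, and a degree count showing $\deg(s_{m-1/2}\cdot\TT^{\la})=\deg(\TT^{\mu})$ forces it to in fact equal $\TT^\mu$ up to the relevant comparison — no such $\SS$ exists, so $y_s\psi_{s_{m-1/2}\cdot\TT^{\la}}=0$. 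Similarly the Garnir relations \eqref{2.22}--\eqref{2.25} involve words $\psi_r$, $\psi_{t+1}\psi_t$, $\psi_t\psi_{t+1}$ for $t\le 2\min\{\la_1,\la_2\}-2$ acting on $\psi_{\TT^{\la}}$, i.e.\ subwords living entirely within the initial segment of the path before it reaches the wall $H_{m-1/2}$; since $\TT^{\la}$ and $s_{m-1/2}\cdot\TT^{\la}$ agree up to the reflection point $a$, applying such a $\psi$-word produces non-standard $\mu$-tableaux in precisely the same combinatorial situations, and by Proposition~\ref{techn prop}(2) the result is a sum of strictly more dominant $\psi_\SS$'s — which again must vanish by maximality. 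One must be slightly careful that the relevant generators $\psi_1,\dots,\psi_{2\min-2}$ act on the part of the path $s_{m-1/2}\cdot\TT^{\la}$ that is unchanged; this is where I would spell out that the reflection point lies beyond step $2\min\{\la_1,\la_2\}$, which follows because $m\le 0$ and $\TT^{\la}$ first weaves between the two columns and only afterwards travels out to cross $H_{m-1/2}$.

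Having checked all defining relations are preserved, the universal property of the presentation (Proposition~\ref{presentation for cell modules}) yields a well-defined $\blob$-module homomorphism $\varphi_\la^\mu\colon\Delta(\la)\to\Delta(\mu)$ sending $\psi_{\TT^{\la}}\mapsto\psi_{s_{m-1/2}\cdot\TT^{\la}}$, and likewise for $\varphi_{\lapr}^\mu$ using $H_{1/2}$; the $m>0$ variants of Remark~\ref{dual homs} follow by the same argument with the hyperplanes relabelled. I expect the main obstacle to be the verification that the image of the generator is annihilated by $y_s$ and by the Garnir words: this is not formal, since it rests on identifying $s_{m-1/2}\cdot\TT^{\la}$ as sufficiently dominant within $\Std(\mu)$ (ideally, maximal among paths obtained from $\TT^\la$ by reflections and landing at $\mu$), so that the ``error terms'' supplied by Proposition~\ref{techn prop} have nowhere to go. Making that dominance claim precise — via the length-function reinterpretation of the dominance order on tableaux and a careful look at where $\TT^{\la}$ crosses the walls — is the technical heart of the proof, and I would isolate it as a short lemma if it cannot be dispatched in a couple of lines.
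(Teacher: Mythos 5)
Your overall strategy --- check the defining relations of the presentation of $\Delta(\la)$ on the image of the generator and invoke Proposition~\ref{presentation for cell modules} --- is exactly the paper's, and your treatment of the idempotent relation and of the words in (\ref{2.24})--(\ref{2.25}) (which really do act on the initial, unchanged segment of the path) is sound. But there is a genuine gap at the hardest point. Relation (\ref{2.22}) is imposed for \emph{all} $1\le r\le d-1$, not only for $r\le 2\min\{\la_1,\la_2\}-2$ (that bound applies only to (\ref{2.24})--(\ref{2.25})); in particular it includes $r=n$, the step at which $\SS:=s_{m-1/2}\cdot\TT^{\la}$ meets the hyperplane $H_{m-1/2}$, and all $r$ in the tail beyond it. In $\TT^{\la}$ the entries $n,n+1$ lie in the same component, so you must show $\psi_n\psi_{\SS}=0$ in $\Delta(\mu)$; but $s_n\SS$ \emph{is} a standard $\mu$-tableau, because after the reflection the path changes direction and $n$, $n+1$ occupy different components of $\SS$. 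So your claim that applying the relevant $\psi$-words ``produces non-standard $\mu$-tableaux in precisely the same combinatorial situations'' fails exactly here, and Proposition~\ref{techn prop} gives you nothing. The paper isolates this case: $s_n$ is left exposed in $w_{\SS}$, so $\psi_n\psi_{\SS}=\psi_n^2\psi_{s_n\SS}=(y_{n+1}-y_n)\psi_{s_n\SS}$ by the quadratic KLR relation (\ref{2.12}) (the residues at a wall crossing differ by one), and both summands vanish because there is no standard $\mu$-tableau with residue sequence $\mathsf{res}(s_n\SS)$ and degree $\deg(s_n\SS)+2$. Without this step the well-definedness of $\varphi_{\la}^{\mu}$ is not established.

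A secondary problem is your justification of $y_s\psi_{\SS}=0$. The parenthetical claim that $\deg(s_{m-1/2}\cdot\TT^{\la})=\deg(\TT^{\mu})$, ``forcing'' $\SS$ to coincide with $\TT^{\mu}$, is false: the reflection through the unique crossing raises the degree by one, so $\deg(\SS)=\deg(\TT^{\la})+1\neq\deg(\TT^{\mu})$, and $\mathsf{res}(\SS)=\ui^{\la}\neq\ui^{\mu}$ in general. Moreover the maximality of $\SS$ within its residue class inside $\Std(\mu)$ is asserted rather than proved, and it is not needed: the paper argues directly that $y_s\psi_{\SS}$ expands over standard $\mu$-tableaux with residue sequence $\mathsf{res}(\SS)$ and degree $\deg(\SS)+2$, and no such tableau exists. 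Either reproduce that residue-and-degree count or give an actual proof of your dominance claim; as written, this step does not close.
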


\begin{proof}
We shall prove the result for the map $\varphi_{\la}^{\mu}$ and then similar arguments apply for the map $\varphi_{\lapr}^{\mu}$. By Proposition \ref{presentation for cell modules} we need to show that the relations (\ref{2.22})-(\ref{2.25}) of Theorem \ref{G-relns for the blob} are satisfied. Recall that $\varphi_{\la}^{\mu}(\psi_{\TT^{\la}}) = \psi_{s_{m-1/2}\cdot\TT^{\la}}$ and let us denote $\SS := s_{m-1/2}\cdot\TT^{\la}$. Also let $\SS(n) = \TT^{\la}(n)\in H_{m-1/2}$, for some $1\le n\le d-1$, be the unique reflection point of the path $\TT^{\la}$ through the hyperplane $H_{m-1/2}$. Then 
\begin{eqnarray*}
\mathsf{e}(\ui)\varphi_{\la}^{\mu}(\psi_{\TT^{\la}}) &  = & \mathsf{e}(\ui)\psi_{\SS} \\
& = &  \psi_{w_{\SS}}\mathsf{e}(w_{\SS}^{-1}\ui)\mathsf{e}(\ui^{\mu}) \\
& = & \delta_{\ui,\mathsf{res}(\SS)}\psi_{w_{\SS}}\mathsf{e}(\ui^{\mu}) \\ 
& = & \delta_{\ui,\mathsf{res}(\varphi_{\la}^{\mu}(\psi_{\TT^{\la}}))}\varphi_{\la}^{\mu}(\psi_{\TT^{\la}})
\end{eqnarray*}
for any idempotent $\mathsf{e}(\ui)$ and so relation $(\ref{2.21})$ holds. Now consider the generator $y_{s}$ for some $1\le s\le d$. Then 
\[
y_{s}\varphi_{\la}^{\mu}(\psi_{\TT^{\la}}) = y_{s}\psi_{\SS} = 0
\]
since there does not exist element in $\mathsf{Std}(\mu)$ with residue sequence $\mathsf{res}(\SS)$ and degree equal to $\deg(\SS) + 2$. Hence relation (\ref{2.23}) holds.

Consider the element $\psi_{r}\varphi_{\la}^{\mu}(\psi_{\TT^{\la}})$, for some $1\le r\neq n < d$. then 
\begin{eqnarray*}
\psi_{r}\varphi_{\la}^{\mu}(\psi_{\TT^{\la}}) & = & \psi_{r}\psi_{\SS} \\ 
& = & \begin{cases} \psi_{\SS_{r\leftrightarrow r+1}} \hspace{0.2cm} \text{if} \ r, r+1 \ \text{are in different components} \\
0 \hspace{1.3cm} \text{otherwise}  \end{cases}  \\
& = & \begin{cases} {\varphi_{\la}^{\mu}(\psi_{\TT^{\la}})}_{r\leftrightarrow r+1} \hspace{0.2cm} \text{if} \ r, r+1 \ \text{are in different components} \\
0 \hspace{2.35cm} \text{otherwise}  \end{cases}
.
\end{eqnarray*}
In order to prove that relation (\ref{2.22}) holds we need to consider the case $r = n$. By construction, the simple transposition $s_{n}$ exists in $w_{\SS}$ and it is left exposed. Hence $\psi_{\SS} = \psi_{n}\psi_{i_1}\cdots\psi_{i_l}\mathsf{e}(\ui^{\mu})$. Since $\mathsf{res}(\SS^{-1}(n)) = \mathsf{res}(\SS^{-1}(n+1)) + 1$ (since we reflected through a hyperplane at this point) we have that
\begin{eqnarray*}
\psi_{n}\psi_{\SS} & = & \psi_{n}^{2}\psi_{s_{n}\SS} \\ 
& = & (y_{n+1}-y_{n})\psi_{s_{n}\SS}.
\end{eqnarray*} 
But both summands are zero since there does not exist standard $\mu$-tableau with residue sequence $\mathsf{res}(s_{n}\SS)$ and degree $\deg(s_{n}\SS) + 2$. Thus $\psi_{n}\psi_{\SS} = 0$ and so relation (\ref{2.22}) holds.

Consider the product $\psi_{r+1}\psi_{r}\varphi_{\la}^{\mu}(\psi_{\TT^{\la}})$, for $r = \TT^{\la}(A)$ where $A$ is a Garnir node as in the statement of Theorem \ref{G-relns for the blob}.  Then
$$
\psi_{r+1}\psi_{r}\varphi_{\la}^{\mu}(\psi_{\TT^{\la}}) =  \psi_{r+1}\psi_{r}\psi_{\SS} = \psi_{r+1}\psi_{s_{r}(\SS)} 
$$
and we deduce that the product is zero (modulo terms in the ideal $\mathsf{B}^{\kappa}_{d,\triangleright\mu}$), since there does not exist standard $\mu$-tableau with residue sequence $\mathsf{res}(s_{r+1}s_{r}\SS)$. Hence relation (\ref{2.24}) is satisfied. Similarly we prove that relation (\ref{2.25}) is also satisfied. \qedhere
\end{proof}

In the following definition we define a type of paths which shall be useful in the next subsection when we will prove some of the main results of the paper.

\begin{defin}
Let $\TT\in\Std(d)$ be a standard tableau. The path $\TT$ is called \textsf{length increasing} if 
\[
|\ell(\Shape(\mathsf{T}{\downarrow_{\{1,\cdots,k\}}}))|\le |\ell(\Shape(\mathsf{T}{\downarrow_{\{1,\cdots,k+1\}}}))|
\]
for all $1\le k < d$.
\end{defin}

For a given bipartition $\mu\in\Bip(d)$, an example of a length increasing path is the following:
\[
% [inline block 7: 1 envs, 7015 chars -> data_tex | \begin{tikzpicture} \coordinate (X1) at (7.5,2);...]

\]

An alternative criterion to be a length increasing path of shape $\mu$ is that for a given bipartition $\mu\in\Bip(d)$ with $\ell(\mu) = m < 0$ (resp. $\ell(\mu) = m > 0$) every length increasing path in $\mathsf{Path}(\mu)$ intersects with the hyperplanes $H_{-1/2},\cdots, H_{m+1/2}$ (resp. $H_{1/2},\cdots, H_{m-1/2}$) at exactly one point and it does not intersect $H_{m+3/2},H_{m+5/2},\cdots$ (resp. $H_{m-3/2},H_{m-5/2},\cdots$).  In the exceptional case that $\ell(\mu) = 0$ we have that a length increasing path is a path that does not leave the fundamental alcove at any point.  

The following lemma will be useful in the sequel.

\begin{lem}\label{length incr is in simple}
Let $\nu\in\Bip(d)$ and $\TT\in\mathsf{Path}(\nu)$ be a length increasing path. Then the element $\psi_{\TT}$ belongs to the simple module $L(\nu)$.
\end{lem}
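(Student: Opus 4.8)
The plan is to show that $\psi_{\TT}$ lies in $\rad\,\Delta(\nu)$ is \emph{false} — wait, we want the opposite; rather, we want to show that $\psi_{\TT}$ maps to a nonzero element in $L(\nu) = \Delta(\nu)/\rad\,\Delta(\nu)$ and, more precisely, that $\psi_{\TT}$ is \emph{not} in the radical. Actually the cleanest reading of the statement ``$\psi_{\TT}$ belongs to the simple module $L(\nu)$'' is: the image of $\psi_{\TT}$ under the quotient map $\Delta(\nu)\twoheadrightarrow L(\nu)$ is nonzero, equivalently $\psi_{\TT}\notin\rad\,\Delta(\nu)$. So the goal is to exhibit some $\psi_{\SS}\in\Delta(\nu)$ with $\langle\psi_{\TT},\psi_{\SS}\rangle\neq 0$. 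The natural candidate is $\SS=\TT$ itself, so I would try to compute $\langle\psi_{\TT},\psi_{\TT}\rangle$ and show it is a monomial $t^{2\deg(\TT)}$ (up to sign/unit), hence nonzero in the field $F$ of characteristic zero.

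\textbf{Key steps.} First I would recall that $\langle\psi_{\TT},\psi_{\TT}\rangle$ is determined by $\psi_{\TT^{\nu}\TT}\,\psi_{\TT\TT^{\nu}} \equiv \langle\psi_{\TT},\psi_{\TT}\rangle\,\psi_{\TT^{\nu}\TT^{\nu}}\pmod{\mathsf{B}^{\kappa}_{d,\triangleright\nu}}$, using the cellular pairing from Theorem \ref{blob cellular}. Second, I would argue combinatorially about length increasing paths: the alternative characterization given just before the lemma says that a length increasing path of shape $\nu$, with $\ell(\nu)=m<0$ say, crosses each of $H_{-1/2},\dots,H_{m+1/2}$ exactly once and never crosses $H_{m+3/2},\dots$ (with the degenerate case $\ell(\nu)=0$ meaning the path stays in the fundamental alcove). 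Using the degree-of-a-pair formula \eqref{degree of a pair} and $\deg(\pi_{\TT})=\deg(\TT)$ from \cite[Corollary 4.6]{Pla}, every crossing contributes $+1$ and there are no $-1$ contributions, so $\deg(\TT)=|m|$ or, in general, $\deg(\TT)=|\ell(\nu)|\ge 0$ and moreover every initial segment $\TT\!\downarrow_{\{1,\dots,k\}}$ is again a length increasing path of its shape. Third, I would use Proposition \ref{techn prop}: applying a generator $\psi_r$ to $\psi_{\TT}$ produces either $\psi_{s_r\TT}$ (when $s_r\TT$ is standard) or a sum of strictly more dominant standard tableaux of the appropriate residue; and $y_r\psi_{\TT}$ is a sum of strictly more dominant terms. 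The point is that when we run the reduced word $w_{\TT}$ backwards to reduce $\psi_{\TT^{\nu}}\psi_{\TT}^{\ast}\psi_{\TT}$ down toward $\psi_{\TT^{\nu}\TT^{\nu}}$, at each step the ``diagonal'' contribution is picked out by the $\psi_r^2$ relation \eqref{2.12}: since along a length increasing path every relevant step corresponds to a crossing where $i_r = i_{r+1}\pm 1$, the relation $\psi_r^2\mathsf{e}(\ui)=\pm(y_{r+1}-y_r)\mathsf{e}(\ui)$ applies, and combining this with Proposition \ref{techn prop}(1) (which says the $y$'s only push to more dominant tableaux, which then vanish modulo $\mathsf{B}^{\kappa}_{d,\triangleright\nu}$ when they leave $\Std(\nu)$, or contribute controlled terms) yields that the surviving scalar is a unit times a power of $t$. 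Iterating over the reduced word gives $\langle\psi_{\TT},\psi_{\TT}\rangle = \pm t^{2\deg(\TT)}\neq 0$ in $F$.

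\textbf{Alternative cleaner route.} Rather than a direct bilinear-form computation, I would actually prefer to induct on $d$. Write $\TT = \TT'\circ A$ where $\TT' = \TT\!\downarrow_{\{1,\dots,d-1\}}\in\mathsf{Path}(\nu')$ and $A$ is the last node; $\TT'$ is again length increasing, so by induction $\psi_{\TT'}\notin\rad\,\Delta(\nu')$. One then relates $L(\nu)$ to a composition factor sitting inside the restriction / a suitable induced module, using the graded decomposition numbers of Theorem \ref{gr dec numbers}: since $[\Delta(\mu):L(\nu)]_t = t^{|\ell(\nu)|-|\ell(\mu)|}$ and a length increasing path has $\deg(\TT)=|\ell(\nu)|-|\ell(\varnothing)|=|\ell(\nu)|$, the grading forces $\psi_{\TT}$, which has degree $\deg(\TT)=|\ell(\nu)|$, to land in the top graded layer of $\Delta(\nu)$ that survives to $L(\nu)$ — here one uses Proposition \ref{simples bar invariant} (bar-invariance of $\dim_t L(\nu)$) together with the fact that $\Delta(\nu)$ has simple head $L(\nu)$ in degree $0$ and $\rad\,\Delta(\nu)$ is concentrated in degrees with the ``wrong'' parity/range so that a degree-$|\ell(\nu)|$ element lying in the image of the homomorphisms $\varphi^{\mu}_{\cdot}$ of Definition \ref{homomorphisms} cannot be radical unless it is zero, and $\psi_{\TT}\neq 0$ since it is a basis element.

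\textbf{Main obstacle.} The hard part will be the bookkeeping in the bilinear-form computation: controlling the ``more dominant'' error terms produced by Proposition \ref{techn prop} and by the $y$-terms in relation \eqref{2.12}, and checking that they all vanish modulo $\mathsf{B}^{\kappa}_{d,\triangleright\nu}$ (equivalently, that they correspond to non-standard $\nu$-tableaux or to genuinely more dominant $\mu$-tableaux with $\mu\triangleright\nu$). Making the induction in the ``cleaner route'' precise — in particular pinning down exactly which graded piece of $\Delta(\nu)$ the element $\psi_{\TT}$ occupies and why bar-invariance of $\dim_t L(\nu)$ forbids it from being radical — is the delicate step, and I expect the published proof to either do the explicit form computation or to invoke a restriction functor and the degree grading; I would write it via the degree argument, citing Theorem \ref{gr dec numbers} and Proposition \ref{simples bar invariant}, as it avoids the messiest combinatorics.
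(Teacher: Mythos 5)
There is a genuine gap in both of your routes, and also a concrete computational error that both rely on. From display (\ref{degree of a pair}), a step \emph{off} a wall earns $+1$ only when it moves \emph{toward} the origin, and a step \emph{onto} a wall earns $-1$ only when it arrives from \emph{outside}; a length increasing path does neither, so every step of such a path contributes $0$ and $\deg(\TT)=0$, not $|\ell(\nu)|$. Moreover the cellular pairing is $F$-valued (the scalar $\langle\psi_{\SS},\psi_{\TT}\rangle$ in Theorem \ref{blob cellular} lives in $F$, and homogeneity forces it to vanish unless $\deg\SS+\deg\TT=0$), so the asserted identity $\langle\psi_{\TT},\psi_{\TT}\rangle=\pm t^{2\deg(\TT)}$ is not even well typed, and the actual content — that \emph{some} pairing $\langle\psi_{\TT},\psi_{\SS}\rangle$ is nonzero — is exactly what you would have to prove; the sketched reduction via Proposition \ref{techn prop} and relation (\ref{2.12}) never pins down a surviving unit, and a diagonal Gram entry can perfectly well vanish. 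Your ``cleaner route'' has the same problem in disguise: by Theorem \ref{gr dec numbers} the radical's composition factors are $L(\nu')\langle k\rangle$ with $k>0$, but since $\dim_t L(\nu')$ is bar-invariant (Proposition \ref{simples bar invariant}) these shifted simples still have nonzero components in degree $0$, so knowing that $\psi_{\TT}$ sits in degree $0$ (or any fixed degree) does not exclude it from $\rad\,\Delta(\nu)$; and appealing to the structure of the images of the maps $\varphi^{\mu}_{\cdot}$ risks circularity, since those results come after this lemma.

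The paper's proof is a short idempotent argument that avoids all of this: by (\ref{residues and reflections}), any standard tableau with residue sequence $\res(\TT)$ is obtained from $\TT$ by wall reflections, and because $\TT$ is length increasing every such reflected path terminates at a bipartition $\nu'\trianglerighteq\nu$. Hence $\mathsf{e}(\res(\TT))\Delta(\nu')=0$, and therefore $\mathsf{e}(\res(\TT))L(\nu')=0$, for every $\nu'\triangleleft\nu$; since $\rad\,\Delta(\nu)$ has only such $L(\nu')$ as composition factors, $\mathsf{e}(\res(\TT))$ annihilates it, while $\mathsf{e}(\res(\TT))\psi_{\TT}=\psi_{\TT}\neq 0$. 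So the image of $\psi_{\TT}$ in the head $L(\nu)$ is nonzero, which is the assertion. If you want to salvage your write-up, replace both routes by this weight-space (residue-sequence) argument.
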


\begin{proof}
Let $\mathsf{res}(\TT)\in I^d$ be the residue sequence of $\TT$. Since $\TT$ is length increasing, the set $\mathsf{Path}(\nu^{\prime},\TT^{\nu})$ is non-empty only if $\nu^{\prime}\trianglerighteq\nu$. Hence we have that $\mathsf{e}(\mathsf{res}(\TT))\Delta(\nu^{\prime}) = 0$, for any bipartition $\nu^{\prime}\triangleleft \nu$. Thus $\mathsf{e}(\mathsf{res}(\TT))L(\nu^{\prime}) = 0$, for any bipartition $\nu^{\prime}\triangleleft \nu$. This shows that the element $\psi_{\TT}$ belongs to a composition factor of $\Delta(\mu)$ not of the form $L(\nu^{\prime})$, $\nu^{\prime}\triangleleft\nu$, so it belongs to the simple head $L(\nu)$.  
\end{proof}

\subsection{Image of the homomorphisms} In this subsection we shall construct the image of the homomorphisms $\varphi_{\lapr}^{\mu}$ and $\varphi_{\la}^{\mu}$ of Definition \ref{homomorphisms}. Same as in last subsection we cover the case that $m\le 0$, since all the arguments work equally in the case $m > 0$ up to relabelling hyperplanes. In the alcove case we compute the image of both $\varphi_{\la}^{\mu}$, $\varphi_{\lapr}^{\mu}$ whereas in the hyperplane case it is only necessary to consider the homomorphism $\varphi_{\la}^{\mu}$.  

Suppose that $\TT_{1}\in\mathsf{Path}(\lapr)$ is a length increasing path. The image of the element $\psi_{\TT_{1}}$ under the homomorphism $\varphi_{\lapr}^{\mu}$ is
\[
\varphi_{\lapr}^{\mu}(\psi_{\TT_{1}}) = \psi_{s_{1/2}\cdot\TT_{1}}
\]
since the path $s_{1/2}\cdot\TT_{1}$ is the unique path with residue sequence equal to $\mathsf{res}(\TT)$ terminating at the bipartition $\mu$. For the same reason, if $\ell(\mu) = m$, $m\le 0$ and $\TT_{2}\in\mathsf{Path}(\la)$ is a length increasing path then the image of the element $\psi_{\TT_{2}}$ under the homomorphism $\varphi_{\la}^{\mu}$ is 
\[
\varphi_{\la}^{\mu}(\psi_{\TT_{2}}) = \psi_{s_{m-1/2}\cdot\TT_{2}}.
\]
 
The following proposition is one of the main results of the section and describes a spanning set for the image of the homomorphism $\varphi_{\lapr}^{\mu}$. Note that the result holds for both $\ell(\mu) = m$ and $\ell(\mu) = m - 1/2$, $m\le 0$. 

\begin{prop}\label{image of lapr}
The homomorphism $\varphi_{\lapr}^{\mu}\colon\Delta(\lapr)\lxr\Delta(\mu)$ of Definition \ref{homomorphisms} is an injective homomorphism. Moreover 
\begin{enumerate}
\item if $m\le 0$
\[
\Image\varphi_{\lapr}^{\mu} = \mathrm{span}_{F}\{\psi_{\mathsf{U}} \ | \ \mathsf{U}\in\mathsf{Path}(\mu), \ \mathsf{U} \ \text{intersects} \ H_{1/2}\},
\]
\item if $m > 0$
\[
\Image\varphi_{\lapr}^{\mu} = \mathrm{span}_{F}\{\psi_{\mathsf{U}} \ | \ \mathsf{U}\in\mathsf{Path}(\mu), \ \mathsf{U} \ \text{intersects} \ H_{-1/2}\}.
\]
\end{enumerate}
\end{prop}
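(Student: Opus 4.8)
The plan is to realise $\varphi_{\lapr}^{\mu}$ as a \emph{unitriangular} map with respect to the cellular basis of $\Delta(\mu)$ of Theorem~\ref{blob cellular}, the combinatorics being controlled by the classical reflection bijection for lattice paths. This yields injectivity and the description of the image at the same time. I treat the case $m\le 0$ (so the relevant wall is $H_{1/2}$); the case $m>0$ is identical after relabelling $H_{1/2}$ by $H_{-1/2}$, and the two sub-cases $\ell(\mu)=m$ and $\ell(\mu)=m-\tfrac12$ are handled uniformly since only the fact that $\TT^{\lapr}$ crosses $H_{1/2}$ exactly once (Remark~\ref{point of reflection of generator}) and that $\lapr$ lies strictly beyond $H_{1/2}$ is used.

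First I would set up the reflection bijection. Since $\ell(\lapr)>0$ and $|\ell(\lapr)|=|\ell(\mu)|+1\ge 1$, the bipartition $\lapr$ lies strictly on the far side of $H_{1/2}$, so every path in $\mathsf{Path}(\lapr)$ meets $H_{1/2}$. For $\mathsf{V}\in\mathsf{Path}(\lapr)$ let $\Phi(\mathsf{V})$ be the path obtained by reflecting the portion of $\mathsf{V}$ after its first intersection with $H_{1/2}$ through $H_{1/2}$, as in the reflected-path construction preceding Figure~\ref{first examples of paths}. Two routine checks: $\Phi(\mathsf{V})$ is again a genuine path (its steps stay in $V_{\mathbb{Z}_{\ge 0}}$, because writing $H_{1/2}=\{v_1-v_2=h\}$ the first coordinate of the first intersection point is already $\ge h$ and the first coordinate is non-decreasing along the tail, so its reflection keeps both coordinates $\ge 0$), and $\Phi(\mathsf{V})\in\mathsf{Path}(\mu)$ meets $H_{1/2}$. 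Reflecting the tail of an arbitrary $\mathsf{U}\in\mathsf{Path}(\mu)$ meeting $H_{1/2}$ after its first intersection with $H_{1/2}$ provides a two-sided inverse, so by the tableau/path dictionary of Definition~\ref{path of a tableau} we obtain a bijection $\Phi\colon\mathsf{Std}(\lapr)\lxr\{\mathsf{U}\in\mathsf{Std}(\mu)\ :\ \mathsf{U}\text{ meets }H_{1/2}\}$, with $\Phi(\TT^{\lapr})=s_{1/2}\cdot\TT^{\lapr}$.

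The heart of the proof is the triangularity statement: for every $\mathsf{V}\in\mathsf{Std}(\lapr)$,
\[
\varphi_{\lapr}^{\mu}(\psi_{\mathsf{V}})\ \in\ \psi_{\Phi(\mathsf{V})}\ +\ \mathrm{span}_{F}\{\psi_{\mathsf{U}}\ :\ \mathsf{U}\in\mathsf{Std}(\mu),\ \mathsf{U}\triangleright\Phi(\mathsf{V}),\ \mathsf{U}\text{ meets }H_{1/2}\}.
\]
I would prove this by induction on the dominance order on $\mathsf{Std}(\lapr)$; the base case $\mathsf{V}=\TT^{\lapr}$ is the definition of $\varphi_{\lapr}^{\mu}$ together with $\Phi(\TT^{\lapr})=s_{1/2}\cdot\TT^{\lapr}$. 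For the inductive step, write $\psi_{\mathsf{V}}=\psi_r\psi_{\mathsf{V}'}$ with $\mathsf{V}'\in\mathsf{Std}(\lapr)$ strictly more dominant, using the Bruhat/dominance dictionary of Lemma~\ref{Bruhat on tab}, Remark~\ref{remark for the induction} and Lemma~\ref{non-standard factors} to ensure the intermediate tableaux can be taken standard; apply the inductive hypothesis and then act by $\psi_r$ using Proposition~\ref{techn prop} and the Garnir relations of Theorem~\ref{G-relns for the blob}. Two points must be verified: that $\psi_r\psi_{\Phi(\mathsf{V}')}$ equals $\psi_{\Phi(\mathsf{V})}$ plus strictly more dominant terms, i.e. that $\Phi$ is compatible with the elementary transposition $s_r$ (equivalently $\Phi(s_r\mathsf{V}')=s_r\Phi(\mathsf{V}')$ in this situation), and that the error terms produced, being more dominant and of the same residue sequence, are still indexed by paths meeting $H_{1/2}$ — equivalently, a path linked to and more dominant than one meeting $H_{1/2}$, with the same endpoint $\mu$, again meets $H_{1/2}$. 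The compatibility of $\Phi$ with $s_r$ is subtle exactly at the step where the first intersection with $H_{1/2}$ is created (swapping entries there can change which step is the first crossing), so choosing the reduced word carefully and tracking that step is what I expect to be the main technical obstacle.

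Granting the triangularity statement, the rest is formal. Writing the images $\{\varphi_{\lapr}^{\mu}(\psi_{\mathsf{V}})\}_{\mathsf{V}\in\mathsf{Std}(\lapr)}$ in the cellular basis $\{\psi_{\mathsf{U}}\}_{\mathsf{U}\in\mathsf{Std}(\mu)}$ of $\Delta(\mu)$, the resulting matrix, restricted to the rows indexed by $\{\mathsf{U}\in\mathsf{Std}(\mu):\mathsf{U}\text{ meets }H_{1/2}\}$, is unitriangular for the dominance order with ``diagonal'' given by the bijection $\Phi$ (and has zero entries in all other rows). Hence these images are linearly independent — so $\varphi_{\lapr}^{\mu}$ is injective, since $\{\psi_{\mathsf{V}}\}$ is a basis of $\Delta(\lapr)$ — and they span exactly $\mathrm{span}_{F}\{\psi_{\mathsf{U}}\ :\ \mathsf{U}\in\mathsf{Path}(\mu),\ \mathsf{U}\text{ meets }H_{1/2}\}$, which is the asserted image. (Consistently with this, $\Phi$ is degree-preserving up to the shift equal to $\deg\varphi_{\lapr}^{\mu}=|\ell(\lapr)|-|\ell(\mu)|$, so the two sides have equal graded dimension, matching Theorem~\ref{gr dec numbers}.) Finally the case $m>0$, and the case $\ell(\mu)=m-\tfrac12$, follow by the same argument with $H_{1/2}$ replaced by $H_{-1/2}$ where appropriate.
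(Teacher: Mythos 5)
Your overall architecture (a reflection bijection plus a triangularity/dimension argument) is reasonable, and in fact the auxiliary claim you pass over quickly is true: for linked paths, whether step $k$ lands on a wall, and the parity of that wall, are determined by the residue sequence, so a path that is more dominant than one touching $H_{1/2}$ at step $k$ must itself touch $H_{1/2}$ there. The genuine problem is your central unitriangularity claim, and it is not merely left unproved — with your choice of diagonal $\Phi$ (reflection at the \emph{first} crossing) it is false. Take $e=4$, $\kappa=(0,2)$, $d=6$, $\mu=((1^3),(1^3))$ (so $\ell(\mu)=0$) and $\lapr=((1^5),(1))$. Let $\mathsf{V}\in\Std(\lapr)$ be the tableau whose second component contains only the entry $4$; its path meets $H_{1/2}$ at steps $2$ and $4$, and $w_{\mathsf{V}}=s_3s_2s_1$. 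Then $\varphi_{\lapr}^{\mu}(\psi_{\mathsf{V}})=\psi_3\psi_2\psi_1\,\psi_{s_{1/2}\cdot\TT^{\lapr}}$, where $s_{1/2}\cdot\TT^{\lapr}$ has first component $\{2,3,4\}$. Each successive transposition swaps entries lying in different components, keeps the tableau standard and adds to the length, so by Remark~\ref{remark for the induction} the product is computed exactly: $\varphi_{\lapr}^{\mu}(\psi_{\mathsf{V}})=\psi_{\mathsf{U}}$ with $\mathsf{U}$ the tableau with first component $\{1,2,3\}$, i.e.\ the reflection of $\mathsf{V}$ at its \emph{last} crossing. Your proposed leading term, the first-crossing reflection $\Phi(\mathsf{V})$ (first component $\{1,2,4\}$), does not occur at all, and $\mathsf{U}\triangleleft\Phi(\mathsf{V})$. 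So your matrix has a zero on the claimed diagonal and a nonzero entry strictly below it, and linear independence does not follow. The underlying error is the tacit assumption that acting by $\psi_r$ produces the expected term plus only \emph{more} dominant corrections: when $s_r$ of a standard tableau is again standard with lengths adding, $\psi_r$ gives exactly the strictly \emph{less} dominant basis element (this is what the final $\psi_3$ does above, carrying $\psi_{\Phi(\mathsf{V})}$ down to $\psi_{\mathsf{U}}$); Proposition~\ref{techn prop} only governs the other cases. So the difficulty you flagged about the first crossing moving under $s_r$ is not a bookkeeping issue to be fixed by choosing reduced words — the diagonal itself is wrong, and the correct one is the last-crossing reflection.

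For comparison, the paper avoids triangularity altogether. It uses the last-crossing reflection only for the count $\#\{\mathsf{U}\in\mathsf{Path}(\mu):\mathsf{U}\text{ meets }H_{1/2}\}=\dim\Delta(\lapr)$, and then shows each such $\psi_{\mathsf{U}}$ lies \emph{exactly} in the image: for each possible crossing point it takes the lexicographically maximal path $\TT$ through that point, notes that $s_{1/2}\cdot\TT$ is length increasing so that $\varphi_{\lapr}^{\mu}(\psi_{s_{1/2}\cdot\TT})=\psi_{\TT}$ on the nose, and then writes $\psi_{\mathsf{U}}=\psi_{i_1}\cdots\psi_{i_l}\psi_{\TT}$ because $w_{\TT}$ occurs as a right-exposed subword of $w_{\mathsf{U}}$; injectivity and the image description then follow at once from $\dim\Image\varphi_{\lapr}^{\mu}\le\dim\Delta(\lapr)$. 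If you want to salvage a triangularity proof you should take the last-crossing reflection as your bijection, but you would still need a leading-coefficient argument, and the paper's maximal-path/right-exposed device is exactly the cheap way to obtain it.
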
 
%\vspace{-1cm}
\begin{proof}
We cover the case $m\le 0$ as the other one works similarly. Take any path $\mathsf{U}\in\mathsf{Path}(\mu)$ and suppose that it intersects the hyperplane $H_{1/2}$ at $n$-many points and let $\mathsf{u}_{1/2}^{n}$ be the final one. Then we notice that the reflection $s_{1/2}^{n}\cdot\mathsf{U}$ through the final point that $\mathsf{U}$ intersects the hyperplane $H_{1/2}$ gives a path terminating at $\lapr$. This shows that there is a bijection between the paths in $\mathsf{Path}(\mu)$ intersecting $H_{1/2}$ and the paths in $\mathsf{Path}(\lapr)$. We will prove that any path intersecting the hyperplane $H_{1/2}$ belongs indeed to the image of $\varphi_{\lapr}^{\mu}$ and thus the result will follow from the fact that the dimension of $\Delta(\lapr)$ is equal to the dimension of the image of $\varphi_{\lapr}^{\mu}$. 

We consider the path $\TT^{\mu}$ and we fix integer $a\in\mathbb{Z}$, $1\le a\le d-1$. Let $\TT\in\mathsf{Path}(\mu)$ be the maximal path, under the lexicographic order, with the property $\TT(a)\in H_{1/2}$ (see Figures \ref{Pasc tr1}, \ref{Pasc tr2}). We proceed by considering each value, $a$, one at a time. Since $\TT$ must intersect $H_{1/2}$ at some point by assumption, this allows us to consider all such paths. %The reduced expression $w_{\TT}\in\mathfrak{S}_d$ of the path $\TT$ depends on the quantum characteristic $e$. In particular if $e$ is odd then 
%\[
%w_{\TT} = s_{a}s_{a-1}s_{a+1}s_{a-2}s_{a}s_{a+2}\cdots s_{a-(e-3)}s_{a-(e-5)}\cdots s_{a-2}s_{a}s_{a+2}\cdots s_{a+(e-5)}s_{a+(e-3)}
%\]
%whereas if $e$ is even we have that 
%\[
%w_{\TT} = s_{a}s_{a-1}s_{a+1}s_{a-2}s_{a}s_{a+2}\cdots s_{a-(e-3)}s_{a-(e-5)}\cdots s_{a-1}s_{a+1}\cdots s_{a+(e-5)}s_{a+(e-3)}.
%\]
 
Note that the reflection $s_{1/2}\cdot\TT$ of the path $\TT$ through the hyperplane $H_{1/2}$ is a length increasing path in $\mathsf{Path}(\lapr)$. Hence the element $\psi_{\TT}$ belongs to the image of the homomorphism $\varphi_{\lapr}^{\mu}$. Let $\mathsf{U}\in\mathsf{Path}(\mu)$ be any path which intersects the hyperplane $H_{1/2}$ at the point $\mathsf{U}(a) = \TT(a)$, with $w_{\mathsf{U}} = s_{i_1}\cdots s_{i_k}\in\mathfrak{S}_d$ its reduced expression. Since $\TT$ is the maximal path, under the lexicographic order, with $\TT(a)\in H_{1/2}$ we have that $w_{\TT}$ is a subword of $w_{\mathsf{U}}$.  

\begin{figure} [h]
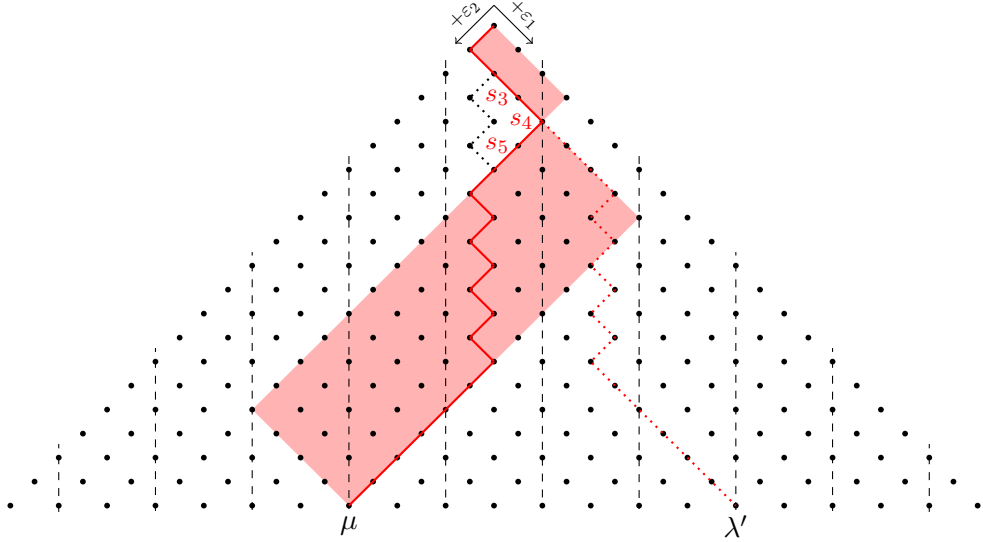

\[
% [inline block 8: 1 envs, 14208 chars -> data_tex | \begin{tikzpicture}[scale = 0.9] ...]

\]
\vspace{-0.8cm}
\caption{For $d = 20$, $\mu = ((1^{7}),(1^{13}))$, $\ell(\mu) = -3/2$, $e = 4$, $\kappa = (0,2)$ and $a = 4$ the red path is the path $\TT$. The shaded area corresponds to all the paths that can be obtained from $\TT$ and the elements corresponding to them belong to the image of $\varphi_{\la}^{\mu}$. Any path within the shaded region has $s_{4}s_{3}s_{5}$ as a subword.}
\label{Pasc tr1}
\end{figure}
\vspace{0.1cm}

Note that the subword $w_{\TT}$ will be right exposed, as otherwise the condition $\mathsf{U}(a)\in H_{1/2}$ would not hold. We can rewrite the reduced expression $w_{\mathsf{U}}$ as 
\[
w_{\mathsf{U}} = s_{i_1}\cdots s_{i_l}w_{\TT}, \ 1\le l\le k
\] 
hence $\mathsf{U} = s_{i_1}\cdots s_{i_l}\TT$, $1\le l\le k$.

\begin{figure}[h]
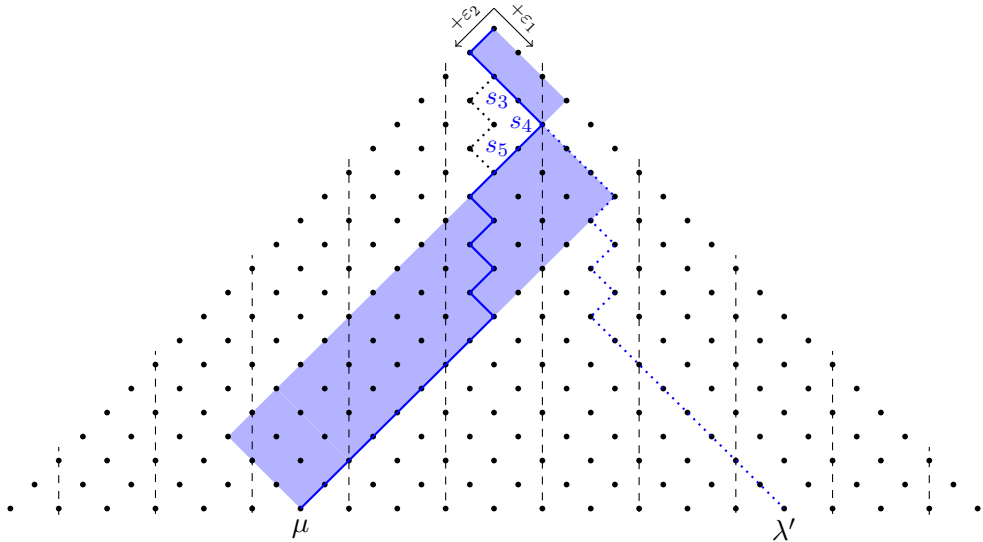

\[
% [inline block 9: 1 envs, 13975 chars -> data_tex | \begin{tikzpicture}[scale = 0.9] ...]

\]
\vspace{-0.8cm}
\caption{For $d = 20$, $\mu = ((1^{6}),(1^{14}))$, $\ell(\mu) = -2$, $e = 4$, $\kappa = (0,2)$ and $a = 4$ the blue path is the path $\TT$. The shaded area corresponds to all the paths that can be obtained from $\TT$ and the elements corresponding to them belong to the image of $\varphi_{\la}^{\mu}$. Any path within the shaded region has $s_{4}s_{3}s_{5}$ as a subword.}
\label{Pasc tr2}
\end{figure}

Then the basis element $\psi_{\mathsf{U}}$ corresponding to the path $\mathsf{U}$ can be written as
\[
\psi_{\mathsf{U}} = \psi_{i_1}\cdots \psi_{i_l}\psi_{\TT} 
\]
and it belongs to the image of $\varphi_{\la}^{\mu}$, since $\psi_{\TT}$ does. By repeating the same procedure for all admissible integers $a\in\mathbb{Z}$, $1\le a\le d$, we prove that all paths in $\mathsf{Path}(\mu)$ which intersect the hyperplane $H_{1/2}$ correspond to elements in the image of the homomorphism $\varphi_{\la}^{\mu}$. \qedhere
\end{proof}

\begin{eg}
Let $d = 20$, $e = 4$, $\kappa = (0,2)$ and $\mu = ((1^7),(1^{13}))$. If $\TT = s_4s_3s_5\TT^{\mu}$ then we observe that the path 
\[
\SS = s_{15}s_{14}s_{16}s_{18}s_{13}s_{15}s_{17}s_{3}s_{5}s_{12}s_{14}s_{16}s_{2}s_{4}s_{6}s_{11}s_{13}s_{15}s_{1}s_{3}s_{5}s_{7}s_{10}s_{12}s_{14}\TT^{\mu}
\]
can be written as 
\begin{eqnarray*}
\SS & = & s_{15}s_{14}s_{16}s_{18}s_{13}s_{15}s_{17}s_{3}s_{5}s_{12}s_{14}s_{16}s_{2}s_{6}s_{11}s_{13}s_{15}s_{1}s_{7}s_{10}s_{12}s_{14}(s_{4}s_{3}s_{5}\TT^{\mu}) \\
& = & s_{15}s_{14}s_{16}s_{18}s_{13}s_{15}s_{17}s_{3}s_{5}s_{12}s_{14}s_{16}s_{2}s_{6}s_{11}s_{13}s_{15}s_{1}s_{7}s_{10}s_{12}s_{14}\TT
\end{eqnarray*}
by using the Coxeter relations of the symmetric group (see Figure \ref{Pasc tr3}).

\begin{figure}[ht!]
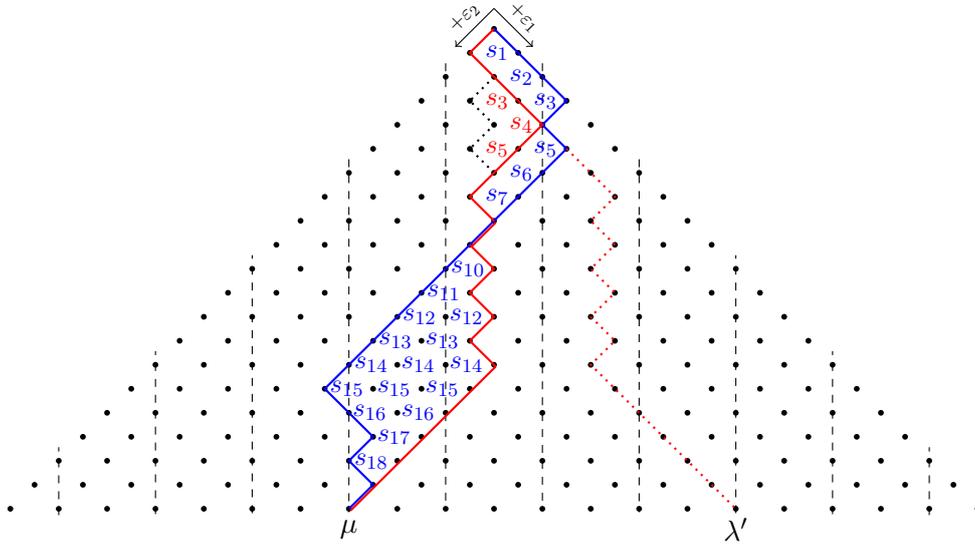

\[
% [inline block 10: 1 envs, 15705 chars -> data_tex | \begin{tikzpicture}[scale = 0.9] ...]

\]
\vspace{-0.8cm}
\caption{The red path is the path $\TT$ intersecting the hyperplane $H_{1/2}$ at the point $\TT(4)$. The blue path is the path $\SS$ which intersects $H_{1/2}$ at $\SS(4) = \TT(4)$ and it belongs to the image of the homomorphism $\varphi_{\lapr}^{\mu}$.}
\label{Pasc tr3}
\end{figure}

Hence the basis element $\psi_{\SS}$ can be written as 
\[
\psi_{\SS} = \psi_{15}\psi_{14}\psi_{16}\psi_{18}\psi_{13}\psi_{15}\psi_{17}\psi_{3}\psi_{5}\psi_{12}\psi_{14}\psi_{16}\psi_{2}\psi_{6}\psi_{11}\psi_{13}\psi_{15}\psi_{1}\psi_{7}\psi_{10}\psi_{12}\psi_{14}\psi_{\TT}\in\Delta(\mu)
\]
and since the element $\psi_{\TT}$ belongs to the image of $\varphi_{\lapr}^{\mu}$, we have that $\psi_{\SS}$ also belongs to the image of $\varphi_{\lapr}^{\mu}$. 
\end{eg}

Recall that the homomorphism $\varphi_{\la}^{\mu}\colon\Delta(\la)\lxr\Delta(\mu)$ only exists when $\ell(\mu) = m$, that is the bipartition $\mu$ lies in the alcove $\mathfrak{a}_m$. The construction of the spanning set for the image of the homomorphism $\varphi_{\la}^{\mu}$ is the next important result of our paper towards our aim to construct bases for the irreducible representations of $\blob$.  For completeness we give the spanning sets for both $m\le 0$ and $m > 0$.

\begin{prop}\label{image of la}
The homomorphism $\varphi_{\la}^{\mu}\colon\Delta(\la)\lxr\Delta(\mu)$ of Definition \ref{homomorphisms} is an injective homomorphism. Moreover
\begin{enumerate}
\item if $m\le 0$
\[
\Image\varphi_{\la}^{\mu} = \mathrm{span}_{F}\left\{\psi_{\mathsf{U}} \ |{\scalefont{1.3} \substack{ \ \mathsf{U}\in\mathsf{Path}(\mu), \ \mathsf{U} \ \text{last intersects} \ H_{m-1/2} \ \text{or} \ \\ \text{intersects} \ H_{1/2} \ \text{after intersecting} \ H_{-1/2}}}\right\},
\]
\item if $m > 0$
\[
\Image\varphi_{\la}^{\mu} = \mathrm{span}_{F}\left\{\psi_{\mathsf{U}} \ |{\scalefont{1.3} \substack{ \ \mathsf{U}\in\mathsf{Path}(\mu), \ \mathsf{U} \ \text{last intersects} \ H_{m+1/2} \ \text{or} \ \\ \text{intersects} \ H_{-1/2} \ \text{after intersecting} \ H_{+1/2}}}\right\}.
\]
\end{enumerate} 
\end{prop}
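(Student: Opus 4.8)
The plan is to follow the template of the proof of Proposition~\ref{image of lapr}, treating only the case $m\le 0$ (the case $m>0$ being identical after relabelling $H_{m-1/2}$ by $H_{m+1/2}$ and $H_{1/2}$ by $H_{-1/2}$). Write $S\subseteq\mathsf{Path}(\mu)$ for the set of paths indexing the claimed spanning set of $\mathrm{Image}\,\varphi_{\la}^{\mu}$. Since $\mathrm{Image}\,\varphi_{\la}^{\mu}$ is a quotient of $\Delta(\la)$ we have $\dim\mathrm{Image}\,\varphi_{\la}^{\mu}\le\dim\Delta(\la)=|\mathsf{Path}(\la)|$, while the elements $\{\psi_{\mathsf{U}}\mid\mathsf{U}\in S\}$ are linearly independent in $\Delta(\mu)$ because they are part of the cellular basis. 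Hence it suffices to prove (a) that $\psi_{\mathsf{U}}\in\mathrm{Image}\,\varphi_{\la}^{\mu}$ for every $\mathsf{U}\in S$, and (b) that $|S|=|\mathsf{Path}(\la)|$: these two facts force $\dim\mathrm{Image}\,\varphi_{\la}^{\mu}=\dim\Delta(\la)$, so $\varphi_{\la}^{\mu}$ is injective and its image is exactly $\mathrm{span}_{F}\{\psi_{\mathsf{U}}\mid\mathsf{U}\in S\}$.

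For step (a) I would use, just as in Proposition~\ref{image of lapr}, the observation that a length increasing path $\mathsf{V}\in\mathsf{Path}(\la)$ meets $H_{m-1/2}$ at a single point, so $s_{m-1/2}\cdot\mathsf{V}$ is the unique path with residue sequence $\mathsf{res}(\mathsf{V})$ ending at $\mu$ and therefore $\varphi_{\la}^{\mu}(\psi_{\mathsf{V}})=\psi_{s_{m-1/2}\cdot\mathsf{V}}$ on the nose. First I would run the argument for the ``last intersects $H_{m-1/2}$'' family: for each admissible step $a$ take the path $\mathsf{T}=\mathsf{T}_a\in\mathsf{Path}(\mu)$ which is maximal under the lexicographic order subject to $\mathsf{T}(a)\in H_{m-1/2}$ and $\mathsf{T}(k)\in\overline{\mathfrak{a}_m}$ for all $k\ge a$; one checks that $s_{m-1/2}\cdot\mathsf{T}$ crosses $H_{-1/2},H_{-3/2},\dots,H_{m-1/2}$ exactly once each and then remains in $\mathfrak{a}_{m-1}$, i.e.\ is length increasing of shape $\la$, whence $\psi_{\mathsf{T}}=\varphi_{\la}^{\mu}(\psi_{s_{m-1/2}\cdot\mathsf{T}})\in\mathrm{Image}\,\varphi_{\la}^{\mu}$. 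Then, exactly as in Proposition~\ref{image of lapr}, for a general path $\mathsf{U}$ last intersecting $H_{m-1/2}$ at step $a$ the word $w_{\mathsf{T}_a}$ occurs as a right exposed subword of $w_{\mathsf{U}}$, so $w_{\mathsf{U}}=s_{i_1}\cdots s_{i_l}w_{\mathsf{T}_a}$ and $\psi_{\mathsf{U}}=\psi_{i_1}\cdots\psi_{i_l}\psi_{\mathsf{T}_a}$ lies in the image because the image is a submodule. The ``intersects $H_{1/2}$ after $H_{-1/2}$'' family I would handle by a second application of the same right-exposed-subword device, reducing each such $\psi_{\mathsf{U}}$ to $\psi_{i_1}\cdots\psi_{i_l}\psi_{\mathsf{T}}$ for a suitable $\mathsf{T}$ of the first type; identifying, for each such $\mathsf{U}$, the maximal path of the first type that it dominates is where bookkeeping of reduced expressions in the style of Lemma~\ref{non-standard factors} enters.

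The hard part is the combinatorics of the set $S$ itself. Unlike in Proposition~\ref{image of lapr}, where the image was simply $\{\psi_{\mathsf{U}}\mid \mathsf{U}\text{ intersects }H_{1/2}\}$, here three walls $H_{m-1/2}$, $H_{-1/2}$, $H_{1/2}$ are involved, and the naive reflection map $\mathsf{Path}(\la)\to\mathsf{Path}(\mu)$ sending $\mathsf{V}$ to its reflection through the last intersection with $H_{m-1/2}$ is a bijection onto $\{\mathsf{U}\in\mathsf{Path}(\mu)\mid \mathsf{U}\text{ intersects }H_{m-1/2}\}$, which is \emph{not} the set $S$: a path that dips down to $H_{m-1/2}$, climbs back up as far as the fundamental alcove but no further, and then returns to $\mathfrak{a}_m$ lies in this set but not in $S$, whereas $S$ also contains paths that reach $H_{1/2}$ without ever touching $H_{m-1/2}$. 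Establishing (b) therefore amounts to a genuine lattice-path identity in the alcove geometry — concretely, to a bijection $\mathsf{Path}(\la)\xrightarrow{\sim}S$ obtained by correcting the above reflection — and the verification that the maximal paths produced in step (a) are exactly length increasing of shape $\la$ rests on the same case analysis (which also depends slightly on the position of the bicharge $\kappa$ relative to $e$, through whether the top of the fundamental alcove lies on or strictly inside $H_{1/2}$). Carrying out this case analysis carefully, and matching it to the two-part description of $S$, is where the real work lies.
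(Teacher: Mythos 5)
Your overall skeleton — show every $\psi_{\mathsf{U}}$ with $\mathsf{U}$ in the stated set lies in the image, then match cardinalities with $|\mathsf{Path}(\la)|$ to get both injectivity and the exact image — has the same shape as the paper's argument, and your treatment of the first family (``last intersects $H_{m-1/2}$'') via length increasing paths, maximal paths through a fixed wall-point, and right-exposed subwords is sound and is essentially the mechanism of Proposition \ref{image of lapr}. The genuine gap is your claim that the second family (``intersects $H_{1/2}$ after intersecting $H_{-1/2}$'') can be handled ``by a second application of the same right-exposed-subword device.'' A second-family path $\mathsf{U}\in\mathsf{Path}(\mu)$ need never come near $H_{m-1/2}$: to reach $\mu$ it only has to cross $H_{-1/2},\dots,H_{m+1/2}$, so one can have $|\ell(\Shape(\mathsf{U}{\downarrow_{\{1,\dots,k\}}}))|<|m-\tfrac12|$ for every $k$, whereas every first-family path $\mathsf{T}$ touches $H_{m-1/2}$ at some step $k_0$, where $|\ell(\Shape(\mathsf{T}{\downarrow_{\{1,\dots,k_0\}}}))|=|m-\tfrac12|$. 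A factorisation $\psi_{\mathsf{U}}=\psi_{i_1}\cdots\psi_{i_l}\psi_{\mathsf{T}}$ forces $w_{\mathsf{U}}=s_{i_1}\cdots s_{i_l}w_{\mathsf{T}}$ with additive lengths, hence $w_{\mathsf{T}}\le w_{\mathsf{U}}$ in the Bruhat order, which by Lemma \ref{Bruhat on tab} and the geometric reading of the dominance order means $\mathsf{U}$ is at least as far from the origin as $\mathsf{T}$ at \emph{every} step; this fails at $k_0$. So no first-family element can generate these basis vectors by pure $\psi$-multiplication, and step (a) of your plan collapses for exactly the family that distinguishes this proposition from Proposition \ref{image of lapr}.

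This is precisely where the paper's proof does its real work: the second-family elements are produced by acting with a single $\psi_b$ at a step where the reflected path would cross $H_{m-3/2}$, so the product is \emph{not} length-additive; the quadratic KLR relation gives $\psi_b\psi_{\hat{\mathsf{T}}'}=(y_{b+1}-y_b)\psi_{s_b\hat{\mathsf{T}}'}$, and a diagrammatic computation shows the $y_{b+1}$-term vanishes while the $y_b$-term equals $\pm\,\psi_{\mathsf{V}_1}$ with $\mathsf{V}_1=s_{m+1/2}s_{m+3/2}s_{1/2}s_{-1/2}\cdot(s_b\hat{\mathsf{T}}')$, a path of the second family; the nonzero scalar $\pm1$ is also what yields injectivity there. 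In your scheme injectivity is instead deduced from the count $|S|=|\mathsf{Path}(\la)|$, which is fine in principle, but it requires both halves of (a) and the corrected bijection $\mathsf{Path}(\la)\to S$ that you only flag as ``the real work'' (the paper's map $\Phi$, reflecting at $s_{m-1/2}^{\mathrm{last}}$ or at $s_{m-1/2}^{\mathrm{last}}s_{1/2}^{\mathrm{last}}s_{-1/2}^{\mathrm{last}}$, is exactly this correction). As written, then, the proposal does not go through: it is missing an argument — some analogue of the dot computation — that places the second family inside the image.
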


Before presenting the proof, we shall give an example which illustrates which paths we are referring to in the statement of Proposition \ref{image of la}.

\begin{eg}
Let $d = 20$, $e = 4$, $\kappa = (0,2)$ and consider the bipartition $\mu = ((1^{6}),(1^{14}))$ with $\ell(\mu) = m = -2$. Then $\la = ((1^4),(1^{16}))$ is the bipartition linked with $\mu$ with $\ell(\la) = -3$ (see Figure \ref{paths before the proof}). The hyperplanes that we shall be interested in are $H_{-1/2}$, $H_{1/2}$ which are the hyperplanes of the fundamental alcove and the hyperplane $H_{m-1/2} = H_{-5/2}$ which is the left hyperplane of the alcove $\mathfrak{a}_{-1}$.

\begin{figure}[ht!]
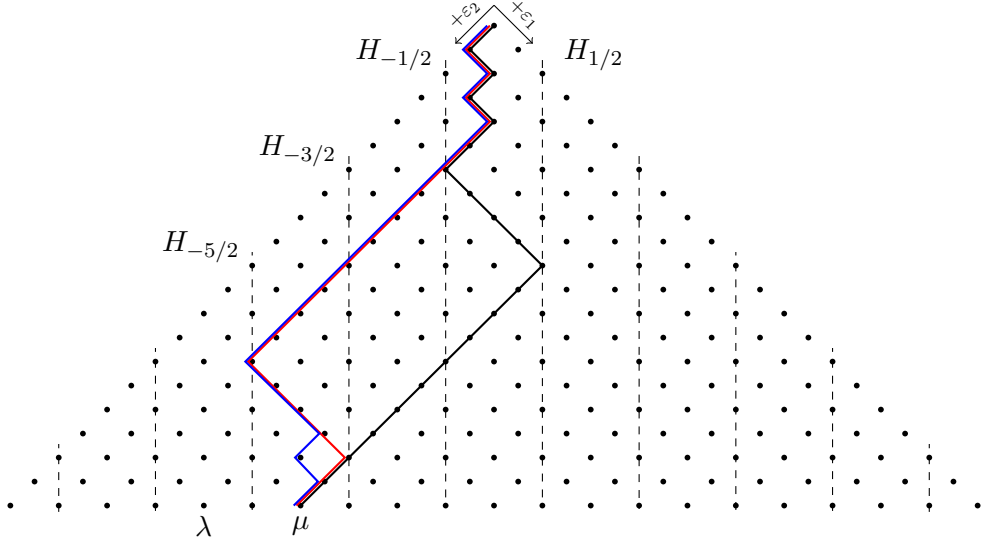

\[
% [inline block 11: 1 envs, 14544 chars -> data_tex | \begin{tikzpicture}[scale = 0.9] ...]

\]
\caption{The blue and the black path label elements which belong in the image of $\varphi_{\la}^{\mu}$ whereas the red path labels an element not in the image of $\varphi_{\la}^{\mu}$.}
\label{paths before the proof}
\end{figure}
The black path is a path which intersects the hyperplane $H_{1/2}$ after intersecting the hyperplane $H_{-1/2}$. The blue path is a path last intersecting the hyperplane $H_{-5/2}$. Both paths belong to the image of the homomorphism $\varphi_{\la}^{\mu}$. On the other hand the red path does intersect the hyperplane $H_{-5/2}$, but it last intersects $H_{-3/2}$ and it does not belong to the image of $\varphi_{\la}^{\mu}$.
\end{eg}

Let $\mu = ((1^{\mu_1}),(1^{\mu_2}))\in\Bip(d)$ with $\ell(\mu) = m < 0$, i.e. $\mu_1 < \mu_2$. We shall construct a path $\TT\in\Path(\mu)$ which intersects hyperplanes $H_{-1/2}, \cdots,H_{m+3/2},H_{m-1/2}$ at exactly one point, hyperplane $H_{m+1/2}$ at exactly two points and it does not intersect hyperplane $H_{1/2}$, as follows. Let $j\in\mathbb{Z}$, $1\le j\le d-1$ be such that $\TT(i) = \TT^{\mu}(i)$, for any $1\le i\le j+1$, and recall from relation (\ref{step of path}) that $\TT(j) = c_{j,1}(\TT)\epsilon_1 + c_{j,2}(\TT)\epsilon_2$. Also let $n > j$ such that $\TT(n)\in H_{-1/2}$ and $\TT(n) = c_{n,1}(\TT)\epsilon_1 + c_{n,2}(\TT)\epsilon_2$ with $c_{n,1}(\TT) = c_{j,1}(\TT)$ and $c_{n,2}(\TT) = c_{j,2}(\TT) + n - j$. We denote by $a\in\mathbb{Z}$ the integer with the property $\TT(a)\in H_{m-1/2}$ and $c_{a,1}(\TT) = c_{j,1}(\TT)$, $c_{a,2}(\TT) = c_{n,2}(\TT) + |\ell(\mu)|e$. Finally, let $\TT(b)\in H_{m+1/2}$ be the second intersection point of $\TT$ with $H_{m+1/2}$. Note that the integers $j, n, a, b$ determine the path $\TT$. The diagram corresponding to the basis element $\psi_{\TT}$ is presented in Figure \ref{General diagram}.

\begin{figure}[ht]
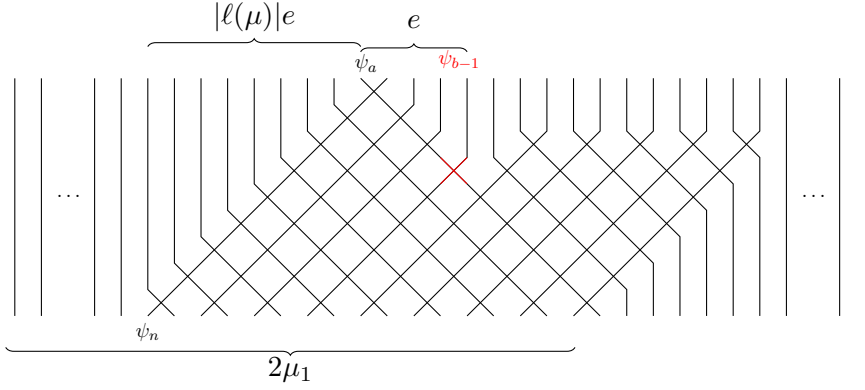

\[
% [inline block 12: 1 envs, 15767 chars -> data_tex | \begin{tikzpicture}[scale = 0.7] \coordinate (A1) at (0,0);...]

\]
\caption{The general form of the diagram corresponding to the element $\psi_{\TT}$. Here the crossing marked with red is the crossing $\psi_{b-1}$ and the colouring has nothing to do with the residues adjacent to it.}
\label{General diagram}
\end{figure}

\begin{notation}
Let $\nu\in\Bip(d)$ and $\TT\in\mathsf{Path}(\nu)$ be a path. We denote by $\mathsf{t}_{n-1/2}^{\mathrm{last}}$ the last intersection point of the path $\TT$ with the hyperplane $H_{n-1/2}$, for some $n\in\mathbb{Z}$. Also we denote by $s_{n-1/2}^{\mathrm{last}}$ the reflection through that point with respect to the hyperplane $H_{n-1/2}$. 
\end{notation}

\begin{proof}[Proof of Proposition \ref{image of la}]
Same as in the proof of Proposition \ref{image of lapr} we cover the case $m\le 0$. Let $a\in\mathbb{Z}$, $1\le a < d$ be a fixed integer such that if $\alpha = ((1^{\alpha_{1}}),(1^{\alpha_{2}}))$ is a bipartition of $a$, then $\alpha_1 - \alpha_2\in H_{m-1/2}$. Also let $\TT^{\alpha}\in\Std(\alpha)$ be the $\alpha$-tableau highest in the lexicographic order. Consider the skew bipartition $\la\backslash\alpha$ and let $\SS, \SS^{\prime}\in\mathsf{Path}(\alpha\rightarrow\la)$ be length increasing paths which are highest and lowest in the lexicographic order respectively (see Figure \ref{Figure 8}). Recall that the elements of the set $\mathsf{Path}(\alpha\rightarrow\la)$ are paths starting from the bipartition $\alpha\in H_{m-1/2}$ and endpoint the bipartition $\la$.  All the remaining length increasing $\la\backslash\alpha$-paths placed between $\SS$ and $\SS^{\prime}$ can be obtained by multiplying with a product of transpositions on the tableau $\SS$ and we focus on $\SS$, $\SS^{\prime}$ for the ease of notation. We define the standard $\la$-tableaux $\TT := \TT^{\alpha}\circ\SS$ and $\TT^{\prime} := \TT^{\alpha}\circ \SS^{\prime}$ and let $\hat{\TT} := s_{m-1/2}\cdot\TT$ and $\hat{\TT}^{\prime} := s_{m-1/2}\cdot\TT^{\prime}$ be the reflection of those paths through the unique point they intersect the hyperplane $H_{m-1/2}$. Note that since the paths $\TT$, $\TT^{\prime}$ are length increasing paths, the basis elements $\psi_{\hat{\TT}}$, $\psi_{\hat{\TT}^{\prime}}$ corresponding to the paths $\hat{\TT}$, $\hat{\TT}^{\prime}$ belong the image of the homomorphism $\varphi_{\la}^{\mu}$. 

We shall prove that if the generators $\psi_{r}$, $a < r < d$ act on $\mathsf{\psi_{\hat{\TT}}}$ then $\psi_{r}\psi_{\hat{\TT}}$ is a non-zero element and it corresponds to a path which either last intersects $H_{m-1/2}$ or intersects $H_{1/2}$ after intersecting $H_{-1/2}$. Since $\psi_{r}\psi_{\hat{\TT}}$ belongs to the image of $\varphi_{\la}^{\mu}$, the new element will also belong to the image of $\varphi_{\la}^{\mu}$. For any $a < r < d$ such that $s_{r}\TT$ does not intersect $H_{m-1/2}$, $H_{m+1/2}$, it is straightforward that $\psi_{r}\psi_{\hat{\TT}} = \psi_{s_{r}\hat{\TT}}$ because $s_{r}\TT$ is the unique tableau with the desired residue sequence. Let $b\in\mathbb{Z}$, $a<b<d$, such that $(s_b\TT)(b)\in H_{m+1/2}$. Since $s_{b}\hat{\TT}\preceq \hat{\TT}$ we also have that  
\[
\psi_{b}\psi_{\hat{\TT}} = \psi_{s_{b}\hat{\TT}}
\] 
and the element $\psi_{b}\psi_{\hat{\TT}}$ is a non zero element which belongs to the image of the homomorphism $\varphi_{\la}^{\mu}$. We also need to prove that $\psi_{r}\psi_{\hat{\TT}^{\prime}}$, $1<r<d$ is a non zero element which belongs to the radical. Consider the element $\psi_{\hat{\TT}^{\prime}}$ and let $b\in\mathbb{Z}$ be such that $(s_{b}\TT^{\prime})(b)\in H_{m-3/2}$. This is the only interesting case as for the rest cases the result is straightforward. The transposition $s_{b}$ will appear in the reduced expression of $\hat{\TT}^{\prime}$ and it will be left exposed. Hence
\[
\psi_{b}\psi_{\hat{\TT}^{\prime}} = \psi_{b}^{2}\psi_{i_1}\cdots\hat{\psi_{b}}\cdots\psi_{i_k}\mathsf{e}(\ui^{\mu})
\]
with $\psi_{s_{b}\hat{\TT}^{\prime}} = \psi_{i_1}\cdots\hat{\psi_{b}}\cdots\psi_{i_k}\mathsf{e}(\ui^{\mu})$, where by $\hat{\psi_{b}}$ we mean that the generator $\psi_{b}$ does not appear in the product. Since $\mathsf{res}(({s_{b}\hat{\TT}^{\prime}})^{-1}(b)) = \mathsf{res}(({s_{b}\hat{\TT}^{\prime}})^{-1}(b+1)) + 1$, by applying the KLR relation (\ref{2.12}) we have that
\[
\psi_{b}\psi_{\hat{\TT}^{\prime}} = (y_{b+1}-y_{b})\psi_{s_{b}\hat{\TT}^{\prime}}.
\] 

\underline{{\bf Step 1}:} We shall prove that $y_{b+1}\psi_{s_{b}\hat{\TT}^{\prime}} = 0$. Let $(s_{b}\hat{\TT}^{\prime})(n)\in H_{-1/2}$, for some $n\in\mathbb{Z}$, be the unique intersection point of the path $s_{b}\hat{\TT}^{\prime}$ with the hyperplane $H_{-1/2}$. 

\begin{figure}[ht!]
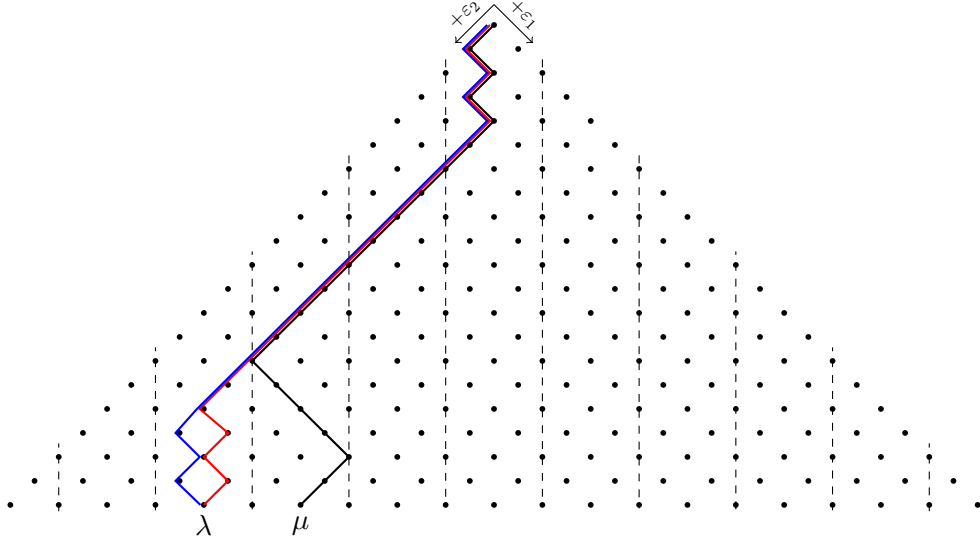

\[
% [inline block 13: 1 envs, 13550 chars -> data_tex | \begin{tikzpicture}[scale = 0.9] ...]

\]
\vspace{-1.1cm}
\caption{Let $d = 20$, $e = 4$ and $\kappa = (0,2)$. For $a = 14$ and $b = 18$ the paths $\TT$ (red), $\TT^{\prime}$ (blue) and $s_{18}\hat{\TT}^{\prime}$ (black) are depicted above. The paths $\SS$ and $\SS^{\prime}$ are the bits of the $\TT$ and $\TT$ starting from the hyperplane $H_{-5/2}$ all the way down to $\la$.  In this case $n = 6$.}
\label{Figure 8}
\end{figure}

In order to compute the product $y_{b+1}\psi_{s_b\hat{\TT}}$ it is easier to consider the diagrammatic presentation of our algebra. In particular the diagram of the element $s_{b}\psi_{\hat{\TT}^{\prime}}$ is of the form of Figure \ref{General diagram}.

%The leftmost crossing at the bottom corresponds to the generator $\psi_{n}$ and the top crossing corresponds to the generator $\psi_{a}$. The second highlighted crossing at the bottom of the diagram depends on the integer $b\in\mathbb{Z}$. If $(s_{b}\hat{\TT}^{\prime})(b)\in H_{m+1/2}$ is the first possible intersection point with the hyperplane $H_{m+1/2}$ then the crossing corresponds to $\psi_{n+e}$, if it is the second possible intersection point then the crossing corresponds to $\psi_{n+e+2}$ and so on and so forth. The rightmost marked crossing corresponds to the generator $\psi_{b-1}$.  

Note that the diagram consists of strands moving towards up to the right (UR-strands) and strands moving towards up to the left (UL-strands). If the $l^{\mathrm{th}}$ UR-strand (resp. UL-strand) carries the residue $i\in\mathbb{Z}/e\mathbb{Z}$ then the $(l+xe)^{\mathrm{th}}$, $x\in\mathbb{Z}_{>0}$, UR-strand (resp. UL-strand) also carries the residue $i\in\mathbb{Z}/e\mathbb{Z}$. We colour strands carrying the same residue with the same colour.  

We apply the generator $y_{b+1}$ on the element $\psi_{s_{b}\hat{\TT}^{\prime}}$ and we obtain the element corresponding to the following diagram.

\[
% [inline block 14: 6 envs, 92238 chars -> data_tex | \begin{tikzpicture}[scale = 0.5] \coordinate (A1) at (0,0);...]

\]
which is equal to zero because of the Garnir relations. Finally we get that $y_{b+1}\psi_{s_{b}\hat{\TT}^{\prime}} = 0$.   

\bigskip
\underline{{\bf Step 2}:}  Now we shall consider the product $y_{b}\psi_{s_{b}\hat{\TT}^{\prime}}$ and we shall distinguish between two cases according to the length of $\mu$. If $|\ell(\mu)| > 1$, then the unique element of $\mathsf{Path}(\mu,s_{b}\hat{\TT}^{\prime})$ with degree equal to $\mathrm{deg}(s_{b}\hat{\TT}^{\prime}) + 2$ is the path $\mathsf{V}_1 := s_{m+1/2}s_{m+3/2}s_{1/2}s_{-1/2}\cdot (s_{b}\hat{\TT}^{\prime})$, hence
\[
\psi_{b}\psi_{\hat{\TT}^{\prime}} = \alpha_{\mathsf{V}_1}\psi_{\mathsf{V}_1} 
\]   
with $\alpha_{\mathsf{V}_1}\in F$. If $|\ell(\mu)| = 1$, then the unique path in $\mathsf{Path}(\mu,s_{b}\hat{\TT}^{\prime})$ with the desired property is the path $\mathsf{V}_2 := s_{1/2}^{2}s_{1/2}^{1}\cdot(s_{b}\hat{\TT}^{\prime})$, hence 
\[
\psi_{b}\psi_{s_{b}\hat{\TT}^{\prime}} = \alpha_{\mathsf{V}_2}\psi_{\mathsf{V}_2} 
\] 
with $\alpha_{\mathsf{V}_2}\in F$. In order to prove that the homomorphism $\varphi_{\la}^{\mu}$ is injective it suffices to prove that the scalars $\alpha_{\mathsf{V}_1}$, $\alpha_{\mathsf{V}_2}\in F$ are non-zero. We prove it for the scalar $\alpha_{\mathsf{V}_1}\in F$ since the proof for $\alpha_{\mathsf{V}_2}\in F$ will be a subcase. The element $y_{b}\psi_{s_b\hat{\TT}^{\prime}}$ corresponds to the diagram 
\[
% [inline block 15: 8 envs, 122852 chars -> data_tex | \begin{tikzpicture}[scale = 0.4] ...]

\]

Hence we have proven that the scalar $\alpha_{\mathsf{V}_1}\in F$ is equal to $\pm 1$ and we shall not be interested in keeping track of its value. As a result the homomorphism $\varphi_{\la}^{\mu}$ is injective homomorphism. 
 
In any case the element $\psi_{b}\psi_{\hat{\TT}^{\prime}}$ corresponds to the path $\mathsf{V}_1 := s_{m+1/2}s_{m+3/2}s_{1/2}s_{-1/2}\cdot (s_{b}\hat{\TT}^{\prime})$ which intersects the hyperplane $H_{1/2}$ after intersecting the hyperplane $H_{-1/2}$. By repeating the same procedure for all admissible integers $a\in\mathbb{Z}$ we prove that the paths which either last intersect $H_{m-1/2}$ or intersect $H_{1/2}$ after intersecting $H_{-1/2}$, correspond to elements in the image of $\varphi_{\la}^{\mu}$.

\bigskip
{\bf \underline{A bijection:}} In order to complete the proof we need to prove that any element in the image of $\varphi_{\la}^{\mu}$ either last intersects $H_{m-1/2}$ or intersects $H_{1/2}$ after intersecting $H_{-1/2}$. For that purpose it suffices to show that the map 
\[
\Phi\colon\mathsf{Path}(\la)\lxr\mathsf{Path}(\mu)
\] 
defined by 
\[
\Phi(\mathsf{U}) := 
\begin{cases}
s_{m-1/2}^{\mathrm{last}}\cdot\mathsf{U}, \hspace{1.7cm} \text{if $\mathsf{U}$ last intersects $H_{m-1/2}$} \\ 
s_{m-1/2}^{\mathrm{last}}s_{1/2}^{\mathrm{last}}s_{-1/2}^{\mathrm{last}}\cdot\mathsf{U}, \ \text{otherwise}
\end{cases}
\]
is an injective map of degree one, with image containing the paths in $\mathsf{Path}(\mu)$ which either last intersect $H_{m-1/2}$ or intersect $H_{1/2}$ after intersecting $H_{-1/2}$. Let $\mathsf{U}\in\mathsf{Path}(\mu)$ be a path which last intersects the hyperplane $H_{m-1/2}$ at the point $\mathsf{u}_{m-1/2}^{\mathrm{last}}$. Then we have that
\[
\Phi(s_{m-1/2}^{\mathrm{last}}\cdot\mathsf{U}) = \mathsf{U}
\]
hence $\mathsf{U}$ belongs to the image of the map $\Phi$. Consider an arbitrary path $\mathsf{V}\in\mathsf{Path}(\mu)$ which intersects both hyperplanes $H_{-1/2}$ and $H_{1/2}$. Suppose that if $\mathsf{v}_{1/2}^{\mathrm{last}} = \mathsf{V}(n_2)$ is the last intersection point with the hyperplane $H_{1/2}$, then there exists an intersection point $\mathsf{v}_{-1/2}^{l} = \mathsf{V}(n_1)$ with $n_1 < n_2$ and assume that $n_1$ is the greatest integer with that property. Moreover let $\mathsf{v}_{m+1/2}^{\mathrm{last}} = \mathsf{V}(n_3)$ be the last intersection point of $\mathsf{V}$ with $H_{m+1/2}$. Then 
\[
\Phi(s_{-1/2}^{l}s_{1/2}^{\mathrm{last}}s_{m+1/2}^{\mathrm{last}}\cdot\mathsf{V}) = \mathsf{V}
\]    
hence $\mathsf{V}$ belongs to the image of $\Phi$. Since both those types of paths belong to the image of $\varphi_{\la}^{\mu}$ we have proven that any element in the image corresponds to a path of that form. The fact that $\Phi$ is of degree $1$ is straightforward by its construction. \qedhere
\end{proof}

\section{Bases of simple modules}

In this section we assume that $F$ is a field of characteristic $0$ and we shall construct the bases of simple modules for the algebra $\blob$. Recall from Section 3 that for a given bipartition $\mu\in\Bip(d)$ with $\ell(\mu) \le 0$ we fix two bipartitions $\la$, $\lapr$ and consider the homomorphisms $\varphi_{\la}^{\mu}$, $\varphi_{\lapr}^{\mu}$ of Definition \ref{homomorphisms}. Note that everything works on the same way if $\ell(\mu) > 0$, so we restrict ourselves to the previous case. Let us denote by $\Image \ \varphi_{\lapr}^{\mu}$ and $\Image \ \varphi_{\la}^{\mu}$  the images of the above homomorphisms, constructed in Propositions \ref{image of lapr} and \ref{image of la} respectively. We denote by $E(\mu)$ the quotient module
\[
E(\mu) := \Delta(\mu)/(\Image \ \varphi_{\lapr}^{\mu}+\Image \ \varphi_{\la}^{\mu})
\]
%\vspace{-2cm}
of $\Delta(\mu)$ modulo the sum of the images of the homomorphisms. From the results of the previous section we have that when $\mu$ belongs to an alcove, $E(\mu)$ is spanned by elements corresponding to paths which do not intersect the hyperplane $H_{1/2}$ and they do not last intersect the hyperplane $H_{m-1/2}$.  In the hyperplane case we have that the module $E(\mu)$ is spanned by elements $\psi_{\TT}$ where $\TT$ is a path which does not intersect the hyperplane $H_{1/2}$.

For any path $\TT\in\Path(\mu)$ with $\psi_{\TT}\in E(\mu)$ we shall construct a path $\bar{\TT}\in\Path(\mu)$ with $\psi_{\bar{\TT}}\in E(\mu)$ and $\deg(\bar{\TT}) = - \deg(\TT)$. We denote by $\mathsf{t}_{n-1/2}^{1}, \mathsf{t}_{n-1/2}^{2},\cdots$ the intersection points of $\TT$ with the hyperplane $H_{n-1/2}$ for some $n\le 0$. For the construction of $\bar{\TT}$ we focus our attention on the intersection points of $\TT$ with the hyperplanes. Let $\mathsf{T}^{i}_{n-1/2}$ be an intersection point of $\TT$ with the hyperplane $H_{n-1/2}$. If the next point that $\TT$ intersects any hyperplane is the point $\mathsf{t}_{n-1/2}^{i+1}$ then for all point between $\mathsf{T}^{i}_{n-1/2}$ and $\mathsf{T}^{i+1}_{n-1/2}$ (which belong to an alcove) we have that $\bar{\TT}(a) := (s_{n-1/2}^{i}\cdot\TT)(a)$. We need to consider the case that the next intersection point is $\mathsf{t}_{n-3/2}^{j}$, for some $j$, or $\mathsf{t}_{n-1/2}^{i}$ is the last intersection point of $\TT$ with any hyperplane. In these cases, for the points between $\mathsf{t}_{n-1/2}^{i}$ and $\mathsf{t}_{n-3/2}^{j}$ or the points from $\mathsf{t}_{n-1/2}^{i}$ until the end of the path respectively, we have that $\bar{\TT}(a) := \TT(a)$.

Note that the above construction does not depend on whether the bipartition $\mu$ lies in an alcove or on a hyperplane.

\begin{eg}
Suppose that $d = 24$, $e = 4$ and $\kappa = (0,2)$. We consider the bipartition $\mu = ((1^8),(1^{16}))\in\Bip(24)$ and let $\TT\in\mathsf{Path}(\mu)$ be the black path in Figure \ref{bar inv picture}  which corresponds to the basis element $\psi_{\TT}\in E(((1^8),(1^{16})))$. The path $\TT$ has degree $\deg(\TT) = -2$.

\begin{figure}
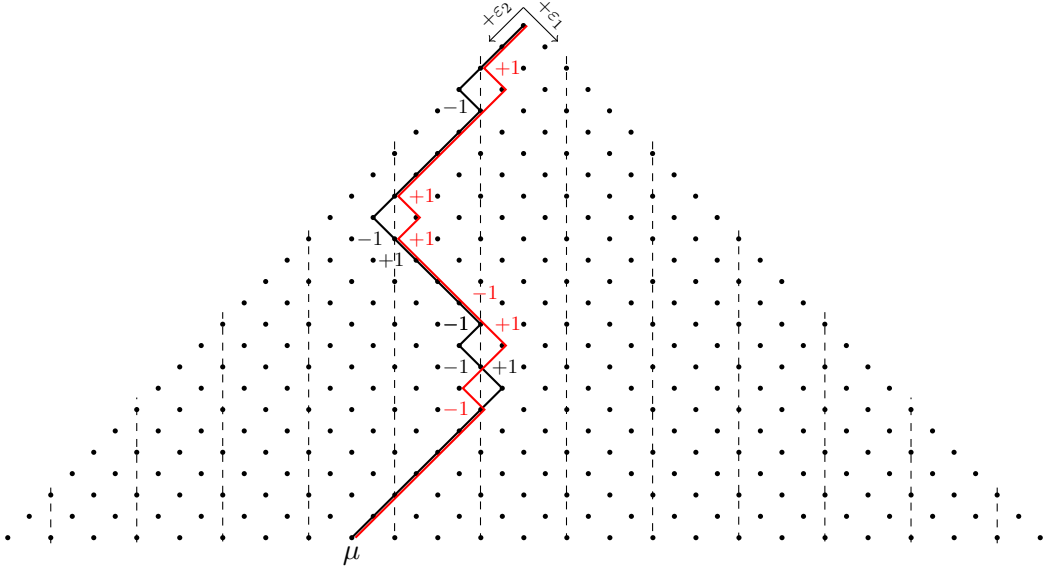

\[
% [inline block 16: 1 envs, 18839 chars -> data_tex | \begin{tikzpicture}[scale = 0.8][ht!] ...]

\]
\caption{The black path is the path $\TT$ and the red path is the path $\bar{\TT}$. The numbers in black and red are the integers contributing to the degree of $\TT$ and $\bar{\TT}$ respectively.}
\label{bar inv picture}
\end{figure}

The path $\bar{\TT}\in\mathsf{Path}(\mu)$ obtained by the procedure we described before, is the red path in Figure \ref{bar inv picture}. One can readily check that $\deg(\bar{\TT}) = 2 = - \deg(\TT)$.

\end{eg}

\begin{rmk}
Suppose that $\mu\in\Bip(d)$ with $\ell(\mu) = m > 0$. Then the elements spanning the module $E(\mu)$ are of the form $\psi_{\TT}$ where $\TT$ is a path which does not intersect $H_{-1/2}$ and does not last intersect the hyperplane $H_{m+1/2}$. 
\end{rmk}

Using the notions we defined above we can state and prove the following theorems. Those theorems are two of the main results of our paper and gives a precise description of the basis of an irreducible representation of the blob algebra over a field of characteristic 0, in the alcove and hyperplane cases.

\begin{thm}\label{bases of simple modules}
Let $\mu\in\Bip(d)$ with $\ell(\mu) = m$. The module $E(\mu)$ is equal to the simple head $L(\mu)$, hence
\begin{enumerate}
\item if $m\le 0$
\[
L(\mu) = \mathrm{span}_{F}\left\{\psi_{\TT} \ |{\scalefont{1.3} \substack{ \ \TT\in\mathsf{Path}(\mu), \ \TT \ \text{does not intersect} \ H_{1/2} \\ \text{and} \ \text{does not last intersect} \ H_{m-1/2}}}\right\},
\]
\item if $m > 0$
\[
L(\mu) = \mathrm{span}_{F}\left\{\psi_{\TT} \ |{\scalefont{1.3} \substack{ \ \TT\in\mathsf{Path}(\mu), \ \TT \ \text{does not intersect} \ H_{-1/2} \\ \text{and} \ \text{does not last intersect} \ H_{m+1/2}}}\right\}.
\]
\end{enumerate}
\end{thm}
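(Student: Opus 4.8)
The plan is to prove that the quotient $E(\mu)$ coincides with the simple head $L(\mu)$; granting this, the two displayed formulae for $L(\mu)$ are immediate from the descriptions of $\Image\varphi_{\lapr}^{\mu}$ and $\Image\varphi_{\la}^{\mu}$ in Propositions~\ref{image of lapr} and \ref{image of la}. As everywhere in the paper I would argue only for $m\le 0$, the case $m>0$ being identical after interchanging $H_{1/2}\leftrightarrow H_{-1/2}$ and $H_{m-1/2}\leftrightarrow H_{m+1/2}$.

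First I would produce a graded surjection $E(\mu)\twoheadrightarrow L(\mu)$. The bipartitions $\la,\lapr$ are linked to $\mu$ with $|\ell(\la)|=|\ell(\lapr)|=|\ell(\mu)|+1$, hence strictly less dominant than $\mu$; by the cellular structure (Theorem~\ref{blob cellular}) every composition factor of $\Delta(\la)$ or $\Delta(\lapr)$ is of the form $L(\nu)$ with $\nu$ less dominant than, or equal to, $\la$ respectively $\lapr$, so $L(\mu)$ is not among them. Since $\Image\varphi_{\la}^{\mu}$ and $\Image\varphi_{\lapr}^{\mu}$ are quotients of these cell modules, they lie in $\rad\Delta(\mu)$, so the projection $\Delta(\mu)\to L(\mu)$ factors through $E(\mu)$. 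As $E(\mu)$ is a non-zero quotient of $\Delta(\mu)$ it inherits the simple head $L(\mu)$, and the kernel $K$ of $E(\mu)\twoheadrightarrow L(\mu)$ equals $\rad E(\mu)$.

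Next I would show that $\dim_{t}E(\mu)$ is bar-invariant. By Propositions~\ref{image of lapr} and \ref{image of la}, $\Image\varphi_{\lapr}^{\mu}+\Image\varphi_{\la}^{\mu}$ is the span of the set of cellular basis vectors $\psi_{\mathsf{U}}$ of $\Delta(\mu)$ with $\mathsf{U}$ intersecting $H_{1/2}$ or last intersecting $H_{m-1/2}$; being a coordinate subspace, the quotient $E(\mu)$ has basis the images of the complementary cellular basis vectors, which are exactly the $\psi_{\TT}$ in the claimed spanning set. Thus $\dim_{t}E(\mu)=\sum_{\TT}t^{\deg(\TT)}$, the sum over $\TT\in\mathsf{Path}(\mu)$ that neither intersect $H_{1/2}$ nor last intersect $H_{m-1/2}$. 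The assignment $\TT\mapsto\bar{\TT}$ constructed before the theorem sends such a path to another of the same kind, is an involution, and reverses degree, i.e.\ $\deg(\bar{\TT})=-\deg(\TT)$; this last point I would check segment by segment via the local degree~(\ref{degree of a pair}), the reflected segments of $\bar{\TT}$ contributing the negatives of the corresponding contributions for $\TT$. Hence $\TT\mapsto\bar{\TT}$ is a degree-reversing involution of the basis of $E(\mu)$, so $\dim_{t}E(\mu)$ is bar-invariant; combined with the bar-invariance of $\dim_{t}L(\mu)$ (Proposition~\ref{simples bar invariant}), the kernel $K$ has bar-invariant graded dimension $\dim_{t}K=\dim_{t}E(\mu)-\dim_{t}L(\mu)$ with non-negative coefficients.

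It remains to prove $K=0$, equivalently $\rad\Delta(\mu)=\Image\varphi_{\la}^{\mu}+\Image\varphi_{\lapr}^{\mu}$, and this is the main obstacle. Since the inclusion $\subseteq$ is established, it suffices to equate graded dimensions, and I would do this by inclusion--exclusion. The maps $\varphi_{\la}^{\mu},\varphi_{\lapr}^{\mu}$ are injective (Propositions~\ref{image of lapr}, \ref{image of la}) and homogeneous of degree $|\ell(\la)|-|\ell(\mu)|=1$ (a short computation with~(\ref{degree of a pair}) at the reflection point, where the reflection creates one extra wall-bounce), so $\dim_{t}\Image\varphi_{\la}^{\mu}=t\,\dim_{t}\Delta(\la)$ and $\dim_{t}\Image\varphi_{\lapr}^{\mu}=t\,\dim_{t}\Delta(\lapr)$, which Theorem~\ref{gr dec numbers} expands in terms of the $\dim_{t}L(\nu)$. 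The intersection $\Image\varphi_{\la}^{\mu}\cap\Image\varphi_{\lapr}^{\mu}$ is again a coordinate subspace, spanned by the $\psi_{\mathsf{U}}$ meeting both conditions; the key combinatorial step is to identify it, by the same reflection bookkeeping as in the proofs of Propositions~\ref{image of lapr} and \ref{image of la}, with the image of the composite one-column homomorphism $\Delta(\nu_{0})\to\Delta(\la)\to\Delta(\mu)$, where $\nu_{0}$ is the linked bipartition with $|\ell(\nu_{0})|=|\ell(\mu)|+2$ (terms corresponding to any of $\la,\lapr,\nu_{0}$ not lying in $\mathsf{Bip}_{1}(d)$ being simply absent), giving $\dim_{t}\bigl(\Image\varphi_{\la}^{\mu}\cap\Image\varphi_{\lapr}^{\mu}\bigr)=t^{2}\dim_{t}\Delta(\nu_{0})$. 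Substituting these three expansions and comparing with $\dim_{t}\rad\Delta(\mu)=\dim_{t}\Delta(\mu)-\dim_{t}L(\mu)$, both computed by Theorem~\ref{gr dec numbers}, the two sides agree, forcing equality and $K=0$. An alternative, perhaps cleaner, finish is to note that the cellular form $\langle\,,\,\rangle$ on $\Delta(\mu)$ descends to $E(\mu)$ (as the images lie in $\rad\Delta(\mu)$) and to prove it non-degenerate there by checking that the Gram matrix $\bigl(\langle\psi_{\TT},\psi_{\bar{\SS}}\rangle\bigr)$ is unitriangular for the dominance order on paths; non-degeneracy of the induced form is precisely $K=0$. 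In either route the hypothesis $\operatorname{char}F=0$ enters through Plaza's decomposition numbers (Theorem~\ref{gr dec numbers}) and the non-degeneracy input.
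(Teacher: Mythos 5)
Your first two paragraphs are sound and essentially reproduce the paper's own set-up: the images of $\varphi_{\la}^{\mu}$ and $\varphi_{\lapr}^{\mu}$ lie in $\rad\Delta(\mu)$, so $E(\mu)$ surjects onto $L(\mu)$ with kernel $K$, and the degree-reversing map $\TT\mapsto\bar{\TT}$ shows that $\dim_{t}E(\mu)$ is bar-invariant. The gap is in your third paragraph, and it is twofold. First, you have overlooked that at this point you are already done, and this is precisely how the paper concludes: over a field of characteristic zero, Theorem~\ref{gr dec numbers} says that every composition factor of $\rad\Delta(\mu)$ (hence of its quotient $K$) is of the form $L(\nu)\langle k\rangle$ with $k=|\ell(\nu)|-|\ell(\mu)|>0$. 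Thus $\dim_{t}K$ is a non-negative sum of terms $t^{k}\dim_{t}L(\nu)$ with $k>0$ and each $\dim_{t}L(\nu)$ bar-invariant (Proposition~\ref{simples bar invariant}); writing $p(t)=\dim_{t}K$, bar-invariance gives $p'(1)=0$, whereas each summand contributes $k\cdot\dim L(\nu)>0$ to $p'(1)$. Hence $K=0$ and $E(\mu)=L(\mu)$, with no need to compute the intersection of the two images.

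Second, the substitute argument you propose is not merely unproven but false as stated. The identification $\Image\varphi_{\la}^{\mu}\cap\Image\varphi_{\lapr}^{\mu}=\Image\bigl(\Delta(\nu_{0})\to\Delta(\la)\to\Delta(\mu)\bigr)$, of graded dimension $t^{2}\dim_{t}\Delta(\nu_{0})$, cannot hold whenever the linkage class of $\mu$ contains \emph{two} bipartitions $\nu_{2},\nu_{2}'$ with $|\ell|=|\ell(\mu)|+2$, which is the generic situation (one orbit point in each of the two alcoves at that distance, as in Figure~\ref{intro eg}). Indeed, expanding by Theorem~\ref{gr dec numbers}, your inclusion--exclusion would give $\dim_{t}\bigl(\Image\varphi_{\la}^{\mu}+\Image\varphi_{\lapr}^{\mu}\bigr)=t\dim_{t}\Delta(\la)+t\dim_{t}\Delta(\lapr)-t^{2}\dim_{t}\Delta(\nu_{2})=\dim_{t}\rad\Delta(\mu)+t^{2}\dim_{t}L(\nu_{2}')$, which strictly exceeds $\dim_{t}\rad\Delta(\mu)$ and so contradicts the containment of both images in the radical that you established in your first paragraph. (By Proposition~\ref{comp of one column homs} \emph{both} composites, through $\Delta(\nu_{2})$ and through $\Delta(\nu_{2}')$, land in the intersection, and neither alone exhausts it; the correct graded dimension of the intersection is $\sum_{i\ge 2}t^{i}\bigl(\dim_{t}L(\nu_{i})+\dim_{t}L(\nu_{i}')\bigr)$.) The alternative finish via unitriangularity of the Gram matrix $\bigl(\langle\psi_{\TT},\psi_{\bar{\SS}}\rangle\bigr)$ is only asserted, not argued, and is not needed once the positive-shift observation above is in place.
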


\begin{proof}
By Proposition \ref{simples bar invariant} and Theorem \ref{gr dec numbers} (the latter of which holds because we are working over   a field of characteristic zero) we can characterise  $L(\mu)$ as  the largest vector-space
 quotient of $\Delta(\mu)$ which has a   bar-invariant character (i.e. fixed under interchanging $t$ and $t^{-1}$) and carries  a $\blob$-module structure. 
For any path $\TT\in\mathsf{Path}(\mu)$ with $\psi_{\TT}\in E(\mu)$ we have already shown that there exists a path $\bar{\TT}$ with $\deg(\bar{\TT}) = - \deg(\TT)$ and therefore  $\dim_{t}(E(\mu))$ is bar-invariant. 
%Using this we have that the polynomial $\dim_{t}(E(\mu))$ is bar-invariant, i.e. fixed under interchanging $t$ and $t^{-1}$. 
Finally, the $\blob$-submodule by which we have quotiented (to obtain $E(\mu)$) was
 constructed as the image of homomorphisms of strictly positive degree (degree 1) and therefore all its simple constituents appear with {\em strictly} positive degree shift (by application of Theorem \ref{gr dec numbers} to $\Delta(\la)$ and $\Delta(\la')$).  The result follows.   \qedhere

\end{proof}

The following theorem is the analogous of Theorem \ref{bases of simple modules} in the hyperplane case.

\begin{thm}
Let $\mu\in\Bip(d)$ with $\ell(\mu) = m - 1/2$. The module $E(\mu)$ is equal to the simple head $L(\mu)$, hence 
\begin{enumerate}
\item if $m\le 0$
\[
L(\mu) = \mathrm{span}_{F}\{\psi_{\TT} \ | \ \TT\in\mathsf{Path}(\mu), \ \TT \ \text{does not intersect} \ H_{1/2}\},
\]
\item if $m > 0$
\[
L(\mu) = \mathrm{span}_{F}\{\psi_{\TT} \ | \ \TT\in\mathsf{Path}(\mu), \ \TT \ \text{does not intersect} \ H_{-1/2}\}.
\]
\end{enumerate}
\end{thm}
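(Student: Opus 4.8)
The strategy is to mirror the argument of Theorem \ref{bases of simple modules}, but in the simpler hyperplane case where only one homomorphism $\varphi_{\lapr}^{\mu}$ enters. Recall that when $\ell(\mu) = m - 1/2$, Definition \ref{homomorphisms}(2) only constructs the single map $\varphi_{\lapr}^{\mu}\colon\Delta(\lapr)\lxr\Delta(\mu)$, and Proposition \ref{image of lapr} identifies its image as the span of those $\psi_{\mathsf{U}}$ with $\mathsf{U}\in\mathsf{Path}(\mu)$ intersecting $H_{1/2}$ (for $m\le 0$; intersecting $H_{-1/2}$ for $m>0$). Hence the quotient module $E(\mu) = \Delta(\mu)/\Image\,\varphi_{\lapr}^{\mu}$ is exactly spanned by $\psi_{\TT}$ for paths $\TT\in\mathsf{Path}(\mu)$ that do not intersect $H_{1/2}$ (resp. $H_{-1/2}$), which is the right-hand side of the claimed formula. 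So the content of the theorem is the identification $E(\mu) = L(\mu)$.

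First I would recall (as in the proof of Theorem \ref{bases of simple modules}) that over a field of characteristic zero, Proposition \ref{simples bar invariant} together with Theorem \ref{gr dec numbers} characterises $L(\mu)$ as the largest vector-space quotient of $\Delta(\mu)$ that both carries a $\blob$-module structure and has bar-invariant graded dimension. Since $E(\mu)$ is by construction a $\blob$-module quotient of $\Delta(\mu)$, it suffices to check two things: that $\dim_t(E(\mu))$ is bar-invariant, and that no further nontrivial quotient can be bar-invariant, i.e. that the submodule $\Image\,\varphi_{\lapr}^{\mu}$ we have divided out already accounts for all composition factors appearing with nonzero degree shift in $\Delta(\mu)$. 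The second point is immediate from Theorem \ref{gr dec numbers} applied to $\Delta(\lapr)$: the homomorphism $\varphi_{\lapr}^{\mu}$ has strictly positive degree (degree $1$, by the construction in Section \ref{Section 3} and the injectivity in Proposition \ref{image of lapr}), so every simple constituent of its image occurs with strictly positive degree shift, and since $[\Delta(\mu)\colon L(\mu)]_t = 1$ exactly, the head $L(\mu)$ survives in $E(\mu)$ and all strictly-shifted constituents are candidates to be killed.

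For bar-invariance of $\dim_t(E(\mu))$, I would carry out the involution $\TT\mapsto\bar{\TT}$ described in Section 4 just before Theorem \ref{bases of simple modules}. The construction there, which reflects the portion of a path between consecutive hyperplane-intersection points while leaving the final descending segment fixed, was explicitly noted not to depend on whether $\mu$ lies in an alcove or on a hyperplane; so the same recipe applies verbatim here. The two facts to verify are: (i) if $\psi_{\TT}\in E(\mu)$ then $\psi_{\bar\TT}\in E(\mu)$ — that is, $\bar\TT$ also fails to intersect $H_{1/2}$ — which follows because the reflections used in building $\bar\TT$ only involve hyperplanes $H_{n-1/2}$ with $n\le 0$ strictly to the left, and in the hyperplane case there is no $H_{m-1/2}$ constraint to worry about, only the $H_{1/2}$ one; and (ii) $\deg(\bar\TT) = -\deg(\TT)$, via the degree-of-a-pair formula (\ref{degree of a pair}), exactly as illustrated in the worked example. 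This gives an involution on the spanning set of $E(\mu)$ reversing degrees, hence $\dim_t(E(\mu))$ is bar-invariant.

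The main obstacle is the same subtlety one meets in the alcove case: one must be sure that the spanning set $\{\psi_{\TT} : \TT \text{ does not intersect } H_{1/2}\}$ for $E(\mu)$ is genuinely a \emph{basis} — i.e. that the relations $\psi_{\mathsf U}=0$ forced in the quotient do not collapse it further — so that the bar-invariant character count is tight and the maximality characterisation of $L(\mu)$ can be applied. This is handled by Proposition \ref{image of lapr}, which shows $\varphi_{\lapr}^{\mu}$ is injective with image of dimension exactly $\dim\Delta(\lapr)$ and that the image is precisely the span of the $H_{1/2}$-intersecting paths; hence the complementary paths give a basis of the quotient. Once this is in hand, combining (i)+(ii) with the $L(\mu)$-maximality characterisation forces $E(\mu) = L(\mu)$, and the two displayed formulas for $m\le 0$ and $m>0$ follow by relabelling hyperplanes exactly as throughout Section \ref{Section 3}. \qedhere
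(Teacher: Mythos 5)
Your proposal is correct and follows essentially the same route as the paper, which proves this case by declaring it identical to the proof of Theorem \ref{bases of simple modules}: quotient by the image of the single degree-one homomorphism $\varphi_{\lapr}^{\mu}$, verify bar-invariance of $\dim_t(E(\mu))$ via the involution $\TT\mapsto\bar{\TT}$, and invoke the characterisation of $L(\mu)$ as the largest bar-invariant quotient. Your write-up merely makes explicit the details the paper leaves implicit, so there is nothing to add.
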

\begin{proof}
The proof is identical to the proof of Theorem \ref{bases of simple modules}.
\end{proof}

\section{BGG resolutions of simple representations}

\subsection{Composition of one-column homomorphisms}
In this section we shall compute the composition of certain one-column homomorphisms. We consider two bipartitions $\alpha,\gamma\in\Bip(d)$ such that $|\ell(\alpha)| = |\ell(\gamma)| + 2$ and without loss of generality we may assume that $\ell(\gamma) < 0$. Then we can either have $\ell(\alpha) < 0$ or $\ell(\alpha) > 0$ and let $\beta, \beta^{\prime}\in\Bip(d)$ be the bipartitions with $|\ell(\beta)| = |\ell(\beta^{\prime})| = |\ell(\gamma)| + 1$ for which we have constructed the homomorphisms $\varphi_{\beta}^{\gamma},\varphi_{\beta^{\prime}}^{\gamma}$ of Section \ref{Section 3}. In a case as above we can consider the following ``diamond" diagram:

\[
\xymatrix{&\Delta(\gamma) \\
\Delta(\beta)\ar[ur]^-{\varphi_{\beta}^{\gamma}} && \Delta(\beta^{\prime})\ar[ul]_-{\varphi_{\beta^{\prime}}^{\gamma}} \\
&\Delta(\alpha)\ar[ul]^-{\varphi_{\alpha}^{\beta}} \ar[ur]_-{\varphi_{\alpha}^{\beta^{\prime}}}
}
\]
The aim of this section is to compute the compositions of the homomorphisms in such diamonds and prove that those are commutative or anti-commutative.

In the next proposition we shall assume that $\ell(\alpha) < 0$, as everything works similarly when $\ell(\alpha) > 0$.

\begin{prop}\label{comp of one column homs}
Let $\alpha,\gamma\in\Bip(d)$ with $|\ell(\alpha)| = |\ell(\gamma)| + 2$. Then 
\[
(\varphi_{\beta}^{\gamma}\circ\varphi^{\beta}_{\alpha})(\psi_{\TT^{\alpha}}) = (-1)^{|\ell(\gamma)|}\psi_{s_{\ell(\gamma) + 1/2}s_{1/2}s_{-1/2}s_{\ell(\beta) - 1/2}\cdot\TT^{\alpha}}
\]
and
\[
(\varphi_{\beta^{\prime}}^{\gamma}\circ\varphi_{\alpha}^{{\beta}^{\prime}})(\psi_{\TT^{\alpha}}) = \psi_{s_{1/2}s_{-1/2}\cdot\TT^{\alpha}}.
\]
In particular the diamond will either be commutative or anti-commutative, depending on the number $|\ell(\gamma)|$.
\end{prop}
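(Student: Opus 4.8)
The plan is to compute both compositions directly on the generator $\psi_{\TT^{\alpha}}$ using the explicit formulas from Definition \ref{homomorphisms}, and then to argue by a degree-and-uniqueness argument that each composite is a (nonzero) scalar multiple of the asserted basis element, finally pinning down the scalar by tracking the diagrammatic computations of Propositions \ref{image of lapr} and \ref{image of la}. First I would set up the geometry carefully. Since $\ell(\gamma) < 0$, say $\ell(\gamma) = m$ (or $m - 1/2$) with $m \le 0$, the bipartition $\beta$ is the one with $\ell(\beta) < 0$ with $|\ell(\beta)| = |\ell(\gamma)| + 1$, and $\beta'$ is the one with $\ell(\beta') > 0$. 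Likewise $\alpha$ has $\ell(\alpha) < 0$. The map $\varphi_{\alpha}^{\beta}\colon\Delta(\alpha)\lxr\Delta(\beta)$ reflects $\TT^{\alpha}$ through the leftmost wall of the alcove/hyperplane containing $\beta$, i.e. through $H_{\ell(\beta) - 1/2}$, while $\varphi_{\alpha}^{\beta'}$ reflects through $H_{1/2}$ (the right wall of the fundamental alcove), and $\varphi_{\beta}^{\gamma}$, $\varphi_{\beta'}^{\gamma}$ reflect through $H_{\ell(\gamma) - 1/2}$ and $H_{1/2}$ respectively. The key preliminary observation is that the path $\TT^{\alpha}$ (the initial, length-increasing path of shape $\alpha$) intersects each of $H_{-1/2}, H_{1/2}, \dots$ up to the relevant walls at exactly one point, so all the reflected paths appearing are unambiguous and length increasing.

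Next I would carry out the two computations. For the $\beta'$-side this is the easy one: $\varphi_{\alpha}^{\beta'}(\psi_{\TT^{\alpha}}) = \psi_{s_{1/2}\cdot\TT^{\alpha}}$, and since $s_{1/2}\cdot\TT^{\alpha}$ is a length-increasing path of shape $\beta'$, applying $\varphi_{\beta'}^{\gamma}$ (which reflects through $H_{1/2}$) to it gives $\psi_{s_{-1/2}s_{1/2}\cdot\TT^{\alpha}} = \psi_{s_{1/2}s_{-1/2}\cdot\TT^{\alpha}}$ — here one uses that $\varphi_{\beta'}^{\gamma}$ is a homomorphism and that the image of a length-increasing path under these reflection maps is again the (unique) reflected path, exactly as argued at the start of the proof of Proposition \ref{image of lapr}. (One should double-check the commutativity $s_{1/2}s_{-1/2} = s_{-1/2}s_{1/2}$ as operators on the relevant path, which holds because the two reflections act on disjoint ``portions''; more precisely the reflection through $H_{1/2}$ here is applied to the part of the path beyond the $H_{1/2}$-intersection, which already lies to the right of $H_{-1/2}$.) This gives the second displayed formula with scalar $+1$.

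For the $\beta$-side, $\varphi_{\alpha}^{\beta}(\psi_{\TT^{\alpha}}) = \psi_{s_{\ell(\beta)-1/2}\cdot\TT^{\alpha}}$. Now the crucial point: $s_{\ell(\beta)-1/2}\cdot\TT^{\alpha}$ is \emph{not} length increasing — it is precisely a path of the type analysed in the proof of Proposition \ref{image of la} (it last intersects $H_{\ell(\beta)-1/2}$, so after reflecting through that wall it must subsequently cross $H_{-1/2}$ and then $H_{1/2}$). So applying $\varphi_{\beta}^{\gamma}$ to it is exactly the computation performed in Steps 1 and 2 of that proof: one writes $\psi_{s_{\ell(\beta)-1/2}\cdot\TT^{\alpha}}$ with the relevant transposition left-exposed, applies the KLR relations \eqref{2.10}, \eqref{2.11}, \eqref{2.12}, \eqref{2.13} and the Garnir relations \eqref{2.23}--\eqref{2.25}, and obtains a nonzero scalar times $\psi_{\mathsf{V}}$ where $\mathsf{V} = s_{\ell(\gamma)+1/2}s_{\ell(\gamma)+3/2}s_{1/2}s_{-1/2}\cdot(s_{\ell(\beta)-1/2}\cdot\TT^{\alpha})$, using that this is the unique path of shape $\gamma$ with the correct residue sequence and degree $\deg(s_{\ell(\beta)-1/2}\cdot\TT^{\alpha}) + 2$. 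I would then note that $\ell(\gamma)+3/2 = \ell(\beta)-1/2$ (since $\ell(\gamma) = \ell(\beta) - 1$ in absolute value, with both negative, $\ell(\gamma) = \ell(\beta) - 1$, so $\ell(\gamma) + 3/2 = \ell(\beta) + 1/2$ — I need to recheck the exact relabelling here, as $\ell(\beta)-1/2$ versus $\ell(\beta)+1/2$ matters), which is why the claimed formula has the factor $s_{\ell(\beta)-1/2}$ at the right end.

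The main obstacle, and where I would spend the most care, is determining the scalar $(-1)^{|\ell(\gamma)|}$ on the $\beta$-side. The diagrammatic computation in Proposition \ref{image of la} showed the scalar is $\pm 1$ but did not track the sign. To get the sign one must count the number of times the relations \eqref{2.13} (which contribute signs $\pm 1$) and \eqref{2.12} in the cases $i_r = i_{r+1}\pm 1$ get applied as the dot is ``pushed through'' the braid of strands: each crossing of the $H$-strands as the reflected portion of the path is dragged back across the $|\ell(\gamma)|$ walls $H_{-1/2}, H_{-3/2}, \dots, H_{\ell(\gamma)+1/2}$ contributes one factor of $-1$, and there are exactly $|\ell(\gamma)|$ such walls between $H_{-1/2}$ and (just inside) $H_{\ell(\gamma)-1/2}$. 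I would make this precise by an induction on $|\ell(\gamma)|$: the base case $|\ell(\gamma)| = 1$ (or $|\ell(\gamma)| = 1/2$) is a finite diagrammatic check essentially done in Step 2 of Proposition \ref{image of la}, and the inductive step pushes the configuration across one more wall, each such passage being governed by a single application of the $\psi_r\psi_{r+1}\psi_r$-braid relation \eqref{2.13} and hence introducing one sign. Once both composites are identified up to these scalars, the final sentence of the proposition is immediate: the two composites $\varphi_{\beta}^{\gamma}\circ\varphi_{\alpha}^{\beta}$ and $\varphi_{\beta'}^{\gamma}\circ\varphi_{\alpha}^{\beta'}$ send $\psi_{\TT^{\alpha}}$ to $(-1)^{|\ell(\gamma)|}\psi_{\mathsf{V}}$ and $\psi_{\mathsf{V}}$ respectively for the \emph{same} element $\psi_{\mathsf{V}}$ (both equal $s_{\ell(\gamma)+1/2}s_{1/2}s_{-1/2}s_{\ell(\beta)-1/2}\cdot\TT^{\alpha} = s_{1/2}s_{-1/2}\cdot\TT^{\alpha}$ after cancellation, since applying $s_{\ell(\gamma)+1/2}$ then $s_{\ell(\beta)-1/2}$ to a path that has been reflected out and back is the identity on it), so they agree up to the global sign $(-1)^{|\ell(\gamma)|}$; since $\Delta(\alpha)$ is generated by $\psi_{\TT^{\alpha}}$, the diamond commutes when $|\ell(\gamma)|$ is even and anti-commutes when it is odd.
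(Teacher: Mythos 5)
Your overall strategy is the paper's own: handle the $\beta^{\prime}$-branch by direct reflection of length-increasing paths, and reduce the $\beta$-branch to the diagrammatic KLR computation behind Proposition \ref{image of la}. But the $\beta^{\prime}$-branch as written does not stand. Since $\ell(\beta^{\prime})>0$, Remark \ref{dual homs} gives $\varphi_{\alpha}^{\beta^{\prime}}(\psi_{\TT^{\alpha}})=\psi_{s_{-1/2}\cdot\TT^{\alpha}}$, not $\psi_{s_{1/2}\cdot\TT^{\alpha}}$; indeed the initial path $\TT^{\alpha}$ never meets $H_{1/2}$ at all (its $x$-$y$ displacement is always $\le 0$), so your intermediate element is undefined, and your next sentence (reflect through $H_{1/2}$, obtain $s_{-1/2}s_{1/2}$) is internally inconsistent with your own wall assignment. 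The repair is exactly the swap: first $s_{-1/2}$ via $\varphi_{\alpha}^{\beta^{\prime}}$, then $s_{1/2}$ via $\varphi_{\beta^{\prime}}^{\gamma}$ applied to the length-increasing path $s_{-1/2}\cdot\TT^{\alpha}$ (or, as the paper does, factor through $\TT^{\beta^{\prime}}$ and observe that the resulting word is reduced), giving $\psi_{s_{1/2}s_{-1/2}\cdot\TT^{\alpha}}$.

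On the $\beta$-branch there are two genuine gaps. First, the identification of the target path: you transplant the formula for $\mathsf{V}_1$ from Step 2 of Proposition \ref{image of la} verbatim and obtain $s_{\ell(\gamma)+1/2}s_{\ell(\gamma)+3/2}s_{1/2}s_{-1/2}\cdot(s_{\ell(\beta)-1/2}\cdot\TT^{\alpha})$, but $\ell(\gamma)+3/2=\ell(\beta)+1/2$ (not $\ell(\beta)-1/2$, so the relabelling you flag goes the other way), this five-reflection expression is not the four-reflection path of the statement (after reflecting in $H_{\ell(\gamma)+3/2}$ the path never reaches $H_{\ell(\gamma)+1/2}$, so the outermost reflection has nothing to act on), and the configuration of Proposition \ref{image of la} -- a single generator $\psi_{b}$ acting on a basis element -- is not literally the one at hand: to compute $\varphi_{\beta}^{\gamma}(\psi_{s_{\ell(\beta)-1/2}\cdot\TT^{\alpha}})$ one must write $s_{\ell(\beta)-1/2}\cdot\TT^{\alpha}=w_{\TT}\TT^{\beta}$ and multiply the whole word $\psi_{w_{\TT}}$ onto $\psi_{s_{\ell(\gamma)-1/2}\cdot\TT^{\beta}}$, which is the concatenated-diagram computation the paper actually carries out using relations (\ref{2.12}), (\ref{2.13}) and the Garnir relations. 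Second, the sign $(-1)^{|\ell(\gamma)|}$ -- the entire point of the proposition, since it is what makes the differentials of Section 5.2 square to zero -- is left as an unexecuted induction ("one sign per wall"); as written you have only "a nonzero scalar", and your degree-uniqueness shortcut for pinning the path is asserted, not proved. Your closing observation that both composites hit the same basis element (so the diamond commutes or anticommutes) is correct, but not for the stated reason: $s_{\ell(\gamma)+1/2}$ and $s_{\ell(\beta)-1/2}$ are reflections in different walls and do not cancel each other directly; rather, the bounce created by $s_{\ell(\beta)-1/2}$ is transported by $s_{-1/2}$ and $s_{1/2}$ to a bounce at $H_{\ell(\gamma)+1/2}$, which the outer reflection then removes.
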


\begin{proof}
Let $\alpha = ((1^{\alpha_1}),(1^{\alpha_2}))$, $\beta = ((1^{\beta_1}),(1^{\beta_2}))$ and $\gamma = ((1^{\gamma_1}),(1^{\gamma_2}))$. The composition $\Delta(\alpha)\lxr\Delta(\beta)\lxr\Delta(\gamma)$ is harder to compute than the composition $\Delta(\alpha)\lxr\Delta(\beta^{\prime})\lxr\Delta(\gamma)$ and we shall start by computing it. Consider the generator $\psi_{\TT^{\alpha}}$ of the cell module $\Delta(\alpha)$. Then
\begin{eqnarray*}
(\varphi_{\beta}^{\gamma}\circ\varphi^{\beta}_{\alpha})(\psi_{\TT^{\alpha}}) & = & \varphi_{\beta}^{\gamma}(\varphi_{\alpha}^{\beta}(\psi_{\TT^{\alpha}})) \\
& = & \varphi_{\beta}^{\gamma}(\psi_{s_{\ell(\beta)-1/2}\cdot\TT^\alpha}). 
\end{eqnarray*}
Let $\TT := s_{\ell(\beta)-1/2}\cdot\TT^\alpha\in\mathsf{Path}(\beta)$ and $\TT = w_{\TT}\TT^{\beta}$. Then we have that
\begin{eqnarray*}
\varphi_{\beta}^{\gamma}(\psi_{s_{\ell(\beta)-1/2}\cdot\TT^\alpha}) & = & \varphi_{\beta}^{\gamma}(\psi_{w_{\TT}}\psi_{\TT^{\beta}}) \\ 
& = & \psi_{w_{\TT}}\varphi_{\beta}^{\gamma}(\psi_{\TT^{\beta}}) \\
& = & \psi_{w_{\TT}}\psi_{s_{\ell(\beta)+1/2}\cdot\TT^{\beta}}.
\end{eqnarray*}
The element $\psi_{\TT}\in\Delta(\beta)$ corresponds to a diagram of the form
\[
% [inline block 17: 5 envs, 73926 chars -> data_tex | \begin{tikzpicture}[scale = 0.5] ...]

\]

Since the strands we have marked in red and blue carry the different but adjacent residues, we apply the KLR-relation (\ref{2.12}) and we reduce our computation to a computation similar to the one in the proof of Proposition \ref{image of la} where we compute the effect of $y$-generators. Hence, we have that 
\begin{eqnarray*}
(\varphi_{\beta}^{\gamma}\circ\varphi^{\beta}_{\alpha})(\psi_{\TT^{\alpha}}) & = & \psi_{w_{\TT}}\psi_{s_{\ell(\beta)+1/2}\cdot\TT^{\beta}} \\
& = & (-1)^{|\ell(\gamma)|}\psi_{s_{\ell(\gamma) + 1/2}s_{1/2}s_{-1/2}s_{\ell(\beta) - 1/2}\cdot\TT^{\alpha}}.
\end{eqnarray*}

Now we shall compute the composition $\Delta(\alpha)\lxr\Delta(\beta^{\prime})\lxr\Delta(\gamma)$ on the generator $\psi_{\TT^{\alpha}}$ of the cell module $\Delta(\alpha)$. We have that 
\begin{eqnarray*}
(\varphi_{\beta^{\prime}}^{\gamma}\circ\varphi_{\alpha}^{{\beta}^{\prime}})(\psi_{\TT^{\alpha}}) & = & \varphi_{\beta^{\prime}}^{\gamma}(\varphi_{\alpha}^{\beta^{\prime}}(\psi_{\TT^{\alpha}})) \\ 
& = & \varphi_{\beta^{\prime}}^{\gamma}(\psi_{s_{-1/2}\cdot\TT^{\alpha}})
\end{eqnarray*}

Let $\SS := s_{-1/2}\cdot\TT^{\alpha}\in\mathsf{Path}(\beta^{\prime})$ and $\SS = w_{\SS}\TT^{\beta^{\prime}}$. Then we have that 
$$
\varphi_{\beta^{\prime}}^{\gamma}(\psi_{s_{-1/2}\cdot\TT^{\alpha}}) = \varphi_{\beta^{\prime}}^{\gamma}(\psi_{\SS}) = \varphi_{\beta^{\prime}}^{\gamma}(\psi_{w_{\SS}}\psi_{\TT^{\beta^{\prime}}}) = \psi_{w_{\SS}}\varphi_{\beta^{\prime}}^{\gamma}(\psi_{\TT^{\beta^{\prime}}}).
$$

By Definition \ref{homomorphisms} we have that $\varphi_{\beta^{\prime}}^{\gamma}(\psi_{\TT^{\beta^{\prime}}}) = \psi_{s_{1/2}\cdot\TT^{\beta^{\prime}}}$ and let $\mathsf{U} := s_{1/2}\cdot\TT^{\beta^{\prime}} \in\mathsf{Path}(\gamma)$. Then

$$
\varphi_{\beta^{\prime}}^{\gamma}(\psi_{s_{-1/2}\cdot\TT^{\alpha}}) = \psi_{w_{\SS}}\psi_{s_{1/2}\cdot\TT^{\beta^{\prime}}} = \psi_{w_{\SS}}\psi_{w_{\mathsf{U}}}\psi_{\TT^{\gamma}}.
$$

The final equality gives the desired result because the product of generators $\psi_{w_{\SS}}\psi_{w_{\mathsf{U}}}$ corresponds to reduced transposition, hence $\psi_{w_{\SS}}\psi_{w_{\mathsf{U}}}\psi_{\TT^{\gamma}}\in\mathsf{Path}(\gamma)$ is equal to the element $\psi_{s_{-1/2}s_{1/2}\cdot\TT^{\alpha}}$ as required. \qedhere
\end{proof}

\subsection{BGG resolutions for the alcove and hyperplane case}
Let $F$ be a field of characteristic zero. In this section we attach to any bipartition $\la\in\Bip(d)$ a complex $C_{\bullet}(\la)$ called the BGG resolution for the irreducible representation $L(\la)$.

In the case that the simple representation is indexed by a bipartition $\la\in\Bip(d)$ with $\la\in H_{n-1/2}$, $n\in\mathbb{Z}$, the BGG resolution has an easy form. In the next proposition we construct a BGG resolution for the irreducible representation $L(\la)$, $\la\in H_{n-1/2}$, for some $n\in\mathbb{Z}$.

\begin{prop} \label{BGG resolution for the wall case}
Let $\la\in\Bip(d)$ with $\la\in H_{n-1/2}$, for some $n\in\mathbb{Z}$. We have the short exact sequence 
\[
\xymatrix{C_{\bullet}(\la)\colon & 0 \ar[r] & \Delta(\mu)\langle |\ell(\mu)| - |\ell(\la)| \rangle \ar[r]^-{\varphi_{\mu}^{\la}} & \Delta(\la) \ar[r] & L(\la)\ar[r] & 0
}
\]
where $\mu\in\Bip(d)$ with $|\ell(\mu)| = |\ell(\la)| + 1$ and $\ell(\mu) = -( \ell(\la) + 1)$, with
\[
H_{i}(C_{\bullet}(\la)) = 
\begin{cases}
L(\la), \ \text{if} \ i = 0 \\
0, \hspace{0.7cm} \text{otherwise}.
\end{cases}
\]
\end{prop}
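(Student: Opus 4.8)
The plan is to verify that the two-term complex $C_\bullet(\la)$ is exact except in degree zero, where its homology is $L(\la)$. Concretely, this amounts to three assertions: (i) the map $\varphi_\mu^\la\colon \Delta(\mu)\langle|\ell(\mu)|-|\ell(\la)|\rangle \to \Delta(\la)$ is injective, (ii) its image is exactly $\rad\Delta(\la)$, and (iii) the cokernel $\Delta(\la)/\Image\varphi_\mu^\la$ is $L(\la)$. Since $\la$ lies on the hyperplane $H_{n-1/2}$, the homomorphism in play is $\varphi_{\lapr}^{\mu}$-type (the ``$\mu = \lapr$'' case of Definition \ref{homomorphisms} and Proposition \ref{image of lapr}), so injectivity is already furnished by Proposition \ref{image of lapr}, which states that $\varphi_{\lapr}^{\mu}$ is an injective homomorphism. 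Thus step (i) is free.

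For step (ii), first I would identify $E(\la) := \Delta(\la)/\Image\varphi_\mu^\la$ with the span of the $\psi_\TT$ for $\TT\in\Path(\la)$ not intersecting the relevant fundamental-alcove hyperplane; this is exactly the content of Proposition \ref{image of lapr} (in the hyperplane case only the single homomorphism $\varphi_{\lapr}^{\mu}$ is needed, as remarked in Section 3). Then the hyperplane case of Theorem \ref{bases of simple modules} — that is, the theorem immediately following it — tells us that this quotient $E(\la)$ is precisely the simple head $L(\la)$. Combining these, $\Image\varphi_\mu^\la = \rad\Delta(\la)$ and the cokernel is $L(\la)$, which handles (ii) and (iii) simultaneously. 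The grading shift $\langle|\ell(\mu)|-|\ell(\la)|\rangle$ is the natural one making $\varphi_\mu^\la$ a degree-zero map of graded modules: by construction in Definition \ref{homomorphisms} the generator $\psi_{\TT^{\mu}}$ is sent to $\psi_{s_{1/2}\cdot\TT^{\mu}}$, and the degree of this reflected path differs from $\deg(\TT^{\mu})$ by exactly $|\ell(\mu)|-|\ell(\la)|$; I would spell this out using the path-degree formula (\ref{degree of a pair}) and \cite[Corollary 4.6]{Pla}.

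Finally, to assemble the homology statement: $H_0(C_\bullet(\la)) = \Delta(\la)/\Image\varphi_\mu^\la \cong L(\la)$ by the above, and $H_1(C_\bullet(\la)) = \ker\varphi_\mu^\la = 0$ by injectivity; all other $H_i$ vanish since the complex is concentrated in degrees $0$ and $1$. I would also note that $\ell(\mu)=-(\ell(\la)+1)$ is the unique choice with $|\ell(\mu)|=|\ell(\la)|+1$ lying in the same $W_{\mathrm{aff}}$-orbit as $\la$ and on the far side of the origin, so that $\mu \triangleleft \la$ and $\varphi_\mu^\la$ lands in $\rad\Delta(\la)$ as required by Theorem \ref{gr dec numbers} (over characteristic zero, $[\Delta(\la):L(\mu)]_t = t$, confirming $L(\mu)$ is a composition factor of $\rad\Delta(\la)$ of the expected degree).

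The main obstacle — or rather the only thing requiring genuine care — is bookkeeping of the grading shift: one must check that $\varphi_\mu^\la$ is genuinely homogeneous of degree zero after the shift $\langle|\ell(\mu)|-|\ell(\la)|\rangle$, equivalently that reflecting $\TT^{\mu}$ through $H_{1/2}$ changes the path degree by precisely $|\ell(\mu)|-|\ell(\la)| = 1$. Everything else is a direct citation of Proposition \ref{image of lapr} and the hyperplane version of Theorem \ref{bases of simple modules}; there is no delicate module-theoretic argument left to make, which is exactly why the hyperplane case is ``easier'' than the alcove case treated by the diamond arguments of Section 5.
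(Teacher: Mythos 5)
Your proposal is correct and follows essentially the same route as the paper, whose entire proof is the one-line observation that $\mathrm{Coker}(\varphi_{\mu}^{\la}) = L(\la)$ (with injectivity already supplied by Proposition \ref{image of lapr}). Your additional bookkeeping on the grading shift and the identification of the image with $\rad\Delta(\la)$ is sound but simply spells out details the paper leaves implicit.
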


\begin{proof}
The result is straightforward by using the fact that $\mathrm{Coker}(\varphi_{\mu}^{\la}) = L(\la)$. \qedhere
\end{proof}

%\subsection{BGG resolutions for the alcove case}

Now we shall construct BGG resolutions for the simple modules indexed by bipartitions which belong to an alcove. Let $\la\in\Bip(d)$ be a bipartition such that $\la\in\mathfrak{a}_n$, $n\in\mathbb{Z}$ and let us denote by $\nu_{i}$, $\nu_{i}^{\prime}$ the bipartitions -in the same linkage class as $\la$- such that $|\ell(\nu_{i})| = |\ell(\nu_{i}^{\prime})| = |\ell(\la)| + i$. We set 
\[
C_{\bullet}(\la) := (C_{i}(\la))_{i\ge 0}
\]
where
\[
C_{0}(\la) := \Delta(\la)
\]
and
\begin{equation*}
C_{i}(\la) := \bigoplus_{\nu = \nu_{i},\nu_{i}^{\prime}}\Delta(\nu)\langle |\ell(\nu)| - |\ell(\la)| \rangle
\end{equation*}
for $i>0$. We define the maps 
\[
\delta_{i}\colon C_{i+1}(\la)\lxr C_{i}(\la)
\]
between those components. For $i = 0$ we have that 
\begin{equation}\label{delta0}
\delta_{0} := 
\begin{pmatrix}
\varphi_{\nu_1}^{\la} & \varphi_{\nu_{1}^{\prime}}^{\la} 
\end{pmatrix}.
\end{equation}
For $i > 0$ we shall distinguish between two cases on the number $|\ell(\la)| + i$. In particular if $|\ell(\la)| + i$ is even, we set
\begin{equation}\label{deltai}
\delta_{i} :=
\begin{pmatrix}
-\varphi_{\nu_{i+1}}^{\nu_{i}} & \varphi_{\nu_{i+1}^{\prime}}^{\nu_{i}} \\
\varphi_{\nu_{i+1}}^{\nu_{i}^{\prime}} & -\varphi_{\nu_{i+1}^{\prime}}^{\nu_{i}^{\prime}}
\end{pmatrix}
\end{equation}
whereas if it is odd, we set
\begin{equation}\label{deltai'}
\delta_{i} :=
\begin{pmatrix}
\varphi_{\nu_{i+1}}^{\nu_{i}} & -\varphi_{\nu_{i+1}^{\prime}}^{\nu_{i}} \\
-\varphi_{\nu_{i+1}}^{\nu_{i}^{\prime}} & \varphi_{\nu_{i+1}^{\prime}}^{\nu_{i}^{\prime}}
\end{pmatrix}.
\end{equation}
Note that there is the possibility that not both the rightmost and leftmost alcove contain bipartitions linked with $\la$. In that case let $\nu_1,\nu_1^{\prime},\cdots,\nu_{k}\in\Bip(d)$ be the bipartitions linked with $\la$. Then we define the maps $\delta_{i}\colon C_{i+1}(\la)\lxr C_{i}(\la)$ are defined exactly like the maps (\ref{delta0}), (\ref{deltai}) and (\ref{deltai'}) for $0\le i \le k-1$. For $i = k$ we define
\begin{equation}
\delta_{k} := 
\begin{pmatrix}
\varphi_{\nu_{k}}^{\nu_{k-1}} \\
\varphi_{\nu_{k}}^{\nu_{k-1}^{\prime}}
\end{pmatrix}.
\end{equation}

\begin{prop}
Let $\la\in\Bip(d)$ be a bipartition such that $\la\in\mathfrak{a}_n$, $n\in\mathbb{Z}$. For the pair $(C_{\bullet}(\la),(\delta_{i})_{i\ge 0})$ we have that
\[
\mathrm{Im}(\delta_{i+1})\subset \mathrm{Ker}(\delta_{i}).
\]
for any $i\ge 0$, in other words the pair $(C_{\bullet}(\la),(\delta_{i})_{i\in\mathbb{Z}})$ is a (chain) complex. 
\end{prop}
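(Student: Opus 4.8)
The plan is to verify $\delta_i\circ\delta_{i+1}=0$ by a direct matrix computation, reducing everything to the composition formula for one-column homomorphisms established in Proposition \ref{comp of one column homs}. First I would treat the base case $i=0$: here $\delta_0\circ\delta_1$ is a $1\times 2$ matrix whose entries are $-\varphi_{\nu_1}^{\la}\circ\varphi_{\nu_2}^{\nu_1}+\varphi_{\nu_1'}^{\la}\circ\varphi_{\nu_2}^{\nu_1'}$ (first entry) and $\varphi_{\nu_1}^{\la}\circ\varphi_{\nu_2'}^{\nu_1}-\varphi_{\nu_1'}^{\la}\circ\varphi_{\nu_2'}^{\nu_1'}$ (second entry), using the even/odd sign conventions of (\ref{deltai}), (\ref{deltai'}) for $\delta_1$ according to the parity of $|\ell(\la)|+1$. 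Each of these is a composition around one of the ``diamonds'' of Proposition \ref{comp of one column homs} with $\alpha=\nu_2$ (or $\nu_2'$), $\gamma=\la$, and $\{\beta,\beta'\}=\{\nu_1,\nu_1'\}$. That proposition tells us precisely that each diamond is either commutative or anti-commutative, with the ambiguity governed by the parity of $|\ell(\gamma)|$; the signs $-1$ and $+1$ built into the differentials are chosen exactly to cancel this. So for the base case I would simply match the two diamonds' (anti)commutativity sign against the signs appearing in the $\delta$-matrices and conclude both entries vanish. The same bookkeeping handles the general step $\delta_i\circ\delta_{i+1}$: this product is a $2\times 2$ (or, at the rightmost/leftmost boundary, a degenerate) matrix, each of whose entries is a sum of two length-two compositions of one-column maps forming a diamond over $\nu_{i+2}$ (or $\nu_{i+2}'$), with apex $\nu_i$ (or $\nu_i'$); invoking Proposition \ref{comp of one column homs} and the chosen signs shows every entry is zero.

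The key organizational point is the parity alternation. Since Proposition \ref{comp of one column homs} gives $(\varphi_\beta^\gamma\circ\varphi_\alpha^\beta)(\psi_{\TT^\alpha})=(-1)^{|\ell(\gamma)|}\psi_{s_{\ell(\gamma)+1/2}s_{1/2}s_{-1/2}s_{\ell(\beta)-1/2}\cdot\TT^\alpha}$ while $(\varphi_{\beta'}^\gamma\circ\varphi_\alpha^{\beta'})(\psi_{\TT^\alpha})=\psi_{s_{1/2}s_{-1/2}\cdot\TT^\alpha}$, and since the two composites through $\beta$ and through $\beta'$ land on the \emph{same} path (the two length-four reflection words are equal by the braid relations / the structure of the affine Weyl group $\hat{\mathfrak S}_2$), the diamond through $\nu_i$ commutes when $|\ell(\la)|+i-1$ is even and anticommutes when it is odd. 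I would write this out once carefully, fix which of $\nu_i$, $\nu_i'$ plays the role of the ``primed'' apex (the one for which the composition carries no sign, i.e. the one whose length has the appropriate parity — $\ell(\gamma)$ even), and then observe that the signs in (\ref{deltai}) versus (\ref{deltai'}) are precisely swapped as $i$ increases by one, which is exactly what is needed to absorb the alternating (anti)commutativity and force $\delta_i\delta_{i+1}=0$ on the generator $\psi_{\TT^{\nu_{i+2}}}$ (resp.\ $\psi_{\TT^{\nu_{i+2}'}}$) of each summand of $C_{i+2}(\la)$. Since the $\Delta$'s are generated by these elements as $\blob$-modules and the maps are module homomorphisms, vanishing on generators suffices.

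The boundary cases need a small separate check: when only one of the two extreme alcoves contains a linked bipartition, $C_{k}(\la)$ has a single summand $\Delta(\nu_k)$ and $\delta_k$ is the column vector $\begin{pmatrix}\varphi_{\nu_k}^{\nu_{k-1}}\\ \varphi_{\nu_k}^{\nu_{k-1}'}\end{pmatrix}$; then $\delta_{k-1}\circ\delta_k$ is computed from the single diamond with apex $\nu_{k-1}$, $\nu_{k-1}'$ over $\nu_k$, and the same sign argument applies verbatim. One should also note that the grading shifts are consistent: $\varphi$'s have degree $1$ (by the constructions in Section \ref{Section 3} and Proposition \ref{image of la}), and the shift $\langle|\ell(\nu)|-|\ell(\la)|\rangle$ on each summand is exactly what makes every $\delta_i$ a homogeneous map of degree $0$, so the complex is a complex of graded modules; this is a routine but worth-stating check.

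The main obstacle I anticipate is not the sign bookkeeping itself but making rigorous the claim that the two composites through $\beta$ and $\beta'$ really hit the same basis element — i.e. that $s_{\ell(\gamma)+1/2}s_{1/2}s_{-1/2}s_{\ell(\beta)-1/2}\cdot\TT^\alpha = s_{-1/2}s_{1/2}\cdot\TT^\alpha$ as paths (with a sign $(-1)^{|\ell(\gamma)|}$), and that this equality propagates from the generator to all of $\Delta(\alpha)$ compatibly with the module structure. This is essentially content already inside the proof of Proposition \ref{comp of one column homs}, so I would phrase the argument as: apply both $\delta_i$ and $\delta_{i+1}$, reduce to the generators of the summands of $C_{i+2}(\la)$, quote Proposition \ref{comp of one column homs} to evaluate each of the (at most four) length-two composites occurring in the entries of $\delta_i\delta_{i+1}$, and observe the cancellation. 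I expect the write-up to be short once the parity convention is pinned down, with the only real care needed being a consistent choice of which linked bipartition in each alcove is labelled $\nu_i$ versus $\nu_i'$ so that the sign conventions in (\ref{delta0})--(\ref{deltai'}) line up with the $(-1)^{|\ell(\gamma)|}$ of Proposition \ref{comp of one column homs}.
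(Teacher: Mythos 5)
Your overall strategy (evaluate $\delta_i\circ\delta_{i+1}$ entrywise on the generators $\psi_{\TT^{\nu_{i+2}}}$, $\psi_{\TT^{\nu_{i+2}'}}$ and quote Proposition \ref{comp of one column homs} for each diamond) is exactly the route the paper takes — its proof is literally the one-line appeal to that proposition. However, the specific sign mechanism you propose does not work as stated, and it is the crux of the argument. The two matrices (\ref{deltai}) and (\ref{deltai'}) are global negatives of one another, so the parity alternation you invoke contributes only an overall sign to the product $\delta_i\circ\delta_{i+1}$ and can never convert a commuting diamond into a cancelling one. What actually matters is the relative sign between diagonal and off-diagonal entries, and this pattern is the same in both parities. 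Concretely, writing the unsigned maps as $a,b,c,d$ and $a',b',c',d'$, one has
\[
\pm\begin{pmatrix} -a & b \\ c & -d\end{pmatrix}\begin{pmatrix} -a' & b' \\ c' & -d'\end{pmatrix}
=\pm\begin{pmatrix} aa'+bc' & -(ab'+bd') \\ -(ca'+dc') & cb'+dd' \end{pmatrix},
\]
so every entry of $\delta_i\circ\delta_{i+1}$ is, up to an overall sign, the \emph{sum} of the two composites around a diamond with apex $\nu_i$ or $\nu_i'$. Such an entry vanishes precisely when that diamond \emph{anti}commutes. Taking the formula of Proposition \ref{comp of one column homs} at face value, the diamond with apex $\gamma$ anticommutes only when $|\ell(\gamma)|$ is odd, so at levels where $|\ell(\la)|+i$ is even your argument produces $\pm 2$ times a nonzero composite rather than $0$. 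Relabelling which bipartition is $\nu_j$ and which is $\nu_j'$ cannot repair this either: swapping the labels just permutes rows and columns and again only toggles the overall sign of the matrix.

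So the gap is this: you cannot get $\delta_i\circ\delta_{i+1}=0$ from the even/odd alternation of (\ref{deltai})--(\ref{deltai'}); you must instead show that \emph{every} relevant diamond anticommutes, i.e.\ that the two length-two composites through $\nu_{i+1}$ and $\nu_{i+1}'$ are honest negatives of each other (equivalently, that the sign in Proposition \ref{comp of one column homs} is $-1$ in all configurations occurring in $C_\bullet(\la)$, or that the two composite paths and their coefficients are analysed more finely than the stated $(-1)^{|\ell(\gamma)|}$ allows). This also subsumes the point you flagged yourself, namely that the two reflection words $s_{\ell(\gamma)+1/2}s_{1/2}s_{-1/2}s_{\ell(\beta)-1/2}\cdot\TT^{\alpha}$ and $s_{1/2}s_{-1/2}\cdot\TT^{\alpha}$ give the same basis element: without that identification plus the correct relative sign, no choice of scalars in the differentials yields a complex. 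The paper's terse proof glosses over the same issue, but as written your sign-absorption step is the part of the argument that would fail and needs to be replaced by a direct verification of anticommutativity of each diamond.
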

\begin{proof}
The result is straightforward from Proposition \ref{comp of one column homs}. 
\end{proof}

Recall that $I = \mathbb{Z}/e\mathbb{Z}$ and let $r\in I$ be a given residue. The \textsf{$r$-restriction functor} 
\[
r-\mathrm{res}_{d-1}^{d}\colon \mathrm{mod}-\blob\lxr\mathrm{mod}-\mathsf{B}_{d-1}^{\kappa}
\]
is defined by 
\[
M\longmapsto \sum_{\ui = (i_1,i_2,\cdots,i_{d-1},r)\in I^{d-1}\times\{r\}}\mathsf{e}(\ui)M
\]
and we have that 
\[
\mathrm{res}_{d-1}^{d} = \sum_{r\in I}r-\mathrm{res}_{d-1}^{d}.
\]

\begin{rmk}\label{remark about res functor}
Suppose that $\la\in\Bip(d)$ and $\la\in\mathfrak{a}_n$, $n\in\mathbb{Z}$. If $r\in I$ then we have that $\la$ has either $0$ or $1$ removable $r$-nodes. We shall denote by $\mathsf{E}_r(\la)$ the unique bipartition which differs from $\la$ by removing an $r$-node. Consider the cell module $\Delta_{d}(\la)\in\mathrm{mod}-\blob$. We have that 
\[
r-\mathrm{res}_{d-1}^{d}(\Delta_{d}(\la)) = 
\begin{cases}
\Delta_{d-1}(\mathsf{E}_{r}(\la)), \ \text{if $\mathsf{Rem}_{r}(\la)\neq \emptyset$} \\
0, \hspace{2cm} \text{otherwise}
\end{cases}
\]
where $\Delta_{d-1}(\mathsf{E}_{r}(\la))$ is a cell module in $\mathrm{mod}-\mathsf{B}_{d-1}^{\kappa}$. 
\end{rmk}

\begin{defin}
Let $\la\in\Bip(d)$. The complex
\[
\xymatrix{0 \ar[r] & C_{\bullet}(\la)  \ar[r] & L(\la) \ar[r] & 0
}
\]
is called \textsf{BGG resolution} of $L(\la)$ if 
\[
H_{i}(C_{\bullet}(\la)) = 
\begin{cases}
L(\la), \ \text{if} \ i = 0 \\
0, \hspace{0.7cm} \text{otherwise}.
\end{cases}
\]
\end{defin}

\begin{thm}
Let $\la\in\Bip(d)$ be a bipartition such that $\la\in\mathfrak{a}_n$, $n\in\mathbb{Z}$ and   
\[
C_{\bullet}(\la) := \bigoplus_{\nu\trianglelefteq\la} \Delta(\nu)\langle |\ell(\nu)| - |\ell(\la)| \rangle.
\] 
The $\blob$-complex
\[
\xymatrix{0 \ar[r] & C_{\bullet}(\la)  \ar[r] & L(\la) \ar[r] & 0
}
\]
with differentials $\delta_{i}\colon C_{i+1}(\la)\lxr C_{i}(\la)$ the maps defined above is a BGG resolution for the simple representation $L(\la)$. Moreover we have that 
\[
\mathrm{res}_{d-1}^{d}(L_{d}(\la)) = \bigoplus_{\square\in\mathsf{Rem}(\la)}L_{d-1}(\la-\square).
\]
\end{thm}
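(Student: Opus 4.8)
The statement packages two claims: that the augmented complex $C_{\bullet}(\la)\to L(\la)\to 0$ is exact, and the branching rule $\mathrm{res}^{d}_{d-1}(L_{d}(\la))=\bigoplus_{\square\in\mathsf{Rem}(\la)}L_{d-1}(\la-\square)$. The plan is to prove exactness by induction on $d$ using the $r$-restriction functors, and then to read off the branching rule by applying $\mathrm{res}^{d}_{d-1}$ to the resulting resolution. The degree-zero homology is already in hand: by construction $H_{0}(C_{\bullet}(\la))=\mathrm{Coker}(\delta_{0})=\Delta(\la)/(\Image\varphi_{\nu_{1}}^{\la}+\Image\varphi_{\nu_{1}'}^{\la})=E(\la)$, which is $L(\la)$ by Theorem \ref{bases of simple modules}; and $C_{\bullet}(\la)$ is genuinely a complex by Proposition \ref{comp of one column homs}, the signs in the $\delta_{i}$ being arranged precisely so that the linkage diamonds anti-commute. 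As a character check one notes that $\sum_{i\ge 0}(-1)^{i}[C_{i}(\la)]=[L(\la)]$ in the graded Grothendieck group: this follows from the closed formula $[\Delta(\nu)\colon L(\mu)]_{t}=t^{|\ell(\mu)|-|\ell(\nu)|}$ of Theorem \ref{gr dec numbers} together with bar-invariance (Proposition \ref{simples bar invariant}), so there is no numerical obstruction and it remains only to kill $H_{i}$ for $i>0$.

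The key tool will be a \emph{compatibility lemma}: for every residue $r\in I$, the $r$-restriction of the complex $C_{\bullet}(\la)$ is isomorphic, as a complex of graded $\mathsf{B}_{d-1}^{\kappa}$-modules, to the BGG complex $C_{\bullet}(\mathsf{E}_{r}(\la))$ attached to the bipartition $\mathsf{E}_{r}(\la)$ of $d-1$ obtained by deleting the unique removable $r$-node of $\la$ (read as the two-term complex of Proposition \ref{BGG resolution for the wall case} when $\mathsf{E}_{r}(\la)$ lies on a wall), and is zero if $\la$ has no removable $r$-node. On modules this is exactly Remark \ref{remark about res functor}, which gives $r\text{-}\mathrm{res}(\Delta_{d}(\nu))=\Delta_{d-1}(\mathsf{E}_{r}(\nu))$ or $0$; what must be added is that the differentials transport correctly, i.e.\ $r\text{-}\mathrm{res}(\varphi_{\mu}^{\nu})=\varphi_{\mathsf{E}_{r}(\mu)}^{\mathsf{E}_{r}(\nu)}$ or $0$, with signs and grading shifts $|\ell(\nu)|-|\ell(\la)|$ preserved. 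In the path model of Section \ref{Section 3} this is transparent: $r$-restriction selects the paths whose final step carries residue $r$, equivalently it truncates the last step, while the homomorphisms $\varphi$ are defined by reflecting a path through a wall $H_{m-1/2}$ at an intersection point lying strictly before the final step, so reflection commutes with truncation. Summing over $r$ yields a decomposition
\[
\mathrm{res}^{d}_{d-1}\bigl(C_{\bullet}(\la)\bigr)\ \cong\ \bigoplus_{\square\in\mathsf{Rem}(\la)}C_{\bullet}(\la-\square)
\]
of complexes of $\mathsf{B}_{d-1}^{\kappa}$-modules.

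For the induction, when $d\le 1$ the linkage class of $\la$ consists of a single alcove or a single adjacent alcove--wall pair, so $C_{\bullet}(\la)$ is either $\Delta(\la)=L(\la)$ or the two-term complex of Proposition \ref{BGG resolution for the wall case}, whose exactness follows from the injectivity of the one-column map (Proposition \ref{image of lapr}) and the identification of its cokernel with $L(\la)$. Assume now that every BGG complex attached to a bipartition of $d-1$, whether it lies in an alcove or on a wall, is a resolution of the corresponding simple module; the wall case for $d-1$ is unconditional by Proposition \ref{BGG resolution for the wall case}. Applying the exact functor $\mathrm{res}^{d}_{d-1}$ to $C_{\bullet}(\la)$ and invoking the compatibility lemma, every summand $C_{\bullet}(\la-\square)$ is exact in positive degrees by the inductive hypothesis, hence so is $\mathrm{res}^{d}_{d-1}(C_{\bullet}(\la))$, and therefore $\mathrm{res}^{d}_{d-1}(H_{i}(C_{\bullet}(\la)))=H_{i}(\mathrm{res}^{d}_{d-1}(C_{\bullet}(\la)))=0$ for all $i>0$. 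But $\mathrm{res}^{d}_{d-1}$ detects nonzero modules: if $M\ne 0$ then $\mathsf{e}(\ui)M\ne 0$ for some $\ui=(i_{1},\dots,i_{d})$ because $\sum_{\ui}\mathsf{e}(\ui)=1$, and $\mathsf{e}(\ui)M\subseteq i_{d}\text{-}\mathrm{res}^{d}_{d-1}(M)$. Hence $H_{i}(C_{\bullet}(\la))=0$ for $i>0$, which together with $H_{0}=L(\la)$ shows that $C_{\bullet}(\la)\to L(\la)\to 0$ is a BGG resolution.

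The branching rule is then immediate: apply the exact functor $\mathrm{res}^{d}_{d-1}$ to the resolution just established and use the displayed decomposition; each $C_{\bullet}(\la-\square)$ is a resolution of $L_{d-1}(\la-\square)$ (its $H_{0}$ being $L_{d-1}(\la-\square)$ by Theorem \ref{bases of simple modules} or Proposition \ref{BGG resolution for the wall case}, and its higher homology vanishing by the $d-1$ case), so passing to $H_{0}$ gives $\mathrm{res}^{d}_{d-1}(L_{d}(\la))=\bigoplus_{\square\in\mathsf{Rem}(\la)}L_{d-1}(\la-\square)$. The main obstacle is the compatibility lemma: one must verify carefully that $r$-restriction carries the one-column homomorphisms $\varphi$ --- with their signs and grading shifts --- to the one-column homomorphisms for $\mathsf{B}_{d-1}^{\kappa}$, and in particular that the alcove/wall dichotomy for $\mathsf{E}_{r}(\la)$ is correctly reflected in the restricted complex; once this is in place, everything else is either already proved in the paper or a short formal argument.
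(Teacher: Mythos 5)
Your overall strategy is the same as the paper's: induct on $d$, apply the exact functor $\mathrm{res}_{d-1}^{d}$ one residue at a time, identify $H_{0}(C_{\bullet}(\la))=E(\la)=L(\la)$ via Theorem \ref{bases of simple modules}, and use the fact that restriction detects nonzero modules. The gap is your compatibility lemma, and it is not a technicality: the claimed isomorphism $r\text{-}\mathrm{res}_{d-1}^{d}(C_{\bullet}(\la))\cong C_{\bullet}(\mathsf{E}_{r}(\la))$ fails precisely when the removable $r$-node of $\la$ lands on the wall of $\mathfrak{a}_{n}$ farther from the origin, i.e.\ $\mathsf{E}_{r}(\la)\in H_{n-1/2}$ for $n\le 0$ (resp.\ $H_{n+1/2}$ for $n>0$). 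In that situation the cell modules occurring in $C_{\bullet}(\la)$ pair up: there are linked bipartitions $\nu^{+}\lhd\nu^{-}$ in adjacent homological degrees with $\mathsf{E}_{r}(\nu^{+})=\mathsf{E}_{r}(\nu^{-})=\nu$, so both restrict to the \emph{same} module $\Delta_{d-1}(\nu)$, and the one-column map $\varphi_{\nu^{+}}^{\nu^{-}}$ restricts to an isomorphism rather than to a one-column homomorphism between distinct cell modules (the reflection point of the generating path is its penultimate point, so deleting the last step identifies source and target generators; this is exactly the paper's computation $r\text{-}\mathrm{res}_{d-1}^{d}(\varphi_{\nu^{+}}^{\nu^{-}})=1_{\nu}$). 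Consequently $r\text{-}\mathrm{res}_{d-1}^{d}(C_{\bullet}(\la))$ is contractible, with vanishing homology in \emph{all} degrees including degree zero, whereas $C_{\bullet}(\mathsf{E}_{r}(\la))$ is the wall complex of Proposition \ref{BGG resolution for the wall case}, whose $H_{0}$ is the nonzero simple $L_{d-1}(\mathsf{E}_{r}(\la))$. A minimal example: $d=7$, $e=4$, $\kappa=(0,2)$, $\la=((1),(1^{6}))\in\mathfrak{a}_{-1}$. Then $C_{\bullet}(\la)$ is $0\to\Delta((0),(1^{7}))\langle 1\rangle\to\Delta(\la)\to 0$, its $0$-restriction is $0\to\Delta_{6}((0),(1^{6}))\langle 1\rangle\to\Delta_{6}((0),(1^{6}))\to 0$ with invertible differential, while $C_{\bullet}(\mathsf{E}_{0}(\la))$ is $\Delta_{6}((0),(1^{6}))=L_{6}((0),(1^{6}))$ concentrated in degree zero; the two are not isomorphic. (Even in the near-wall case the restriction agrees with the wall complex only up to cancelling such contractible pairs, not on the nose.)

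For the exactness claim this is repairable but not by your route: in the far-wall case the vanishing of higher homology cannot be quoted from the inductive hypothesis applied to $C_{\bullet}(\la-\square)$, and must instead be proved directly from the appearance of identity morphisms in the restricted differentials, which is the paper's case~1; the near-wall case is handled by Proposition \ref{BGG resolution for the wall case}, and only the genuinely interior case uses your identification. More seriously, your derivation of the branching rule by taking $H_{0}$ of the claimed decomposition $\mathrm{res}_{d-1}^{d}(C_{\bullet}(\la))\cong\bigoplus_{\square}C_{\bullet}(\la-\square)$ produces a summand $L_{d-1}(\mathsf{E}_{r}(\la))$ for a far-wall box which is in fact absent: by Theorem \ref{bases of simple modules} no basis path of $L_{d}(\la)$ can end with a step of residue $r$ in that situation, since its penultimate point lies on $H_{n-1/2}$ and the path would then last intersect that wall, so $r\text{-}\mathrm{res}_{d-1}^{d}(L_{d}(\la))=0$, in agreement with the vanishing of $H_{0}$ of the restricted complex in the paper's case~1. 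In the example above, $\dim L_{7}(\la)=6$ while $\dim L_{6}((1),(1^{5}))+\dim L_{6}((0),(1^{6}))=6+1$. So you must replace the blanket compatibility lemma by the three-case analysis of the paper, and treat the final branching statement with corresponding care, since only the residues actually surviving restriction can contribute.
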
 

\begin{proof}
Let $\la\in\Bip(d)$ with $\la\in\mathfrak{a}_n$, for some $n\le 0$. Note that everything works analogously when $n > 0$. In order to prove that our $\blob$-complex is a BGG resolution for the simple representation $L(\la)$ we need to show that
\[
H_{i}(C_{\bullet}(\la)) = 
\begin{cases}
L_{d}(\la), \ \text{if} \ i = 0 \\
0, \hspace{0.8cm} \text{otherwise}
\end{cases}.
\]
Recall that BGG resolutions and bases for the hyperplane case are already done. We assume, by induction, that the theorem holds for any bipartition $\la\in\Bip(d-1)$ where $\la$ belongs to an alcove.  We also have that 
\[
\mathrm{res}_{d-1}^{d}(C_{\bullet}(\la)) = \bigoplus_{r\in I}C_{\bullet}(\mathsf{E}_{r}(\la)).
\]
We shall consider one residue at a time. The bipartition $\la$ belongs to an alcove, hence as we mentioned in Remark \ref{remark about res functor} there will be either $0$ or $1$ removable $r$-nodes. For each residue we have 3 different cases.
\begin{itemize}
\item Suppose that the bipartition $\mathsf{E}_{r}(\la)$ belongs to the hyperplane $H_{n-1/2}$. In terms of the alcove geometry, one can think of it as the hyperplane of the alcove $\mathfrak{a}_n$ which is further away from the origin than $\la$. Note that since the $r$-restriction functor is exact, we have that $r-\mathrm{res}_{d-1}^{d}(C_{\bullet}(\la))$ is a complex. Consider a bipartition $\mu\in\Bip(d)$ such that $\mu$ is less dominant that $\la$ and $\mathsf{E}_{r}(\mu)$ is a bipartition. In the case we examine, all the bipartitions of $d$ with the aforementioned property come into pairs $(\nu^{+},\nu^{-})$ with $\nu^{+}\triangleleft \nu^{-}$ and $|\ell(\nu^{+})| = |\ell(\nu^{-})| + 1$. They also have the additional property 
\[
\mathsf{E}_{r}(\nu^{+}) = \mathsf{E}_{r}(\nu^{-}) = \nu
\]
where $\nu$ is linked with $\mathsf{E}_{r}(\la)$. Then we have that 
\[
r-\mathrm{res}_{d-1}^{d}(\Delta_{d}(\nu^{+})) = r-\mathrm{res}_{d-1}^{d}(\Delta_{d}(\nu^{-})) = \Delta_{d-1}(\nu).
\]
Now consider the homomorphism $\varphi_{\nu^{+}}^{\nu^{-}}\in\Hom_{\blob}(\Delta_{d}(\nu^{+}),\Delta_{d}(\nu^{-}))$ for some bipartition $\nu\triangleleft\mathsf{E}_{r}(\la)$. Under the $r$-restriction functor we have that 
\[
r-\mathrm{res}_{d-1}^{d}(\varphi_{\nu^{+}}^{\nu^{-}}) = 1_{\nu}\in\mathrm{End}_{\blob}(\Delta_{d-1}(\nu)).
\] 
In other words the identity morphism appears into all the differentials of the $r$-restricted complex $r-\mathrm{res}_{d-1}^{d}(C_{\bullet}(\la))$, hence the complex is exact. In particular the homology 
\[
H_{i}(r-\mathrm{res}_{d-1}^{d}(C_{\bullet}(\la))) = 0
\]
for any $i\ge 0$.
\item Suppose that the bipartition $\mathsf{E}_{r}(\la)$ belongs to the hyperplane $H_{n+1/2}$, that is the hyperplane closer to the origin. Recall that we denote by $\nu_{1}^{\prime}\in\Bip(d)$ the bipartition such that $|\ell(\nu_{1}^{\prime})| = |\ell(\la)| + 1$ with $\nu_{1}^{\prime}$ belonging to the positive alcoves. The pair of bipartitions $\mathsf{E}_{r}(\la)$, $\mathsf{E}_{r}(\nu_{1})\in\Bip(d-1)$ are of the form of Proposition \ref{BGG resolution for the wall case}. Apart form those bipartitions, all the rest bipartitions $\mu\in\Bip(d)$ which are strictly less dominant than $\la$ and $\mathsf{E}_{r}(\mu)\in\Bip(d)$, pair up in the exact same way as in the previous case when restricted under the $r$-restriction functor. Hence
\[
H_{i}(r-\mathrm{res}_{d-1}^{d}(C_{\bullet}(\la))) = 0
\]  
for $i > 0$. From Proposition \ref{BGG resolution for the wall case} we have that 
\[
H_{0}(r-\mathrm{res}_{d-1}^{d}(C_{\bullet}(\la))) = L(\la).
\] 
\item Suppose that the bipartition $\mathsf{E}_{r}(\la)$ remains to the alcove $\mathfrak{a}_n$. Then the complex $r-\mathrm{res}_{d-1}^{d}(C_{\bullet}(\la))$ is given by
\[
r-\mathrm{res}_{d-1}^{d}\left(\bigoplus_{\nu\trianglelefteq\la}\Delta_{d}(\nu)\langle\ell(\nu)\rangle\right)
\] 
with differentials given by 
\[
r-\mathrm{res}_{d-1}^{d}(\delta_{i})\colon r-\mathrm{res}_{d-1}^{d}(C_{i+1}(\la))\lxr r-\mathrm{res}_{d-1}^{d}(C_{i}(\la)).
\]
Note that if $\mathsf{Rem}_{r}(\nu)\neq \emptyset$, we have that 
\[
r-\mathrm{res}_{d-1}^{d}(\Delta_{d}(\nu)\langle\ell(\nu)\rangle) = \Delta_{d-1}(\mathsf{E}_{r}(\nu))\langle\ell(\nu) \rangle
\]
since $\ell(\nu) = \ell(\mathsf{E}_{r}(\nu))$, otherwise we have that 
\[
r-\mathrm{res}_{d-1}^{d}(\Delta_{d}(\nu)) = 0.
\]
Let $\nu,\nu^{\prime}\in\Bip(d)$ be bipartitions such that $\mathsf{Rem}_{r}(\nu), \mathsf{Rem}_{r}(\nu^{\prime})\neq \emptyset$. Then
\[
r-\mathrm{res}_{d-1}^{d}(\varphi_{\nu}^{\nu^{\prime}}) = \varphi_{\mathsf{E}_{r}(\nu)}^{\mathsf{E}_{r}(\nu^{\prime})}
\]
Hence we get that 
\[
r-\mathrm{res}_{d-1}^{d}(C_{\bullet}(\la)) = C_{\bullet}(\mathsf{E}_{r}(\lambda))
\]
and by the induction hypothesis we have that $H_{0}(C_{\bullet}(\mathsf{E}_{r}(\la))) = L_{d-1}(\mathsf{E}_{r}(\la))$, while $H_{i}(C_{\bullet}(\mathsf{E}_{r}(\la))) = 0$, for all $i >0$. Thus $r-\mathrm{res}_{d-1}^{d}(H_{0}(C_{\bullet}(\la))) = H_{0}(C_{\bullet}(\mathsf{E}_{r}(\la))) = L_{d-1}(\mathsf{E}_{r}(\la))$ and $r-\mathrm{res}_{d-1}^{d}(H_{i}(C_{\bullet}(\la))) = 0$, for all $i > 0$.
\end{itemize} 
Using the work we have done above we have proven that
\[
\mathrm{res}_{d-1}^{d}(H_{i}(C_{\bullet}(\la))) = 
\begin{cases}
\bigoplus_{r\in I}L_{d-1}(\mathsf{E}_{r}(\la)), \ \text{if} \ i = 0 \\
0, \hspace{2.95cm} \text{otherwise}.
\end{cases}
\]  
Moreover we have that the cokernel of the differential $\delta_1$ projects onto the simple representation $L(\la)$. The above argument gives us 
\[
\mathrm{res}_{d-1}^{d}(L_{d}(\la))\subset \bigoplus_{r\in I}L_{d-1}(\mathsf{E}_{r}(\la)).
\] 
In addition, by Theorem \ref{bases of simple modules}, we have that the cardinality of the basis of the simple representation $L_{d}(\la)$ is equal to the sum of the cardinalities of the bases for the simple representations $L_{d-1}(\mathsf{E}_{r}(\la))$, for all $r\in I$. Thus
\[
\mathrm{res}_{d-1}^{d}(L_{d}(\la)) = \bigoplus_{r\in I}L_{d-1}(\mathsf{E}_{r}(\la)).
\]
and we conclude that 
\[
\mathrm{res}_{d-1}^{d}(H_{i}(C_{\bullet}(\la))) = 
\begin{cases}
\mathrm{res}_{d-1}^{d}(L_{d}(\la)), \ \text{if} \ i = 0 \\
0, \hspace{2.15cm} \text{otherwise}
\end{cases}.
\]
Since $\mathrm{res}_{d-1}^{d}(L_{d}(\nu))\neq 0$, for any $\nu\trianglelefteq \la$, despite the fact that $r-\mathrm{res}_{d-1}^{d}(L_{d}(\nu)) = 0$, for some $r\in I$, we have that
\[
H_{i}(C_{\bullet}(\la)) = 
\begin{cases}
L_{d}(\la), \ \text{if} \ i = 0 \\
0, \hspace{0.8cm} \text{otherwise}.
\end{cases}
\]
and the proof is complete. \qedhere
\end{proof}

% \bibliographystyle{alpha} 
%\bibliography{master}

\end{document}